\documentclass[10pt,twoside, a4paper, english, reqno]{amsart}
\usepackage[dvips]{epsfig}
\usepackage{amscd}
\usepackage{stmaryrd}
\usepackage{amssymb}
\usepackage{amsthm}
\usepackage{amsmath}
\usepackage{graphicx,enumitem,dsfont}
\usepackage{latexsym}

\usepackage{upref}
\usepackage[colorlinks]{hyperref}
\usepackage{color}
\usepackage{subcaption}
\usepackage[usenames,dvipsnames]{xcolor}
\setlength{\topmargin}{-.5cm}
\setlength{\textheight}{23cm}
\setlength{\evensidemargin}{0cm}
\setlength{\oddsidemargin}{0cm}
\setlength{\textwidth}{16cm}
\theoremstyle{plain}
\newtheorem{thm}{Theorem}[section]
\theoremstyle{plain}
\newtheorem{lem}[thm]{Lemma}
\newtheorem{prop}[thm]{Proposition}
\newtheorem{cor}[thm]{Corollary}

\theoremstyle{definition}
\newtheorem{defi}{Definition}[section]
\newtheorem{rem}{Remark}[section]

\newtheorem*{maintheorem*}{Main Theorem}

\newenvironment{Assumptions}
{
\setcounter{enumi}{0}

\begin{enumerate}}
{\end{enumerate} }

\newenvironment{Assumptions2}
{
\setcounter{enumi}{0}

\begin{enumerate}}
{\end{enumerate} }

\newcommand{\R}{\ensuremath{\mathbb{R}}}

\newcommand{\goto}{\ensuremath{\rightarrow}}
\newcommand{\grad}{\ensuremath{\nabla}}
\newcommand{\eps}{\ensuremath{\varepsilon}}

\numberwithin{equation}{section} \allowdisplaybreaks

\title[Large Deviations Principle]
{Stochastic doubly nonlinear PDE: Large Deviation Principles and existence of Invariant measure}

\date{}
\iffalse
\author[ Kavin]{Kavin R}
\address[Kavin R] {\newline 
Department of Mathematics,
Indian Institute of Technology Delhi,
Hauz Khas, New Delhi, 110016, India.}
\email[] {maz198757@iitd.ac.in}
\fi

\author[A. K. Majee]{Ananta K. Majee}
\address[Ananta K. Majee]{\newline
Department of Mathematics,
Indian Institute of Technology Delhi,
Hauz Khas, New Delhi, 110016, India. }
\email[]{majee@maths.iitd.ac.in}

%\subjclass[2000]{45K05, 46S50, 49L20, 49L25, 91A23, 93E20}

\keywords{Large deviation principle; weak convergence method; Invariant measure; Girsanov transformation; Doubly nonlinear PDE.}

\thanks{}

\numberwithin{equation}{section} \allowdisplaybreaks
\begin{document}
\begin{abstract}
    In this paper, we establish large deviation principle for the strong solution of a doubly nonlinear PDE driven by small multiplicative Brownian noise. Motononicity arguments and  the weak convergence approach  have been exploited in the proof. Moreover, by using certain a-priori estimates and sequentially weakly Feller property of the associated Markov semigroup, we show existence of invariant probability measure for the strong solution of the underlying problem.

\end{abstract}
\maketitle
\section {Introduction}

Doubly nonlinear PDE appears in modeling the flows of incompressible turbulent fluids through porous media and the turbulent gas flowing in a pipe of uniform cross sectional area \cite{Diaz1994}, the phase transitions
in thermodynamics and also in  the Steafan-type problems, see \cite{Rossi2013,Visintin1997,Visintin2008} and references therein. It also arises in ferromagnetism and plasticity theory; cf.~ \cite{Visintin2008-elctro}.  Due to its wide range of applications in physical contexts as well as its technical novelties, well-posedness theory for doubly nonlinear PDE with nonlinear stochastic forcing is more subtle.  We are interested in the theory of large deviation principle, which concerns the asymptotic behaviour of remote tails of some sequence of probability distribution, for the solution of the following doubly nonlinear PDE driven by multiplicative Brownian noise: 
 \begin{equation}\label{eq:doubly-nonlinear}
 \begin{aligned}
  {\rm d} {\tt B}(u) -\mbox{div}_x {\tt A}(\grad u)\,{\rm d}t &=\sigma(u)\,dW(t)\quad \text{in}\,\,\Omega \times D_T, \\
  u&=0 \quad \text{on}\,\, \Omega \times \partial D_T\,, \\
  u(0,\cdot)&=u_0(\cdot)  \quad \text{in}\,\,\Omega \times  D\,,
  \end{aligned}
 \end{equation}
 where $D_T=(0,T)\times D,~\partial D_T=(0,T)\times \partial D$ for fixed $T>0$ and $W(t)$  is a one-dimensional Brownian motion defined on a given filtered probability space $(\Omega, \mathbb{P}, \mathcal{F}, \{ \mathcal{F}_t\}_{t \geq 0})$  satisfying the usual hypothesis. In \eqref{eq:doubly-nonlinear}, $\sigma: \R \mapsto \R$
 is a given noise coefficient signifying the multiplicative nature of the noise. The operator ${\tt B}$ is
 given by ${\tt B}(u)(x):={\tt b}(u(x))$ for some differentiable function ${\tt b}: \R \goto \R$ with ${\tt b}(0)=0$, and the nonlinear drift operator ${\tt A}$ is the Nemyckii type operator, defined as ${\tt A}(L)(x):={\tt a}(x, L(x))$ for some  Carath\'{e}odory function ${\tt a}:D\times \R^d \goto \R^d$ with $L: D\goto \R^d$ measurable function.
\vspace{0.1cm}

 Nonlinearity of drift and  diffusion terms in \eqref{eq:doubly-nonlinear} prevents us to define a semi-group solution. Instead, one may use the classical monotonicity methods \cite{Krylov2007,Liu2015,pard} to study wellposedness theory of  \eqref{eq:doubly-nonlinear}; see also \cite{Lions1969} for PDEs.
In the articles \cite{Majee2020,Vallet2019}, the authors have considered stochastic perturbation of a evolution $p$-Laplacian equation~($p>2$) with nonlinear source, which
can be viewed as of the form \eqref{eq:doubly-nonlinear} with ${\tt B}={\tt Id}$ and ${\tt A}=-\Delta_p$, the $p$- Laplacian operator~(monotone), but with extra first order term $-{\rm div}_x \vec{f}$ for
some Lipschitz continuous function $\vec{f}:\R \mapsto \R^d$.   By using  pseudo monotonicity methods and adapting the theorems of Skorokhod and Prokhorov, the authors, in  \cite{Vallet2019}, have shown the existence of a martingale solution.  Moreover,  Gy\"{o}ngy-Krylov characterization \cite{Krylov1996} of convergence in probability and path-wise uniqueness is used to show the  wellposedness of strong solution.  Based on pseudo monotonicity methods along with the Jakubowski-Skorokhod theorem \cite{Jakubowski1998} in a non-metric space, the author in \cite{Majee2020} prove existence of a martingale solution. The author also established existence of a weak optimal solution to the associated initial value control problem with L\'{e}vy noise. In a recent article  \cite{Wittbold2019}, the authors have established well-posedness theory for the strong solution of  \eqref{eq:doubly-nonlinear} by employing  semi-implicit time discretization scheme, monotonicity arguments, the theorems of Skorokhod and Prokhorov together with  Gy\"{o}ngy-Krylov characterization  of convergence in probability and path-wise uniqueness of martingale solution (which was obtained by the standard $L^1$-method).  Very recently,  in \cite{Majee2023} the author  considered an initial value control problem for a doubly nonlinear
PDE perturbed by multiplicative L\'{e}vy noise and proved the existence of a weak martingale
solution and its path-wise uniqueness. The proof of a weak martingale solution combines a semi-
implicit time discretization scheme and monotonicity method. The author derived a-priori estimates of the scheme and showed the tightness of the probability measures generated by the approximate solution of the scheme via Aldou's condition and then applied the Jakubowski-Skorokhod
theorem in a non-metric space to obtain the existence of a weak martingale solution. Moreover, using the variational approach, the author proved the existence of a weak optimal solution. The ingredients for the proof used a minimizing weak admissible solution and the Skorokhod’s theorem.

\vspace{0.2cm}

Theory of large deviation principle~(LDP in short)  plays an important role in the field of probability and statistics; see  e.g., \cite{varadha1,varadhan, freidlin, stroock, dembo, Ellis-1985, Stroock-1989,kurtz} and reference therein. During the last decade there has been growing interested in the study of LDP for stochastic partial differential equations~(SPDEs) and new results are emerging faster than ever before. A number of authors have contributed since then, and we
mention only few  see e.g., \cite{liu, dong1, dong2, matous, rock, ren} and references therein, based on either weak convergence method \cite{dupuis, budhi} or 
exponential equivalence method.  In \cite{rock}, the authors established LDP for the solution of the stochastic tamed Navier-Stokes equations via 
exponentially equivalent technique. In \cite{ren}, Ren and Zhang have studied Freidlin-Wentzell's large deviation for stochastic evolution equation in the evolution triple. LDP  for stochastic evolution equation involving strongly monotone drift  was studied by Liu in \cite{liu}. The study of LDP was carried out for 3D stochastic primitive equation by Dong, Zhai and Zhang in \cite{dong1}, stochastic scalar conservation laws by Dong, Wu and Zhang in \cite{dong2}, Obstacle problems of quasi-linear SPDEs by Matoussi, Sabbagh and Zhang in \cite{matous} via weak convergence method. Very recently, the authors in \cite{Kavin-Majee-2022}
established large deviation principle for the strong solution of  evolutionary nonlinear perturbation ${\rm div}_xf(u)$ of  $p$-Laplace equation driven by small multiplicative Brownian noise. By employing semi-discrete time discretization together with a-priori estimation on some appropriate fractional Sobolev space, the authors able to establish LDP via weak convergence method. Also in the case of bounded diffusion coefficient, based on Girsanov transformation along with $L^1$-contraction approach, the authors derived  the quadratic transportation cost inequality---an important area of research as it has close connections with well-known functional inequalities, e.g., poincar\'{e} inequalities, logarithmic Sobolev inequalities--- for the strong solution of the underlying problem.  
 \vspace{0.15cm}

In this paper, we wish to study the LDP and  to show existence of invariant probability measure for the strong solution of \eqref{eq:doubly-nonlinear}. Due to  the doubly nonlinear nature of the underlying problem, approach of  \cite{liu}, to show LDP, is not applicable. Instead, we follow the  similar line of approach invoked in  \cite{Kavin-Majee-2022} to settle LDP for the solution of  \eqref{eq:doubly-nonlinear} on the Polish space $\mathcal{Z} = C([0,T];L^{2}(D))$.  The aim of this paper is twofold:
\begin{itemize}
 \item[i)] Firstly, we  prove that the solution $u^\eps$ of small multiplicative noise driven SPDE \eqref{eq:ldp} satisfies the LDP on the solution space $\mathcal{Z} = C([0,T];L^{2}(D))$ via the weak convergence method. 
 \begin{itemize}
 \item[{\rm a)}] We first prove the well-posedness result of the corresponding skeleton equation~(cf.~\eqref{eq:skeleton}).  We construct  an approximate solutions $\{ \tilde{\tt B}_\tau\}$~(cf.~Definition \ref{defi:approximate-skeleton}) via semi-implicit time discretization, and derive its a-priori bounds which plays a key role to show the strong convergence of $\{\tilde{\tt B}_\tau\}$ and hence of 
  $\{{\tt B}(u_\tau)\}$. Moreover, a Minty type monotonicity arguments yields the existence of weak solution of the Skeleton equation. Furthermore, we use smooth approximation of absolute value function and chain rule together with the monotonicity argument and pass to the limit as approximation parameter tends to zero to show the path-wise uniqueness of weak solutions via standard $L^1$-method. 
 \item[{\rm b)}] Due to Budhiraja and Dupuis in \cite[Theorem 4.4]{budhi}, to prove LDP for $u^\eps$, it suffices to validate the conditions \ref{C1} and \ref{C2}. For the validation of condition \ref{C2}, thanks to weakly compactness of the set $S_M$~(cf.~\eqref{defi:sets-for-ldp}), it is enough to show strong convergence of $u_n$ to $u_h$ where $u_n$ and $u_h$ are the solution of \eqref{eq:skeleton} corresponding to $h_n$ and $h$ respectively with $h_n \goto h$ weakly in $L^2(0,T; \R)$. 
 By using uniform bounds for ${\tt B}(u_n)$ (see \eqref{eq:ub}, \eqref{esti:frac-Sov-sequence} ), boundedness property of $h_n$ along with  Arzel$\acute{a}$–Ascoli theorem, we show strong convergence of $B(u_n)$ and hence strong convergence  of $u_n$ to some function $\bar{u}_*$ in some appropriate space ---which then yields thanks to monotonicity arguments and uniqueness of solution of the Skeleton equation that $u_n\goto \bar{u}_*=u_h$ in $\mathcal{Z}$. 
 \item[{\rm c)}] Thanks to Girsanov's theorem and uniqueness result for the equation \eqref{eq:ldp}, to validate condition \ref{C1}, we basically show convergence of $v^\eps$ to $u_h$ in distribution on $\mathcal{Z}$ where $v^\eps$ is the unique solution of \eqref{eq:epsilon-main}.  We use similar arguments as mentioned in 
 ${\rm b)}$ but due to its stochastic setup, more regularity estimate of $v^\eps$ is needed ~(cf.~Lemma \ref{lem:est2.2}). We first derive necessary uniform estimates
 (Lemmas \ref{lem:est2.1}-\ref{lem:cpt2}) which is used to show tightness of the sequence of laws of $\{{\tt B}(v^\eps)\}$ and then use Prokhorov compactness theorem and the modified version of Skorokhod representation theorem to get a.s. convergence of ${\tt B}(\overline{v}^\eps)$ to ${\tt B}(\overline{v}^*)$ with respect to a new probability space $(\overline{\Omega},\overline{\mathcal{F}},\overline{\mathbb{P}})$. Moreover, using chain rule on ${\tt B}(\overline{v}^*)$ and Ito-formula on $\|{\tt B}(\overline{v}^\eps)$ and a Minty type monotonicity argument together with higher regularity of $v^\eps$ (and hence $\overline{v}^\eps$)---which also required for passing the limit in the controlled drift term in particular to get \eqref{conv:weak-c1-drift-1}---we show that $\overline{v}^*$ is indeed a solution of Skeleton equation \eqref{eq:skeleton}. Thanks to uniqueness of Skeleton equation and equality of laws of $\overline{v}^\eps$ and $v^\eps$, we show convergence of the expected value of $\| {\tt B}(\overline{v}^\eps)-{\tt B}(\overline{v}^*)\|_{\mathcal{Z}}$ as $\eps$ approaches to zero--which then implies the assertion \ref{C1} via Markov inequality. 
 \end{itemize}

\item[ii)] Secondly, we prove the existence of invariant measure for the strong solution of \eqref{eq:doubly-nonlinear} by invoking the technique of Maslowski and Seidler \cite{Seidler1999} working with weak topology. We show two conditions of Theorem \ref{thm:M-S}, ${\rm i^\prime)}$ boundedness in probability and ${\rm ii^\prime)}$ sequentially weakly Feller property, are satisfied by the associated Markov semigroup $(P_t)_{t\ge 0}$, defined by \eqref{defi:semi-group}, to establish the existence of invariant measure. 
\end{itemize}
\vspace{0.2cm}

The rest of the paper is organized as follows. We discuss the assumption, basic definition, framework and main results in Section \ref{sec:preliminaries}. In Section \ref{sec:sk}, we show well-posedness result of Skeleton equation.  Section \ref{sec:LDP} is devoted to prove the LDP for the strong solution of \eqref{eq:doubly-nonlinear} . In the final section \ref{sec:invariant-measure}, we prove existence of invariant measure for associated semigroup for the strong solution of \eqref{eq:doubly-nonlinear}. 

%%%%%%%%%%%%%%%%%%%%%%%%%%%%%%%%%%%%%%%%%%%%%%%%%%%%%%%%%%%%%%%%%%
\section{Preliminaries And Technical Framework} \label{sec:preliminaries}
 By $C$, $K$, etc., we mean various constants which may change from line to line. Here and in the sequel, for  any $r\in \mathbb{N}$, we denote by $(L^r, \|\cdot\|_{L^r})$ the standard spaces
 of $r$th order integrable functions on $D$.  For $p\in \mathbb{N}$, we write $(W_0^{1,p}, \|\cdot\|_{W_0^{1,p}})$ for the standard Sobolev spaces on $D$, and $W^{-1,p^\prime}$ for the 
 dual space of  $W_0^{1,p}$, where $p^\prime$ denotes the convex conjugate of $p$. Moreover, we use the notation $\big\langle \cdot,\cdot\big\rangle$ for 
 the pairing between $W_0^{1,p}$ and $W^{-1,p^\prime}$, and $ x\wedge y$ by $ \min\{x,y\}$ for any $x,y\in \R$. We denote by $(\mathbb{Y},w)$ the topological space $\mathbb{Y}$ equipped with the weak topology.
 \vspace{.1cm}
 
 We now define notion of weak and strong solution of \eqref{eq:doubly-nonlinear}.
 
 \begin{defi}[Weak solution]\label{defi:weak-solun}
  Let the deterministic initial datum  $u_0 \in L^2$ be given.
  We say that  a $6$-tuple $\bar{\pi}=\big( \bar{\Omega}, \bar{\mathcal{F}}, \bar{\mathbb{P}}, \{\bar{\mathcal{F}}_t\},  
 \bar{W}, \bar{u}\big)$ is a weak solution of \eqref{eq:doubly-nonlinear}, provided
 \begin{itemize}
  \item [(i)] $(\bar{\Omega}, \bar{\mathcal{F}},\bar{\mathbb{P}}, \{\bar{\mathcal{F}}_t\}_{t\ge 0} )$ is a complete stochastic basis, and 
  $\bar{W}$ is a one-dimensional Brownian motion defined on this stochastic basis.
 
  \item[(ii)] $\bar{u}: \bar{\Omega}\times [0,T]\rightarrow L^2$ is an $\{\bar{\mathcal{F}}_t\}$-predictable stochastic process such that 
  \begin{itemize}
   \item [(a)] $\bar{u}\in L^p\big(0,T; W_0^{1,p}\big)\cap L^\infty\big(0,T; L^2\big)$, and $\bar{\mathbb{P}}$-a.s., $\bar{u}(0)=u_0$ in $L^2$, 
   \item[(b)]  $\bar{\mathbb{P}}$-a.s., ${\tt B}(\bar{u}) \in L^\infty(0,T;L^2)\cap  \mathcal{C}([0,T]; W^{-1,p^\prime}) \subset  \mathcal{C}([0,T]; L^2_w)$,
   \item[(d)] $\bar{\mathbb{P}}$-a.s., and for all $t\in [0,T]$, 
   \begin{align*}
   {\tt B}(\bar{u}(t))={\tt B}( u_0 ) + \int_{0}^t \mbox{div}_x {\tt A}(\nabla \bar{u})\,{\rm d}s + 
   \int_0^t\sigma(\bar{u}(s))\,d\bar{W}(s)  \quad \text{in}~~ W^{-1,p^\prime}\,. 
  \end{align*}
  \end{itemize}
 \end{itemize}
 \end{defi}
In view of  the property ${\rm (b)}$, $\bar{\mathbb{P}}$-a.s., $B(\bar{u}(t))\in L^2$ and hence $\bar{u}(t)= B^{-1} B(\bar{u}(t))\in L^2$ for all $t\in [0,T]$.
\begin{defi}[Strong solution]
Let  $W(t)$  be a one-dimensional Brownian motion defined on a given filtered probability space $(\Omega, \mathbb{P}, \mathcal{F}, \{ \mathcal{F}_t\}_{t \geq 0})$  satisfying the usual hypothesis. We say that a predictable process $u: \Omega \times [0,T]\goto L^2$ is a strong solution of  \eqref{eq:doubly-nonlinear} if and only if 
conditions ${\rm (a)}$-${\rm (c)}$ of Definition \ref{defi:weak-solun} hold true with respect to the given stochastic basis. 
\end{defi}

The aim of this paper is to establish LDP and existence of invariant measure for the strong solution of \eqref{eq:doubly-nonlinear}, and we will do so under the following assumptions: 
\begin{Assumptions}
\item \label{A2}  ${\tt a}: D \times \R^d \goto \R^d$ is a Carath\'{e}odory function satisfying the following:
\begin{itemize}
\item [(i)] ${\tt a}$ is monotone in the second argument, i.e., $$
\big({\tt a}(x,\xi_1)- {\tt a}(x, \xi_2) \big)\cdot (\xi_1- \xi_2) \ge 0 \quad \text{
for a.e. $x\in D$ and for all $\xi_1, \xi_2 \in \R^d$}.$$
\item [(ii)] ${\tt a}$ is coercive. Namely, there exist $C_1>0$ and a function $K_1 \in L^1$ such that 
$$ {\tt a} (x, \xi) \cdot \xi \ge C_1 |\xi|^p - K_1(x) \quad \text{  for $\xi \in \R^d$ and  a.e. $x\in D$ with $p>2$}. $$
\item [(iii)] ${\tt a}$ is bounded. In particular, there exist $C_2>0$ and a function $K_2 \in L^{p^\prime}$ such that 
\begin{align*}
|{\tt a}(x, \xi)| \le C_2 |\xi|^{p-1} + K_2(x) \quad \text{ for a.e. $x\in D$ and for all $\xi \in \R^d$}\,.
\end{align*}
\end{itemize}
 
\item \label{A3} ${\tt b}: \R \goto \R$ is a differentiable function such that ${\tt b}(0)=0$. Moreover, there exist constants $C_3, C_4, C_{{\tt b}^\prime} >0$ such that 
\begin{align*}
 & |{\tt b}^\prime(\zeta_1)-{\tt b}^\prime(\zeta_2)| \le C_{{\tt b}^\prime}|\zeta_1 - \zeta_2| \quad \forall\, \zeta_1, \zeta_2 \in \R\,, \\
& C_3 \le {\tt b}^\prime(\zeta) \le C_4  \quad \forall\, \zeta \in \R\,. 
\end{align*}
\item \label{A4} $\sigma(0)=0$ and $\sigma$ is Lipschitz i.e., there exists a constant $c_\sigma>0$ such that
 \begin{align*}
  | \sigma(\zeta_1)-\sigma(\zeta_2)|  & \leq  c_\sigma |\zeta_1-\zeta_2| \quad \forall~ \zeta_1, \zeta_2 \in \R\,. 
   \end{align*}
\end{Assumptions}
In view of the assumptions \ref{A2}-\ref{A3}, the operators ${\tt A}$ and ${\tt B}$  satisfy the following conditions; see \cite[Remark $2.1.1 \&$ Subsection $2.3$ ]{Wittbold2019} and \cite[Remark $2.1$]{Majee2023}.

\begin{itemize}
 \item [(I)] The operator ${\tt A}: (L^p)^d\goto (L^{p^\prime})^d$, defined as 
${\tt A}(v)(x)= {\tt a}(x, v(x))$ for any $v\in (L^p)^d$, is continuous, monotone, coercive and bounded. In particular, for all $v, v_1, v_2 \in (L^p)^d$
\begin{align}
& \big( {\tt A}v_1 - {\tt A} v_2, v_1 - v_2 \big)_{ \big((L^{p^\prime})^d, (L^p)^d\big)} \ge 0\,, \label{A:monotonocity} \\
& \big( {\tt A}v , v \big)_{\big((L^{p^\prime})^d, (L^p)^d\big)} \ge C_1 \|v\|_{(L^p)^d}^p - \|K_1\|_{L^1} \,, \label{A:coercivity} \\
& \|{\tt A} v\|_{(L^{p^\prime})^d} \le C_2 \|v\|_{(L^p)^d}^{p-1} + \|K_2\|_{L^{p^\prime}}\,.  \label{A:boundedness}
\end{align}
\item [(II)] ${\tt b}$ is strictly monotone and coercive. The inverse function ${\tt b}^{-1}$ is differentiable and satisfies
$$ \frac{1}{C_4} \le ({\tt b}^{-1})^\prime \le \frac{1}{C_3}.$$
The operator ${\tt B}: L^2 \goto L^2$, defines as ${\tt B} (w)(x)= {\tt b}(w(x))$ for any $w\in L^2$, is strongly monotone and Lipschitz continuous. Specifically, 
\begin{align}
 \|{\tt B}w_1 - {\tt B} w_2\|_{L^2}  & \le C_4 \|w_1- w_2\|_{L^2}\,. \label{esti:B-Lipschitz}
 \end{align}
 For any $w\in W_0^{1,p}$, ${\tt B}(w)\in W_0^{1,p}$ and satisfies 
\begin{align}
\|\nabla {\tt B}(w)\|_{L^p}=\|{\tt b}^\prime(w) \nabla w\|_{L^p} \le C_4 \|\nabla w\|_{L^p}\,. \label{esti:operator-B-grad}
\end{align} 
\item[(III)]Thanks to Zarantonello's theorem \cite[Theorem $25. B$]{zeidler1990}, the inverse operator ${\tt B}^{-1}: L^2 \goto L^2$ exists and  Lipschitz continuous. In particular, for any $u, w\in L^2$ and a.e. $x\in D$,  one has
\begin{align}
 & \|w\|_{L^2}  \le \frac{1}{C_3} \|{\tt B} w\|_{L^2}\,,\label{B:reverse-L2} \\
  & \big|u(x)-w(x)| \le \frac{1}{C_3}\big| {\tt B}(u)(x)-{\tt B}(w)(x)|\,.\label{B:reverse-poinwise}
\end{align}
\end{itemize}

Under the assumptions \ref{A2}-\ref{A4} and for deterministic $u_0\in L^2$, the equation \eqref{eq:doubly-nonlinear} has a unique strong solution $u\in L^2(\Omega; C([0,T];W^{-1,p'})\cap L^{p}([0,T];W_{0}^{1,p}))$ satisfying the following uniform bounds; cf.~ \cite[Theorem $3.5$]{Wittbold2019}:
\begin{equation}\label{esti:weak-solun}
\begin{aligned}
 \mathbb{E}\Big[\sup_{0\le t\le T} \|{\tt B}(u(t))\|_{L^2}^2 + \int_0^T \| {\tt B}(u(t))\|_{W_0^{1,p}}^p\,{\rm d}t \Big] \le C\,,  \\
  \mathbb{E}\Big[ \int_{D_T} \Big( |{\tt A}(\nabla u(t))|^{p^\prime} + |\nabla u(t)|^p\Big)\,{\rm d}x\,{\rm d}t\Big] \le C\,.
\end{aligned}
\end{equation}
 We can write down the strong solution of \eqref{eq:doubly-nonlinear} in the form
\begin{align*}
{\tt B}(u(t))= {\tt B}(u_0) + \int_0^t g(s)\,ds + M(t)\,.
\end{align*}
In view of the estimate \eqref{esti:weak-solun}, we see that
\begin{itemize}
\item[i)] $u\in L^p(\Omega\times(0,T);W_0^{1,p})$,
\item[ii)] $g\in L^{p^\prime}(\Omega\times (0,T); W^{-1, p^\prime})$,
\item[iii)] $M\in L^2(\Omega\times [0,T];L^2)$.
\end{itemize}
Hence by \cite[Lemma $4.1$]{Pardoux1974-75}, we conclude that $u\in L^2(\Omega; C([0,T];L^2))$. Thus,  the path-wise unique solution $u$ of  \eqref{eq:doubly-nonlinear}  has improved regularity property namely $u\in L^2(\Omega;  C([0,T];L^{2})\cap L^{p}([0,T];W_{0}^{1,p}))$. Moreover, for $\mathbb{P}$-a.s $\omega \in \Omega$, the trajectory $u(\cdot, \omega)$ is almost everywhere equal to a continuous $L^2$-valued function defined on $[0,\infty)$. 
 
%%%%%%%%%%%%%%%%%%%%%%%%%%%%%%%%%%%%%%%%%%%%%%%%%%%%%%

\subsection{Large deviation principle} 
Let $\mathcal{Z}$ be a Polish space and $\{X^\eps\}_{\eps > 0}$  be a sequence of $\mathcal{Z}$-valued random variables defined on a given probability space $(\Omega, \mathcal{F}, \mathbb{P})$. 

 \begin{defi}[Rate Function]
      A function $\mathcal{I} : \mathcal{Z}$ $\rightarrow$ $[0,\infty]$ is said to be a rate function if $\mathcal{I}$ is a lower semi-continuous function. It is said to be a good rate function if the level set $$\{x\in\mathcal{Z} : ~\mathcal{I}(x) \leq M \}$$ is compact for each $M<\infty$.
\end{defi}
\begin{defi} [Large deviation principle]
The sequence  $\{X^\eps\}_{\eps> 0}$  is said to satisfy the LDP with rate function $\mathcal{I}$ if for each $ A \in \mathcal{B}(\mathcal{Z})$, the Borel $\sigma$-algebra of $\mathcal{Z}$, the following inequality holds:
\begin{equation*}
    \begin{aligned}
        \underset{x\in A^0}{-\inf} \,\mathcal{I}(x) \leq \underset{\eps \rightarrow 0}{\liminf} \eps^{2}\log\mathbb{P}(X^\eps \in A) & \leq    \underset{\eps\rightarrow 0}{\limsup} \eps^{2} \log \mathbb{P}(X^\eps \in A)
        \leq  \underset{x\in \bar{A}} {-\inf}\,\mathcal{I}(x)\,,
    \end{aligned}
\end{equation*}
where $A^0$ and $\bar{A}$ are interior and closure of $A$ in $\mathcal{Z}$ respectively. 
\end{defi}
Consider the following sets:
\begin{equation}\label{defi:sets-for-ldp}
\begin{aligned} 
      &\mathcal{A} := \Big\{ \phi :  \text{ $\phi$  is a $\{\mathcal{F}_t\}$-predictable process with } 
       \int_0^T|\phi(s)|_{\mathbb{R}}^2ds <  \infty, ~~ \mathbb{P}\text{-a.s}.\Big\}, \\
      &S_M := \Big\{h \in L^2([0,T]; \mathbb{R}) : ~\int_0^T|h(s)|_{\mathbb{R}}^2\,ds\leq M\Big\}, \\
    &\mathcal{A}_M :=\Big\{\phi \in \mathcal{A} :~ \phi(\omega) \in S_M,  \mathbb{P} \text{-a.s.}\Big\}. 
\end{aligned}
\end{equation}
For each  $\eps > 0$,  suppose there exist measurable maps $\mathcal{G^{\eps}}$ : C([0,T], $\mathbb{R}$) $\rightarrow$ $\mathcal{Z}$ and $\mathcal{G}^{0}$ : C([0,T], $\mathbb{R}$) $\rightarrow$ $\mathcal{Z}$ such that $X^\eps=\mathcal{G}^{\eps}(W)$ for a given one dimensional Brownian motion on the stochastic basis
$(\Omega, \mathcal{F}, \mathbb{P}, \{\mathcal{F}_t\}_{t\ge 0})$.  Then, due to Budhiraja and Dupuis \cite[Theorem 4.4]{budhi},  we state a sufficient conditions for $X^\eps$ to satisfy LDP on $\mathcal{Z}$.
\begin{thm}\label{thm:ldp-general-sufficient}
$X^\eps$ satisfies the LDP on $\mathcal{Z}$ with the good rate function $\mathcal{I}$ given by
 \begin{align} \label{eq:Imain}
  \mathcal{I}(f)= \begin{cases}
\displaystyle  \underset{\big\{h \in L^2([0,T]; \mathbb{R}) :~ f =  \mathcal{G}^{0} \big(\int_0^{\cdot}h(s)ds\big)\big\}} \inf  \bigg \{ \frac{1}{2} \int_0^T|h(s)|_{\mathbb{R}}^2\,ds \bigg \}, \quad \forall f \in \mathcal{Z}\,,
 \\
\displaystyle   \infty, \quad  \text{if} \quad \bigg \{h \in L^2([0,T]; \mathbb{R}) : f = \mathcal{G}^0 \bigg( \int_0^{.}h(s)ds \bigg) \bigg \} = \emptyset\,,
\end{cases}
 \end{align}
 provided  $\mathcal{G}^\eps$ and $\mathcal{G}^0$ satisfies the following sufficient conditions:
\begin{Assumptions2}
\item \label{C1}  Let \{$h^\eps:~\eps> 0\} \subset \mathcal{A}_M$, for some $M < \infty$, be a family of process converges to $h$ in distribution as $S_{M}$-valued random elements.  Then  
\begin{align*}
    \mathcal{G}^{\eps} \bigg( W(\cdot) + \frac{1}{{\sqrt{\eps}}} \int_0^{\cdot} h^{\eps}(s)\,ds\bigg) \rightarrow  \mathcal{G}^0 \bigg( \int_0^{\cdot}h(s) \, ds \bigg)
\end{align*}
in distribution as $\eps \rightarrow 0$.
\item \label{C2} For every M $<$ $\infty$, the set
\begin{align*} 
    K_{M} = \left \{\mathcal{G}^0\bigg(\int_0^{.}h(s)\,ds \bigg) : h \in S_{M} \  \right \}
\end{align*}
is a compact subset of $\mathcal{Z}$.
\end{Assumptions2}
\end{thm}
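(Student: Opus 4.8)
The assertion is precisely the abstract sufficiency criterion of Budhiraja and Dupuis, so the plan is to reproduce the architecture of the proof of \cite[Theorem~4.4]{budhi}. Since the candidate functional $\mathcal I$ in \eqref{eq:Imain} turns out to be a good rate function --- this is exactly what \ref{C2} supplies, as explained at the end --- the large deviation principle is equivalent to the Laplace principle, and it therefore suffices to show that for every bounded continuous $F:\mathcal Z\to\R$,
\begin{equation*}
\lim_{\eps\to 0}\Big(-\eps^{2}\log\mathbb E\big[e^{-F(X^{\eps})/\eps^{2}}\big]\Big)=\inf_{f\in\mathcal Z}\big\{F(f)+\mathcal I(f)\big\}.
\end{equation*}
The engine is the Bou\'e--Dupuis variational representation for exponential functionals of the driving Brownian motion, which in the scaling adopted here, using $X^{\eps}=\mathcal G^{\eps}(W)$ and the boundedness of $F$, yields an identity of the form
\begin{equation*}
-\eps^{2}\log\mathbb E\big[e^{-F(X^{\eps})/\eps^{2}}\big]=\inf_{\phi\in\mathcal A}\mathbb E\Big[\tfrac12\int_0^T|\phi(s)|_{\R}^{2}\,ds+F\Big(\mathcal G^{\eps}\big(W+\tfrac1{\sqrt\eps}\textstyle\int_0^{\cdot}\phi(s)\,ds\big)\Big)\Big].
\end{equation*}

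For the Laplace upper bound $\limsup_{\eps\to0}\big(-\eps^{2}\log\mathbb E[e^{-F(X^{\eps})/\eps^{2}}]\big)\le\inf_{f}\{F(f)+\mathcal I(f)\}$ I would fix $f\in\mathcal Z$ with $\mathcal I(f)<\infty$, fix $\delta>0$, and pick $h\in L^{2}([0,T];\R)$ with $f=\mathcal G^{0}(\int_0^{\cdot}h\,ds)$ and $\tfrac12\int_0^T|h|_{\R}^{2}\,ds\le\mathcal I(f)+\delta$. Plugging the deterministic control $\phi\equiv h\in\mathcal A_M$ into the representation bounds the left-hand side above by $\tfrac12\int_0^T|h|_{\R}^2\,ds+\mathbb E[F(\mathcal G^{\eps}(W+\tfrac1{\sqrt\eps}\int_0^{\cdot}h\,ds))]$. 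The constant family $h^{\eps}\equiv h$ trivially converges to $h$ in distribution, so \ref{C1} gives $\mathcal G^{\eps}(W+\tfrac1{\sqrt\eps}\int_0^{\cdot}h\,ds)\to\mathcal G^{0}(\int_0^{\cdot}h\,ds)=f$ in distribution, and bounded continuity of $F$ makes the last term converge to $F(f)$. Letting $\delta\downarrow0$ and then taking the infimum over $f$ closes this bound.

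For the Laplace lower bound $\liminf_{\eps\to0}\big(-\eps^{2}\log\mathbb E[e^{-F(X^{\eps})/\eps^{2}}]\big)\ge\inf_{f}\{F(f)+\mathcal I(f)\}$ I would select, for each $\eps$, an $\eps$-near-minimizing control $\phi^{\eps}\in\mathcal A$ in the representation. Boundedness of $F$ forces $\sup_{\eps}\mathbb E[\tfrac12\int_0^T|\phi^{\eps}|_{\R}^{2}\,ds]<\infty$, so after the standard localization argument that replaces $\phi^{\eps}$ by a process in $\mathcal A_M$ --- correct up to an error that vanishes as $M\to\infty$, uniformly in $\eps$ --- one may assume $\phi^{\eps}\in\mathcal A_M$. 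Since $S_M$ carrying the weak $L^{2}$-topology is a compact metrizable space, the laws of $(\phi^{\eps},W)$ are tight; Prokhorov's theorem together with the Skorokhod representation theorem produce a subsequence and versions on a new probability space along which $(\phi^{\eps},W)\to(h,W)$ almost surely with $h\in S_M$. Invoking \ref{C1} with the random family $h^{\eps}=\phi^{\eps}$, the continuity of $F$, the weak lower semicontinuity of $\phi\mapsto\int_0^T|\phi|_{\R}^{2}\,ds$, and Fatou's lemma then gives
\begin{equation*}
\liminf_{\eps\to0}\Big(-\eps^{2}\log\mathbb E\big[e^{-F(X^{\eps})/\eps^{2}}\big]\Big)\ge\mathbb E\Big[\tfrac12\int_0^T|h|_{\R}^{2}\,ds+F\big(\mathcal G^{0}(\textstyle\int_0^{\cdot}h\,ds)\big)\Big]\ge\inf_{f\in\mathcal Z}\big\{F(f)+\mathcal I(f)\big\},
\end{equation*}
where the last step uses $\mathcal I\big(\mathcal G^{0}(\int_0^{\cdot}h\,ds)\big)\le\tfrac12\int_0^T|h|_{\R}^{2}\,ds$ pathwise.

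It remains to see that $\mathcal I$ is a good rate function. Lower semicontinuity follows by a routine weak-compactness argument: if $f_n\to f$ in $\mathcal Z$ with $\sup_n\mathcal I(f_n)\le M$, choose $h_n\in S_{M+1}$ with $f_n=\mathcal G^{0}(\int_0^{\cdot}h_n\,ds)$, extract $h_n\rightharpoonup h$ weakly in $L^{2}$, use the weak-to-strong continuity of $h\mapsto\mathcal G^{0}(\int_0^{\cdot}h\,ds)$ from $(S_{M+1},w)$ into $\mathcal Z$ that underlies \ref{C2} to identify $f=\mathcal G^{0}(\int_0^{\cdot}h\,ds)$, and conclude $\mathcal I(f)\le\tfrac12\int_0^T|h|_{\R}^{2}\,ds\le\liminf_n\mathcal I(f_n)$ by weak lower semicontinuity of the $L^{2}$-norm; moreover the infimum defining $\mathcal I$ is attained when finite (again by weak compactness of $S_{2M}$ and the same continuity), so $\{\mathcal I\le M\}$ is a closed subset of the compact set $K_{2M}$ of \ref{C2}, hence compact. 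I expect the genuine work to be concentrated in the localization step of the lower bound --- producing, from an arbitrary admissible control of bounded expected energy, a nearby $\mathcal A_M$-valued control while quantitatively controlling the change in $\mathbb E[F(\mathcal G^{\eps}(\cdots))]$ uniformly in $\eps$ --- together with the tightness and measurability bookkeeping needed to apply the Skorokhod representation; everything else is soft functional analysis once \ref{C1} and \ref{C2} are in hand.
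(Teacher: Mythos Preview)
The paper does not actually prove this theorem; it is stated there as a known abstract result and attributed directly to Budhiraja and Dupuis \cite[Theorem~4.4]{budhi}, with no argument supplied. Your proposal correctly reconstructs the standard architecture of that proof --- the Bou\'e--Dupuis variational representation, the Laplace upper bound via a deterministic control and \ref{C1}, the Laplace lower bound via near-optimal controls localized to $\mathcal A_M$, tightness/Skorokhod, and \ref{C1} again, together with the goodness of $\mathcal I$ from \ref{C2} --- so there is nothing to compare against beyond the cited reference, whose method you have faithfully outlined.
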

%%%%%%%%%%%%%%%%%%%%%%%%%%%%%%%%%%%%%%%%%%%%%%%%%%  
 For fixed $\eps>0$,  consider the SPDE driven by small multiplicative Brownian noise:
 \begin{equation}
 \label{eq:ldp}
      \begin{aligned}
            d {\tt B}(u^{\eps})  - {\rm div}_x {\tt A}(\nabla u^{\eps}) \, dt &= \sqrt{\eps}\, \sigma(u^{\eps})\, dW(t) \quad  &in& \ \ \Omega \times D_T, \\
            u^{\eps}(0,.) &= u_0(.) \in L^2 &in& \ \ \Omega \times D\,.
        \end{aligned}
\end{equation}

Thanks to  \cite[Theorem $3.5$]{Wittbold2019} and \cite[Lemma $4.1$]{Pardoux1974-75}, under the assumptions \ref{A2}-\ref{A4}, equation \eqref{eq:ldp} has a path-wise unique strong solution $u^{\eps}\in L^2(\Omega;  C([0,T];L^{2})\cap L^{p}([0,T];W_{0}^{1,p}))$. Note that $\mathcal{Z}: = C([0,T];L^{2})$ is a Polish space with respect to its usual metric. Thanks to the infinite dimensional version of Yamada-Watanabe theorem in \cite{Zhang-2008}, 
 there exists a Borel-measurable function $\mathcal{G^{\eps}} : C([0,T];\mathbb{R}) \rightarrow \mathcal{Z}$ such that $u^{\eps}$ := $\mathcal{G}^{\eps}(W)$ a.s.
 \vspace{0.1cm}
 
For $h \in L^2([0,T],\mathbb{R})$,  let  $u_h \in C([0,T];L^{2})\cap L^{p}([0,T];W_{0}^{1,p})$ be the unique weak solution~(see Section
\ref{sec:sk} for its well-posedness results) of the deterministic Skeleton equation 
\begin{equation}
\label{eq:skeleton}
   \begin{aligned}
            d {\tt B}(u_{h})  - {\rm div}_x {\tt A} (\nabla u_{h}) \, dt &= \sigma(u_{h})\,h(t)\,dt, \\
            u_{h}(0,.) &= u_0(.). 
        \end{aligned}
\end{equation}
 Define a measurable function $\mathcal{G}^{0} : C([0,T],\mathbb{R}) \rightarrow  \mathcal{Z}$ by
\begin{equation}
    \mathcal{G}^{0}(\phi) := \begin{cases}
\displaystyle   u_{h} \quad  \text{if $\phi=\int_{0}^{\cdot} h(s) \,ds$  for some $ h \in L^2([0,T],\mathbb{R})$},\\
     0 \quad \text{otherwise}.
    \end{cases}
\end{equation}
Now we state one of the main result of this article.
 \begin{thm} \label{thm:main-ldp}
  Let $p > 2$ and $u_0$ be the given deterministic function. Under the assumptions \ref{A2} - \ref{A4}, $u^{\eps}$ satisfies the LDP on $\mathcal{Z} = C([0,T];L^{2})$ with the good rate function $\mathcal{I}$ given by \eqref{eq:Imain}, where $u^{\eps}$ is the unique solution of \eqref{eq:ldp}.
\end{thm}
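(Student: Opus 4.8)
The plan is to derive Theorem~\ref{thm:main-ldp} from the weak-convergence criterion of Budhiraja and Dupuis recorded in Theorem~\ref{thm:ldp-general-sufficient}. Since $u^{\eps}=\mathcal{G}^{\eps}(W)$ and $\mathcal{G}^{0}$ is (by the well-posedness theory of Section~\ref{sec:sk}) the solution operator of the skeleton equation \eqref{eq:skeleton}, with rate function \eqref{eq:Imain}, it suffices to verify the two conditions \ref{C1} and \ref{C2}.

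\emph{Verification of \ref{C2}.} Fix $M<\infty$ and let $h_n\to h$ weakly in $L^2(0,T;\R)$ with $h_n\in S_M$; note that $S_M$, being a bounded closed convex subset of a Hilbert space, is metrizable and sequentially compact in the weak topology. Write $u_n:=u_{h_n}$. From the a priori analysis of Section~\ref{sec:sk} one has bounds, uniform in $n$, for $\{{\tt B}(u_n)\}$ in $L^\infty(0,T;L^2)\cap L^p(0,T;W_0^{1,p})$, for $\{u_n\}$ in $L^p(0,T;W_0^{1,p})$, for $\{{\tt A}(\nabla u_n)\}$ in $L^{p^\prime}(0,T;(L^{p^\prime})^d)$, and --- since $\partial_t{\tt B}(u_n)=\Div_x{\tt A}(\nabla u_n)+\sigma(u_n)h_n$ has right-hand side bounded in $L^{p^\prime}(0,T;W^{-1,p^\prime})+L^2(0,T;L^2)$ (the drift controlled by $|h_n|\,\|u_n\|_{L^2}$ with $h_n\in S_M$) --- for $\{{\tt B}(u_n)\}$ in a fractional Sobolev space $W^{\alpha,p^\prime}(0,T;W^{-1,p^\prime})$. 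The compact embedding $L^2\hookrightarrow\hookrightarrow W^{-1,p^\prime}$ together with an Aubin--Lions/Arzel\`a--Ascoli argument yields, along a subsequence, ${\tt B}(u_n)\to\chi$ strongly in $C([0,T];W^{-1,p^\prime})$, weakly-$*$ in $L^\infty(0,T;L^2)$ and weakly in $L^p(0,T;W_0^{1,p})$, while $u_n\rightharpoonup u_*$ in $L^p(0,T;W_0^{1,p})$, ${\tt A}(\nabla u_n)\rightharpoonup a_*$, $\sigma(u_n)h_n\rightharpoonup\sigma(u_*)h$, and $\chi={\tt B}(u_*)$; by \eqref{B:reverse-L2} the convergence ${\tt B}(u_n(t))\to{\tt B}(u_*(t))$ also gives $u_n(t)\to u_*(t)$ in $L^2$. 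Passing to the limit in the weak form of \eqref{eq:skeleton} identifies the limit equation up to the flux, and a Minty-type monotonicity argument based on \eqref{A:monotonocity} and the energy identity for $u_n$ --- whose initial and time-boundary terms survive the limit by the strong convergences just obtained --- gives $a_*={\tt A}(\nabla u_*)$, shows $u_*$ solves \eqref{eq:skeleton}, and upgrades $u_n\to u_*$ to strong convergence in $\mathcal{Z}$. Uniqueness for \eqref{eq:skeleton} forces $u_*=u_h$, and the limit being subsequence-independent, $u_{h_n}\to u_h$ in $\mathcal{Z}$. Hence $K_M=\mathcal{G}^0(S_M)$ is the continuous image of a weakly compact set, so it is compact in $\mathcal{Z}$, which is \ref{C2}.

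\emph{Verification of \ref{C1}.} Let $\{h^{\eps}\}\subset\mathcal{A}_M$ converge to $h$ in distribution as $S_M$-valued random elements. By the Girsanov theorem and pathwise uniqueness for \eqref{eq:ldp}, the process $v^{\eps}:=\mathcal{G}^{\eps}\big(W(\cdot)+\tfrac{1}{\sqrt{\eps}}\int_0^{\cdot}h^{\eps}(s)\,ds\big)$ is the unique strong solution of the controlled SPDE
\[
{\rm d}{\tt B}(v^{\eps})-\Div_x{\tt A}(\nabla v^{\eps})\,{\rm d}t=\sigma(v^{\eps})\,h^{\eps}(t)\,{\rm d}t+\sqrt{\eps}\,\sigma(v^{\eps})\,{\rm d}W(t),\qquad v^{\eps}(0)=u_0,
\]
so \ref{C1} amounts to $v^{\eps}\to u_h$ in distribution on $\mathcal{Z}$. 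I would first derive $\eps$-uniform estimates: the analogues of \eqref{esti:weak-solun} for $v^{\eps}$ (the control term absorbed via $h^{\eps}\in S_M$ and Gronwall), a fractional-in-time bound for ${\tt B}(v^{\eps})$ in $W^{\alpha,2}(0,T;W^{-1,p^\prime})$ with $\alpha<1/2$ (because of the stochastic integral), and an additional higher-integrability/regularity bound for $v^{\eps}$ needed to pass to the limit in the controlled drift. These give tightness of the laws of $\big({\tt B}(v^{\eps}),v^{\eps},h^{\eps},\sqrt{\eps}W\big)$ on a suitable product space; by Prokhorov and a Skorokhod-type representation theorem there is a new stochastic basis $(\overline{\Omega},\overline{\mathcal{F}},\overline{\mathbb{P}})$ carrying copies with ${\tt B}(\overline{v}^{\eps})\to{\tt B}(\overline{v}^{*})$ a.s.\ in $\mathcal{Z}$, $\overline{v}^{\eps}\to\overline{v}^{*}$, $\overline{h}^{\eps}\rightharpoonup\overline{h}$ and $\sqrt{\eps}\overline{W}^{\eps}\to 0$. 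Applying It\^o's formula to $\|{\tt B}(\overline{v}^{\eps})\|_{L^2}^2$, the chain rule to ${\tt B}(\overline{v}^{*})$, and again a Minty monotonicity argument (the martingale term tends to zero) shows $\overline{v}^{*}$ is a weak solution of \eqref{eq:skeleton} with control $\overline{h}$; as $\overline{h}$ has the law of $h$, uniqueness for \eqref{eq:skeleton} gives $\overline{v}^{*}=u_{\overline{h}}$. Equality of the laws of $\overline{v}^{\eps}$ and $v^{\eps}$, the uniform estimates and uniform integrability then yield $\overline{\mathbb{E}}\big[\|{\tt B}(\overline{v}^{\eps})-{\tt B}(\overline{v}^{*})\|_{\mathcal{Z}}\wedge 1\big]\to 0$, i.e.\ ${\tt B}(v^{\eps})\to{\tt B}(u_h)$ in probability on $\mathcal{Z}$, whence by \eqref{B:reverse-L2} $v^{\eps}\to u_h$ in probability, a fortiori in distribution, on $\mathcal{Z}$. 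This is \ref{C1}, and Theorem~\ref{thm:main-ldp} follows from Theorem~\ref{thm:ldp-general-sufficient}.

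\emph{Main obstacle.} In both conditions the crux is the simultaneous nonlinearity of ${\tt B}(\cdot)$ and ${\tt A}(\nabla\cdot)$: weak convergence of $\nabla u_n$ does not control ${\tt A}(\nabla u_n)$, so a Minty-type monotonicity argument is unavoidable, and that argument requires genuine strong convergence of ${\tt B}(u_n)$ (equivalently of $u_n$ in $L^2$) together with an energy/It\^o identity whose initial and time-boundary terms survive the passage to the limit. Producing that compactness is delicate because the only available bound on $\partial_t{\tt B}(u_n)$ lives in the weak space $W^{-1,p^\prime}$ and, in the stochastic case, the controlled drift $\sigma(v^{\eps})h^{\eps}$ and the stochastic integral must be shown not to destroy it; this is precisely why the extra regularity estimate for $v^{\eps}$ is required, and establishing that estimate under the sole hypotheses \ref{A2}--\ref{A4} is the technical heart of the argument.
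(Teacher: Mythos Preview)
Your proposal is correct and follows essentially the same route as the paper: reduce to Budhiraja--Dupuis, verify \ref{C2} via uniform bounds, Aubin--Lions/Arzel\`a--Ascoli compactness for $\{{\tt B}(u_n)\}$, a Minty monotonicity argument to identify the flux, and a final upgrade to convergence in $\mathcal{Z}$; then verify \ref{C1} via Girsanov, $\eps$-uniform estimates (including a higher-moment bound), tightness, Skorokhod representation, and again It\^o plus Minty to identify the limit as the skeleton solution. Two small points to tighten: in \ref{C2} you should state explicitly the strong convergence ${\tt B}(u_n)\to{\tt B}(u_*)$ in $L^2(D_T)$ (this is what Aubin--Lions actually gives and is what drives both $\sigma(u_n)h_n\rightharpoonup\sigma(u_*)h$ and the Minty step), and in \ref{C1} the tightness and Skorokhod step yield a.s.\ convergence of ${\tt B}(\overline{v}^{\eps})$ only in $L^2(D_T)$, not in $\mathcal{Z}$ --- the $\mathcal{Z}$-convergence is obtained afterwards by the direct estimate you describe at the end, exactly as in the paper.
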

\begin{rem}
We mention here that due to the lack of strong monotonicity property of the operator ${\tt A}$ and the presence of doubly nonlinearity in the underlying problem, it prevents us to show LDP property of $u^\eps$ on  $C([0,T];L^{2})\cap L^{p}([0,T];W_{0}^{1,p})$---which was reported in \cite{Kavin-Majee-2022} .
\end{rem}
\subsection{Invariant measure:} Let $u(t,v)$ denotes the path-wise unique strong solution of \eqref{eq:doubly-nonlinear} with fixed initial data $u_0=v\in L^2$. Define a family of  semigroup $\{P_t\}_{t\ge 0}$ as
 \begin{align}
(P_t \phi)(v):=\mathbb{E}\big[\phi( {\tt B}(u(t,v)))\big], \quad \phi \in\mathcal{B}(L^2)\,, \label{defi:semi-group}
\end{align}
 where $\mathcal{B}(L^2)$ denotes the space of all bounded Borel measurable functions on $L^2$.
We state the following main theorem regarding the existence of invariant measure.
\begin{thm}\label{thm:existence-invariant-measure}
Let the assumptions \ref{A2}-\ref{A4} are satisfied. In addition assume that there exists $\delta>0$ such that 
\begin{align}
2C_1 C_3 \|\nabla u\|_{L^p}^p-\|\sigma(u)\|_{L^2}^2 \ge \delta \|u\|_{L^2}^p\,. \label{cond:extra-sigma}
\end{align}
Then there exists an invariant measure $\mu \in \mathcal{P}(L^2)$, the set of all Borel probability measures on $L^2$, of the semigroup
 $\{P_t\}$ for the solution of  \eqref{eq:doubly-nonlinear}. In particular, for any $t\ge 0$, there holds
\begin{align*}
\int_{L^2} P_t\phi \,d\mu=\int_{L^2} \phi\,d\mu, \quad \forall ~\phi \in SC_b(L_w^2)\,,
\end{align*}
where  $SC_b(L_w^2)$ denotes the space of bounded, sequentially weakly continuous function on $L^2$; see Definition \ref{defi:se-weak-bounded}. 
\end{thm}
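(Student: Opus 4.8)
The plan is to derive the existence of an invariant measure from the Maslowski--Seidler criterion, Theorem~\ref{thm:M-S}, by checking its two hypotheses for the Markov semigroup $(P_t)_{t\ge0}$ of \eqref{defi:semi-group}: ($\mathrm{i}'$) \emph{boundedness in probability} of the family of laws $\{\mathrm{Law}({\tt B}(u(t,v)))\}_{t\ge0}$ on $(L^2,w)$, and ($\mathrm{ii}'$) the \emph{sequentially weakly Feller} property $P_t\big(SC_b(L^2_w)\big)\subseteq SC_b(L^2_w)$ for every $t\ge0$. It is convenient to regard $w:={\tt B}(u)$ as the state variable: by (II)--(III) the map ${\tt b}^{-1}$ is bi-Lipschitz, so \eqref{eq:doubly-nonlinear} is an equivalent, well-posed evolution equation for $w$ with datum ${\tt B}(u_0)$, the bounds \eqref{esti:weak-solun} and pathwise uniqueness carry over verbatim, and $\|{\tt B}(w)\|_{L^2}\le C_4\|w\|_{L^2}$ by \eqref{esti:B-Lipschitz}. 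Two facts will be used repeatedly: closed balls of the Hilbert space $L^2$ are compact in $(L^2,w)$, and $W_0^{1,p}\hookrightarrow\hookrightarrow L^2$ on the bounded domain $D$ (as $p>2$), hence also $L^2\hookrightarrow\hookrightarrow W^{-1,p^\prime}$.

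\emph{Step 1 (boundedness in probability).} First I would apply Itô's formula to $t\mapsto\|{\tt B}(u(t,v))\|_{L^2}^2$ along the solution of \eqref{eq:doubly-nonlinear}. Using $\nabla{\tt B}(u)={\tt b}'(u)\nabla u$, the bound ${\tt b}'\ge C_3$, and \eqref{A:coercivity} (the negative part of ${\tt a}(x,\nabla u)\cdot\nabla u$ being dominated by $K_1$), the drift term $2\big\langle\mathrm{div}_x{\tt A}(\nabla u),{\tt B}(u)\big\rangle$ is bounded above by $-2C_1C_3\|\nabla u\|_{L^p}^p+C$; combining this with the Itô correction $\|\sigma(u)\|_{L^2}^2$ and invoking the standing hypothesis \eqref{cond:extra-sigma}, then taking expectations and writing $\psi(t):=\mathbb{E}\|{\tt B}(u(t,v))\|_{L^2}^2$, I get
\[
\psi'(t)\le -\delta\,\mathbb{E}\|u(t,v)\|_{L^2}^p+C\le -\delta'\,\psi(t)^{p/2}+C,
\]
where the last inequality uses $\|{\tt B}(w)\|_{L^2}\le C_4\|w\|_{L^2}$ and Jensen's inequality (legitimate since $p/2>1$). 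A comparison argument for this scalar ODE yields $\sup_{t\ge0}\psi(t)\le R_0(v)<\infty$, so by Chebyshev $\inf_{t\ge0}\mathbb{P}\big(\|{\tt B}(u(t,v))\|_{L^2}\le R\big)\ge1-R_0/R^2$; since $L^2$-balls are weakly compact, this is ($\mathrm{i}'$). (Equivalently, integrating the Itô identity in time gives $\tfrac1T\int_0^T\mathbb{E}\|u(s,v)\|_{L^2}^p\,ds\le C$ uniformly in $T$, which suffices for the Krylov--Bogoliubov time averages.)

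\emph{Step 2 (sequentially weakly Feller property).} Next, fix $t>0$ and $\phi\in SC_b(L^2_w)$, and let initial states $w_0^n\rightharpoonup w_0$ weakly in $L^2$; denote by $w_n,w$ the corresponding solutions of the $w$-equation. Since $\{\|w_0^n\|_{L^2}\}$ is bounded, the estimates \eqref{esti:weak-solun} hold uniformly in $n$: the $w_n$ are bounded in $L^\infty(0,T;L^2)\cap L^p(0,T;W_0^{1,p})$, the drift $\int_0^\cdot\mathrm{div}_x{\tt A}(\nabla u_n)\,ds$ (with $u_n={\tt B}^{-1}(w_n)$) is bounded in $C^{0,1/p}([0,T];W^{-1,p^\prime})$ by \eqref{A:boundedness}, and the stochastic part is tight in $C([0,T];L^2)$ by the Burkholder--Davis--Gundy inequality and $\|\sigma(u_n)\|_{L^2}\le c_\sigma\|u_n\|_{L^2}$; hence the laws of $\{w_n\}$ are tight in $C([0,T];W^{-1,p^\prime})$. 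By Prokhorov and the Jakubowski--Skorokhod representation, on a new probability space and along a subsequence $\overline w_n\to Z$ a.s.\ in $C([0,T];W^{-1,p^\prime})$, with $\nabla\overline u_n\rightharpoonup G$ in $L^p(D_T)$ and ${\tt A}(\nabla\overline u_n)\rightharpoonup\chi$ in $L^{p^\prime}(D_T)$, where $\overline u_n:={\tt B}^{-1}(\overline w_n)$. Interpolating the $L^2$-norm between $W_0^{1,p}$ and $W^{-1,p^\prime}$ and using the uniform $L^p(0,T;W_0^{1,p})$-bound upgrades this to strong convergence $\overline w_n\to Z$ in $L^p(0,T;L^2)$, whence by \eqref{B:reverse-poinwise} $\overline u_n\to\overline u:={\tt B}^{-1}(Z)$ strongly in $L^p(0,T;L^2)$; this identifies $G=\nabla\overline u$ and makes $\sigma(\overline u_n)\to\sigma(\overline u)$ strongly in $L^2(D_T)$, so the stochastic integrals pass to the limit. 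For the initial datum, $w_0^n\rightharpoonup w_0$ in $L^2$ \emph{forces} $w_0^n\to w_0$ strongly in $W^{-1,p^\prime}$ by the compact embedding, so evaluating $\overline w_n\to Z$ in $C([0,T];W^{-1,p^\prime})$ at $t=0$ gives $Z(0)=w_0$. A Minty-type monotonicity argument --- the chain rule on ${\tt B}(\overline u)$, Itô's formula on $\|\overline w_n\|_{L^2}^2$, the strong convergences just obtained, and \eqref{A:monotonocity} --- then identifies $\chi={\tt A}(\nabla\overline u)$, so $Z={\tt B}(\overline u)$ solves the $w$-equation with datum $w_0$. By pathwise uniqueness and the Gy\"{o}ngy--Krylov characterisation, the original sequence $\{w_n\}$ converges in distribution in $C([0,T];L^2)$, in particular weakly in $L^2$ at time $t$, to $w$; since $\phi\in SC_b(L^2_w)$, $\mathbb{E}[\phi(w_n(t))]\to\mathbb{E}[\phi(w(t))]$, which is ($\mathrm{ii}'$).

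\emph{Step 3 (conclusion and the main difficulty).} With ($\mathrm{i}'$) and ($\mathrm{ii}'$) in hand, Theorem~\ref{thm:M-S} furnishes $\mu\in\mathcal{P}(L^2)$: the Krylov--Bogoliubov averages $\tfrac1T\int_0^T\mathrm{Law}({\tt B}(u(s,v)))\,ds$ are relatively compact in $\mathcal{P}((L^2,w))$ by ($\mathrm{i}'$), and ($\mathrm{ii}'$) forces each limit point to be invariant, i.e.\ $\int_{L^2}P_t\phi\,d\mu=\int_{L^2}\phi\,d\mu$ for all $\phi\in SC_b(L^2_w)$ and all $t\ge0$. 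The hard part is Step 2: the uniform bounds must be exploited to pass to the limit simultaneously in the nonlinear noise $\sigma(u)$ (which requires \emph{strong} $L^2(D_T)$ convergence, recovered here by interpolation from the spatial $W_0^{1,p}$-bound and the $C([0,T];W^{-1,p^\prime})$-compactness) and, via the Minty trick, in the nonlinear flux ${\tt A}(\nabla u)$, while the initial layer must be controlled so that the limit equation carries the correct datum --- this last point hinging on the fact that weak-$L^2$ convergence of the initial states is automatically strong in $W^{-1,p^\prime}$. The doubly nonlinear coupling of ${\tt B}$ with ${\tt A}$ is precisely what makes the energy bookkeeping in the monotonicity step delicate.
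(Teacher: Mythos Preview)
Your approach matches the paper's: verify the Maslowski--Seidler criterion (Theorem~\ref{thm:M-S}) by establishing boundedness in probability via the It\^o energy estimate together with~\eqref{cond:extra-sigma}, and the sequentially weakly Feller property through the tightness/Skorokhod/Minty machinery that the paper packages in Remark~\ref{rem:cont-dependence-initial-cond} and Lemma~\ref{lem:uniqueness-law}. One point needs correction. Your claim that $w_n\to w$ ``in distribution in $C([0,T];L^2)$'' is too strong: the tightness you established is only in $C([0,T];W^{-1,p'})$, and since $w_0^n\rightharpoonup w_0$ merely \emph{weakly} in $L^2$, strong $C([0,T];L^2)$ convergence would already fail at $t=0$. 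What the argument actually yields---and all that is needed---is convergence in $C([0,T];L^2_w)$ (equivalently, $C([0,T];W^{-1,p'})$ together with the uniform $L^\infty(0,T;L^2)$ bound), giving $w_n(t)\rightharpoonup w(t)$ weakly in $L^2$ at each fixed $t$; this is precisely the topology the paper uses via the space $\mathcal{C}_T$. Relatedly, the Gy\"ongy--Krylov characterisation is invoked in the wrong space and is in any case heavier than necessary: the paper argues more directly that every subsequential limit is a martingale solution with datum $u_0$, so by uniqueness in law (Lemma~\ref{lem:uniqueness-law}) all limits coincide and the full sequence converges in law.
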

%%%%%%%%%%%%%%%%%%%%%%%%%%%%%%%%%%%%%%%%%%%%%%%%%%%%%%%%%
%%%%%%%%%%

\section{Skeleton Equation: Well-posedness result} \label{sec:sk}

In this section, we will prove the existence and uniqueness of solution of the skeleton equation \eqref{eq:skeleton}.  To do so, we first construct an
 approximate solution via semi-implicit time discretization scheme, and then derive necessary uniform bounds.

\subsection{Time discretization and a-priori estimate}
 First, we introduce the semi-implicit time discretization. For $N \in \mathbb{N}$, let $0 = t_0 < t_1 < ... < t_N = T$ be a uniform partition of $[0,T]$ with $\tau := \frac{T}{N} = t_{k+1}-t_k$ for all $k=0,1,...,N-1$. For any $u_0 \in L^2$, denote $\bar{u}_{0} = u_{0, \tau}$, where $ u_{0, \tau} \in W_0^{1,p}$ is the unique solution to the problem $$u_{0, \tau} - \tau \Delta_{p}u_{0, \tau} = u_0.$$ 
 Then  \cite[Lemma 30]{Vallet2019} gives
 \begin{equation}\label{eq:1.1}
\begin{aligned} 
 &  u_{0, \tau} \rightarrow u_0 \ in \ L^2 \text{ as}~~  \tau \rightarrow 0, \\
   &  \frac{1}{2} \left \|u_{0, \tau} \right \|_{L^2}^{2} + \tau \left \| \nabla u_{0, \tau}  \right \|_{L^p}^{p} \leq  \frac{1}{2} \left \|u_{0} \right \|_{L^2}^{2}.
\end{aligned}
\end{equation}
Let us define a projection  $\Pi_{\tau}$ in time as follows:  for any $h \in  L^2([0,T]; \mathbb{R})$, we define the projection $\Pi_{\tau} h \in \mathcal{P}_{\tau}$ by 
          $$\int_{0}^{T} \left ( \Pi_{\tau} h-h, \psi_{\tau}  \right ) \, dt = 0 \quad \forall \, \psi_{\tau} \in \mathcal{P}_{\tau}\,,$$
          where $\mathcal{P}_{\tau} := \left \{ \psi_{\tau}: (0,T) \rightarrow \mathbf{R} : \psi_{\tau} \mid_{(t_j,t_{j+1}]} \text{is a constant in} \, \mathbb{R}  \right \}.$
          Then \cite[Lemma 4.4]{Xin} yields
          \begin{align} \label{est:SI1}
              \begin{cases}
                  \Pi_{\tau} h \rightarrow h \quad in \, L^2([0,T]; \mathbb{R}),\\
                  \left \|\Pi_{\tau} h \right \|_{L^2([0,T]; \mathbb{R})} \leq \left \| h \right \|_{L^2([0,T]; \mathbb{R})}, 
               \displaystyle 
              \end{cases}
          \end{align}
         We  set 
          \begin{align}
          h_{k+1} = \Pi_{\tau} h(t_{k+1}) \quad k=0,1,...,N-1. \label{defi: discrete-h}
          \end{align}
       Then, by \eqref{est:SI1}, we have 
\begin{align}
\tau \sum_{k=0}^{N-1} |h_{k+1}|^2 \le \int_0^T |\Pi_\tau h|^2\,dt \le \int_0^T h^2(t)\,dt \le C\,. \label{esti:proj-time}
\end{align}
Consider a semi-implicit Euler Maruyama scheme of \eqref{eq:skeleton}:  for $k=0,1,...,N-1$,
          \begin{equation}\label{SI1}
              {\tt B}(\bar{u}_{k+1}) - {\tt B}(\bar{u}_{k}) - \tau {\rm div}_x {\tt A}(\nabla {\bar{u}}_{k+1}) = \tau \sigma(\bar{u}_{k})h_{k+1}\,,
          \end{equation}
          where $h_{k+1}$ is given in \eqref{defi: discrete-h}.  The following proposition ensures the existence of solution of  the semi-implicit scheme \eqref{SI1} 
\begin{prop} \label{extdis}
Let $\tau>0$ be small and $\bar{u}_0$ is defined as above. Then there exists a unique weak solution to the problem \eqref{SI1} i.e., for any $k = 0,1,2,...,N-1$, there exists a unique $\bar{u}_{k+1} \in W_{0}^{1,p}(D) $ such that
\begin{align}
  \int_D \left( \big\{ {\tt B}(\bar{u}_{k+1}) -{\tt B}( \bar{u}_{k})\big\}v  +  \tau {\tt A}( \nabla \bar{u}_{k+1})\cdot \nabla v \right)\,dx = \tau \int_D \sigma(\bar{u}_{k})h_{k+1} v\,dx\,. \label{variational_formula_discrete}
  \end{align}
  \end{prop}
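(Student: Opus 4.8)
The plan is to recast the single-step identity \eqref{variational_formula_discrete} as one nonlinear operator equation in the dual space $W^{-1,p^\prime}$ and to solve it by the classical surjectivity theorem for monotone coercive operators. I proceed by induction on $k$, assuming $\bar u_k\in L^2$; this holds for $k=0$ since $\bar u_0=u_{0,\tau}\in W_0^{1,p}\hookrightarrow L^2$ ($D$ bounded and $p>2$). Because $\sigma(0)=0$ and $\sigma$ is Lipschitz, $\sigma(\bar u_k)\in L^2$, and by \eqref{esti:B-Lipschitz} (with ${\tt B}(0)=0$) also ${\tt B}(\bar u_k)\in L^2$, so the datum
\[ F_k:={\tt B}(\bar u_k)+\tau\,\sigma(\bar u_k)\,h_{k+1}\in L^2\hookrightarrow L^{p^\prime}\hookrightarrow W^{-1,p^\prime} \]
is well defined. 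I then define $\mathcal T_k\colon W_0^{1,p}\to W^{-1,p^\prime}$ by $\langle \mathcal T_k(w),v\rangle:=\int_D{\tt B}(w)\,v\,dx+\tau\int_D{\tt A}(\nabla w)\cdot\nabla v\,dx$, so that \eqref{variational_formula_discrete} is exactly the equation $\mathcal T_k(\bar u_{k+1})=F_k$ in $W^{-1,p^\prime}$.

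Next I verify the hypotheses of the Browder--Minty theorem for $\mathcal T_k$ on the reflexive Banach space $W_0^{1,p}$. \emph{Boundedness} and \emph{demicontinuity} (hence hemicontinuity) of $\mathcal T_k$ follow from the corresponding properties of ${\tt A}\colon (L^p)^d\to(L^{p^\prime})^d$ recorded in item (I) together with the growth bound \eqref{A:boundedness}, the Lipschitz continuity of ${\tt B}$ on $L^2$ from item (II), and the embedding $W_0^{1,p}\hookrightarrow L^2$. \emph{Strict monotonicity}: for $w_1\ne w_2$,
\[ \langle \mathcal T_k(w_1)-\mathcal T_k(w_2),\,w_1-w_2\rangle\ \ge\ \int_D\big({\tt B}(w_1)-{\tt B}(w_2)\big)(w_1-w_2)\,dx\ \ge\ C_3\|w_1-w_2\|_{L^2}^2\ >\ 0, \]
where the gradient contribution is dropped using the monotonicity \eqref{A:monotonocity} of ${\tt A}$ and the $L^2$ contribution is bounded below via the elementary inequality $\big({\tt b}(s_1)-{\tt b}(s_2)\big)(s_1-s_2)\ge C_3|s_1-s_2|^2$ (a consequence of $C_3\le{\tt b}^\prime$). \emph{Coercivity}: since $s\,{\tt b}(s)\ge C_3 s^2\ge 0$ and by \eqref{A:coercivity},
\[ \langle \mathcal T_k(w),w\rangle\ \ge\ C_3\|w\|_{L^2}^2+\tau C_1\|\nabla w\|_{L^p}^p-\tau\|K_1\|_{L^1}\ \ge\ \tau C_1\|\nabla w\|_{L^p}^p-\tau\|K_1\|_{L^1}, \]
and since $\|\nabla\cdot\|_{L^p}$ is an equivalent norm on $W_0^{1,p}$ (Poincar\'e) and $p>1$, we get $\langle \mathcal T_k(w),w\rangle/\|w\|_{W_0^{1,p}}\to\infty$ as $\|w\|_{W_0^{1,p}}\to\infty$.

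By the main theorem on monotone operators (see e.g. \cite[Theorem 26.A]{zeidler1990}), $\mathcal T_k$ is surjective onto $W^{-1,p^\prime}$, hence there exists $\bar u_{k+1}\in W_0^{1,p}$ with $\mathcal T_k(\bar u_{k+1})=F_k$, i.e. $\bar u_{k+1}$ solves \eqref{variational_formula_discrete}. Uniqueness is immediate from the strict monotonicity estimate: if $\bar u_{k+1}$ and $\tilde u_{k+1}$ both solve it, subtracting the two identities and testing with $\bar u_{k+1}-\tilde u_{k+1}$ yields $C_3\|\bar u_{k+1}-\tilde u_{k+1}\|_{L^2}^2\le 0$. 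Since $\bar u_{k+1}\in W_0^{1,p}\subset L^2$, the inductive hypothesis is restored and the argument runs through all $k=0,1,\dots,N-1$ (note that no smallness of $\tau$ is actually needed for this step, it being required only for the later a-priori analysis). This is essentially a textbook application of monotone operator theory; the only mildly delicate points are discarding the nonnegative $L^2$-contribution coming from ${\tt B}$ when establishing coercivity in the $W_0^{1,p}$-norm, and keeping track of the fact that $F_k$ and the iterates remain in $L^2$ at every step — which is where I expect the (very minor) bookkeeping, rather than any genuine obstacle, to lie.
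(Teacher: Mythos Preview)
Your proof is correct and follows essentially the same route as the paper: define the operator $\mathcal T\colon W_0^{1,p}\to W^{-1,p'}$, check strict monotonicity, coercivity and (hemi/demi-)continuity, apply Browder--Minty, and conclude by induction on $k$. The only difference is that the paper additionally records the continuity of $\mathcal T^{-1}$ (via demi-continuity and a Dunford--Pettis argument), which is not needed for the statement of this proposition.
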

 \begin{proof}
 For a fixed $\tau > 0$, we define an operator $\mathcal{T} : W_{0}^{1,p}(D) \rightarrow W^{-1,p'}(D)$ by
 $$ \left \langle \mathcal{T}(u), v  \right \rangle_{W^{-1,p'}(D),W_{0}^{1,p}(D)} := \int_{D} \left ({\tt B}(u) v + \tau {\tt A}(\nabla u)\cdot \nabla{v}  \right ) \, dx, \quad \forall \, u,v \in  W_{0}^{1,p}(D). $$
 Then, $\mathcal{T}$ is strictly monotone, coercive and continuous operator. Hence an application of Minty-Browder theorem \cite{Ruzicka2004} yields that $\mathcal{T}$
  is bijective. Moreover, using a similar argument as in the proof of 
  \cite[Lemma $4.1.1$]{Wittbold2019}, we infer that $\mathcal{T}^{-1}:W^{-1,p^\prime} \goto W_0^{1,p}$ is demi-continuous. Since $W_0^{1,p}$ is separable, by Dunford-Pettis theorem,
  $\mathcal{T}^{-1}:W^{-1,p^\prime} \goto W_0^{1,p}$ is continuous. 
\vspace{.1cm}

Thanks to \eqref{esti:B-Lipschitz} and the assumption \ref{A3} together with \eqref{esti:proj-time}, we observe that
\begin{align*}
\|{\tt B}(\bar{u}_{k}) + \tau \sigma(\bar{u}_{k})h_{k+1} \|_{L^2}^2\le C\big(1 + \tau^2\|h_{k+1}|^2\big)\|\bar{u}_k\|_{L^2}^2\le C\left( 1+ \int_0^T h^2(t)\,dt \right) \|\bar{u}_k\|_{L^2}^2\,.
\end{align*}
 Hence, there exists unique $\bar{u}_{k+1} \in W_{0}^{1,p}(D)$ such that
 \begin{align*} 
  & \bar{u}_{k+1} = \mathcal{T}^{-1}\big({\tt B}(\bar{u}_{k}) + \tau \sigma(\bar{u}_{k})h_{k+1}\big) \quad \forall \,k = 0,1,...,N-1. 
 \end{align*}
 Thus, by induction we finish the proof of Proposition \ref{extdis}.
 \end{proof}
 %%%%%%%%%%%%%%%%%%%%%%%%%%%%%%%%%%%%%%%%%%%%%%%%
 
 We wish to derive a-priori estimates for ${\tt B}(\bar{u}_{k+1})$.  Regarding this, we have the following lemma.
 \begin{lem}
Let the assumptions \ref{A2}-\ref{A4} hold true, and $\bar{u}_{k+1}$ be a solution of \eqref{SI1} for $0\le k\le N-1$.
Then there exists a constant $C>0$, independent of the time step size $\tau$, such that 
\begin{align}
 \sup_{1\le n\le N} \| {\tt B}(\bar{u}_{n})\|_{L^2}^2 +  \sum_{k=0}^{N-1} \| {\tt B}(\bar{u}_{k+1})-{\tt B}(\bar{u}_k)\|_{L^2}^2 +
  \tau \sum_{k=0}^{N-1} \|\grad \bar{u}_{k+1}\|_{L^p}^p  \le  C\,. \label{a-prioriestimate:1}
\end{align}
\end{lem}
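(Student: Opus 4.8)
The plan is a discrete energy estimate obtained by testing the variational identity \eqref{variational_formula_discrete} with $v = {\tt B}(\bar{u}_{k+1})$. This is an admissible test function: by \eqref{esti:operator-B-grad} and ${\tt b}(0)=0$ we have ${\tt B}(\bar{u}_{k+1})\in W_0^{1,p}$. For the discrete time-increment term I would use the elementary identity $2a(a-c)=a^2-c^2+(a-c)^2$ with $a={\tt B}(\bar{u}_{k+1})$, $c={\tt B}(\bar{u}_k)$, which yields
\[
\int_D\big({\tt B}(\bar{u}_{k+1})-{\tt B}(\bar{u}_k)\big){\tt B}(\bar{u}_{k+1})\,dx=\tfrac12\Big(\|{\tt B}(\bar{u}_{k+1})\|_{L^2}^2-\|{\tt B}(\bar{u}_k)\|_{L^2}^2+\|{\tt B}(\bar{u}_{k+1})-{\tt B}(\bar{u}_k)\|_{L^2}^2\Big).
\]
For the diffusion term, the chain rule gives $\nabla{\tt B}(\bar{u}_{k+1})={\tt b}'(\bar{u}_{k+1})\nabla\bar{u}_{k+1}$, and combining $C_3\le{\tt b}'\le C_4$ with the coercivity in \ref{A2} (note $K_1\ge 0$ a.e., by taking $\xi=0$) produces the lower bound $\tau\int_D{\tt b}'(\bar{u}_{k+1}){\tt a}(x,\nabla\bar{u}_{k+1})\cdot\nabla\bar{u}_{k+1}\,dx\ge C_1C_3\,\tau\|\nabla\bar{u}_{k+1}\|_{L^p}^p-C_4\|K_1\|_{L^1}\tau$.

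For the forcing term I would use $\sigma(0)=0$ together with the Lipschitz bound \ref{A4}, then the reverse bound \eqref{B:reverse-L2} and Young's inequality, to get
\[
\Big|\tau\int_D\sigma(\bar{u}_k)h_{k+1}{\tt B}(\bar{u}_{k+1})\,dx\Big|\le\frac{c_\sigma}{C_3}\,\tau|h_{k+1}|\,\|{\tt B}(\bar{u}_k)\|_{L^2}\|{\tt B}(\bar{u}_{k+1})\|_{L^2}\le\frac{c_\sigma}{2C_3}\,\tau|h_{k+1}|\big(\|{\tt B}(\bar{u}_k)\|_{L^2}^2+\|{\tt B}(\bar{u}_{k+1})\|_{L^2}^2\big).
\]
Summing the resulting inequality over $k=0,\dots,n-1$ and using $\|{\tt B}(\bar{u}_0)\|_{L^2}\le C_4\|u_{0,\tau}\|_{L^2}\le C_4\|u_0\|_{L^2}$ from \eqref{esti:B-Lipschitz} and \eqref{eq:1.1}, one arrives, for every $1\le n\le N$, at
\[
\tfrac12\|{\tt B}(\bar{u}_n)\|_{L^2}^2+\tfrac12\sum_{k=0}^{n-1}\|{\tt B}(\bar{u}_{k+1})-{\tt B}(\bar{u}_k)\|_{L^2}^2+C_1C_3\,\tau\sum_{k=0}^{n-1}\|\nabla\bar{u}_{k+1}\|_{L^p}^p\le C+\frac{c_\sigma}{2C_3}\sum_{k=0}^{n-1}\tau|h_{k+1}|\big(\|{\tt B}(\bar{u}_k)\|_{L^2}^2+\|{\tt B}(\bar{u}_{k+1})\|_{L^2}^2\big),
\]
where $C$ depends only on $\|u_0\|_{L^2}$, $\|K_1\|_{L^1}$, $C_4$, and $T$.

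The only delicate point — and hence the step I expect to be the main obstacle — is closing this estimate with a constant independent of the step size $\tau$, since the right-hand side contains the ``implicit'' term $\tau|h_{k+1}|\|{\tt B}(\bar{u}_{k+1})\|_{L^2}^2$. Here the projection estimate \eqref{esti:proj-time} is essential: it gives $\tau|h_{k+1}|^2\le\int_0^T h^2\,dt$, whence $\tau|h_{k+1}|\le C\sqrt{\tau}$, so for $\tau$ small enough the term with index $k+1=n$ can be absorbed into the left-hand side, while by Cauchy--Schwarz $\sum_{k=0}^{N-1}\tau|h_{k+1}|\le\sqrt{T}\big(\sum_{k=0}^{N-1}\tau|h_{k+1}|^2\big)^{1/2}\le C$ uniformly in $\tau$. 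After relabelling indices the right-hand side takes the form $C+C\sum_{j=0}^{n-1}\omega_j\|{\tt B}(\bar{u}_j)\|_{L^2}^2$ with $\sum_j\omega_j\le C$ uniformly, and the discrete Grönwall lemma gives $\sup_{1\le n\le N}\|{\tt B}(\bar{u}_n)\|_{L^2}^2\le C$. Substituting this bound back into the summed inequality controls the two remaining sums, yielding \eqref{a-prioriestimate:1}.
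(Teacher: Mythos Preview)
Your proof is correct and follows the same scheme as the paper: test \eqref{variational_formula_discrete} with $v={\tt B}(\bar{u}_{k+1})$, use the identity \eqref{eq:identity-0}, the coercivity bound \eqref{esti:2}, sum over $k$, and close with the discrete Gronwall lemma. The only minor difference is in the Young inequality for the forcing term: the paper splits it as $\tau|h_{k+1}|\,\|\sigma(\bar u_k)\|_{L^2}\|{\tt B}(\bar u_{k+1})\|_{L^2}\le \tfrac14\|{\tt B}(\bar u_{k+1})\|_{L^2}^2+C\tau^2|h_{k+1}|^2\|\bar u_k\|_{L^2}^2$, absorbing the implicit term with a fixed coefficient before summing (and then using $\tau|h_{k+1}|^2\le\int_0^T h^2$ from \eqref{esti:proj-time}), whereas you keep the weight $\tau|h_{k+1}|$ and absorb the top-index term after summation via $\tau|h_n|\le C\sqrt{\tau}$---both variants lead to the same Gronwall closure.
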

 \begin{proof}
 To prove \eqref{a-prioriestimate:1}, we follow the ideas from \cite{Majee2020,Wittbold2019}. 
Taking  $v={\tt B}(\bar{u}_{k+1})$ as a test function in \eqref{variational_formula_discrete}, and using Young's inequality, and the identity
\begin{align}
 (a-b)a= \frac{1}{2}\big[a^2 + (a-b)^2 -b^2\big] \quad \forall\,a,b \in \R, \label{eq:identity-0}
\end{align}
we get
\begin{align}
 &\frac{1}{2} \Big\{\| {\tt B}(\bar{u}_{k+1})\|_{L^2}^2+ \| {\tt B}(\bar{u}_{k+1})-{\tt B}(\bar{u}_k)||_{L^2}^2 - \|\ {\tt B}(\bar{u}_k)\|_{L^2}^2 \Big\}  
  + \tau\,   \int_D {\tt A}(\nabla \bar{u}_{k+1}) \cdot \nabla {\tt B}(\bar{u}_{k+1})\,{\rm d}x  \notag \\
 &  \le \tau \|\sigma(\bar{u}_k)\|_{L^2} |h_{k+1}|\|{\tt B}(\bar{u}_{k+1})\|_{L^2}
 \le \frac{1}{4}\|{\tt B}(\bar{u}_{k+1})\|_{L^2}^2 + C\tau^2 |h_{k+1}|^2 \|\bar{u}_k\|_{L^2}^2 \,. \label{esti:1}
  \end{align}
  Observe that, in view of \eqref{A:coercivity} and the boundedness property of ${\tt b}^\prime$,
\begin{align}
&  \int_D {\tt A}(\nabla \bar{u}_{k+1}) \cdot \nabla {\tt B}(\bar{u}_{k+1})\,{\rm d}x =  \int_D {\tt A}(\nabla \bar{u}_{k+1}) \cdot  \nabla \bar{u}_{k+1} {\tt b}^\prime(\bar{u}_{k+1})\,{\rm d}x \notag \\
& =   \int_D  \big\{ {\tt A}(\nabla \bar{u}_{k+1}) \cdot   \nabla \bar{u}_{k+1} + K_1(x) \big\} {\tt b}^\prime(\bar{u}_{k+1})\,{\rm d}x -  \int_D K_1(x) {\tt b}^\prime(\bar{u}_{k+1})\,{\rm d}x  \notag \\
& \ge C_3 C_1 \|\nabla \bar{u}_{k+1}\|_{L^p}^p - C_4  \|K_1\|_{L^1}\,. \label{esti:2}
\end{align}
Combining \eqref{esti:1} and \eqref{esti:2}, and using the assumptions \ref{A4}, we have 
\begin{align*}
 &\frac{1}{4}\left( \| {\tt B}(\bar{u}_{k+1})\|_{L^2}^2- \|{\tt B}(\bar{u}_k)\|_{L^2}^2  +\| {\tt B}(\bar{u}_{k+1})-{\tt B}(\bar{u}_k)||_{L^2}^2 \right)  + \tau C_3 C_1 \|\nabla \bar{u}_{k+1}\|_{L^p}^p  \\
 & \le \tau C_4 \|K_1\|_{L^1} + C \tau^2 \|\bar{u_k}\|_{L^2}^2 |h_{k+1}|^2 \le   \tau C_4 \|K_1\|_{L^1} + \frac{C}{C_3^2} \tau^2 \|{\tt B}(\bar{u_k})\|_{L^2}^2 |h_{k+1}|^2  \notag \\
 & \le C\left( 1 +  \tau \int_0^T h^2(t)\,dt  \|{\tt B}(\bar{u_k})\|_{L^2}^2 \right)
\end{align*}
where in the second resp. last inequality we have used \eqref{B:reverse-L2}  resp. \eqref{esti:proj-time}.  For fixed $n$ with $1 \le n \le N$,  we sum over $k=0,1,\ldots, n-1$ to have
 \begin{align*}
&  \| {\tt B}(\bar{u}_{n})\|_{L^2}^2- \mathbb{E}\big[\| {\tt B}(\bar{u}_0)\|_{L^2}^2 +   \sum_{k=0}^{n-1}\| {\tt B}(\bar{u}_{k+1})-{\tt B}(\bar{u}_k)||_{L^2}^2 
  +  \tau \sum_{k=0}^{n-1} \mathbb{E}\big[ \|\nabla \bar{u}_{k+1}\|_{L^p}^p\big]  \notag \\
 & \le  CT \|K_1\|_{L^1}  + C\tau  \sum_{k=0}^{n-1}\mathbb{E}\big[\| {\tt B}(\hat{u}_k)\|_{L^2}^2\big]\,.
\end{align*}
We use discrete Gronwall's lemma to arrive at the assertion \eqref{a-prioriestimate:1}.
 \end{proof}
 %%%%%%%%%%%%%%%%%%%%%%%%%%%%%%%%%%%%%%%%%%%%%%%
 
 Next, we define some functions on the whole time interval $[0,T]$ in terms of discrete solutions $\{\bar{u}_k\}$, and then derived essential a-priori bounds. To proceed further, we introduce the following functions:
  \begin{defi}\label{defi:approximate-skeleton}
          For $N \in \mathbb{N}$, $\tau > 0$ define the right-continuous step functions
          \[ u_{\tau}(t) = \sum_{k=0}^{N-1} \bar{u}_{k+1} \chi_{[t_k,t_{k+1})} (t), \quad t \in [0,T), \quad u_\tau(T)=u_N\,,\]
          \[ h_{\tau}(t) = \sum_{k=0}^{N-1} {h}_{k+1} \chi_{[t_k,t_{k+1})} (t), \quad t \in [0,T),\quad h_{\tau}(T)=h_N\,, \]
          the left-continuous step function
          \[\hat{u}_{\tau}(t) = \sum_{k=0}^{N-1} \bar{u}_{k} \chi_{(t_k,t_{k+1}]} (t), \quad t \in (0,T], \quad \hat{u}_{\tau}(0)=\bar{u}_0\,,\]
          and the piecewise affine functions
          \begin{align*}
          \tilde{u}_{\tau}(t) &= \sum_{k=0}^{N-1} \left (\frac{\bar{u}_{k+1}-\bar{u}_{k}}{\tau} (t-t_k) + \bar{u}_{k}  \right ) \chi_{[t_k,t_{k+1})} (t), \quad t \in [0,T),\quad  \tilde{u}_{\tau}(T) =\bar{u}_N, \notag \\
           \tilde{{\tt B}}_{\tau}(t)& = \sum_{k=0}^{N-1} \left (\frac{{\tt B}(\bar{u}_{k+1})-{\tt B}(\bar{u}_{k})}{\tau} (t-t_k) + {\tt B}(\bar{u}_{k})  \right ) \chi_{[t_k,t_{k+1})} (t), \quad t \in [0,T),\quad  \tilde{{\tt B}}_{\tau}(T) ={\tt B}(\bar{u}_N)\,. 
           \end{align*}
         \end{defi}
 Next lemma is about a-priori estimate for $\tilde{{\tt B}}_\tau$. 
 \begin{lem}\label{lem:a-priori:2}
Under the assumptions \ref{A2}-\ref{A4}, there holds
\begin{itemize}
  \item [(i)] $   \displaystyle \underset{t\in [0,T]}\sup\, \| \tilde{{\tt B}}_{\tau}(t)\|_{L^2}^2 \le C, \quad
  \int_0^T\|\nabla u_{\tau}(t)\|_{L^p}^p\,{\rm d}t  \le C,  \quad  \int_{D_T} | \nabla {\tt B}( u_\tau(t))|^{p}\,{\rm d}x\,{\rm d}t \le C$, 
  \item [(ii)] $ \displaystyle   \int_{D_T} |{\tt A}(\nabla u_\tau(t))|^{p^\prime}\,{\rm d}x\,{\rm d}t \le C, \quad \big\|\tilde{{\tt B}}_{\tau}\big\|_{L^p(0,T; W_0^{1,p})}^p\le C
  $,
\end{itemize}
for some positive constant $C$, independent of the mesh size $\tau>0$.
\end{lem}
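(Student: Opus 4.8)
The proof of Lemma~\ref{lem:a-priori:2} is essentially a matter of repackaging the discrete bound \eqref{a-prioriestimate:1} into statements about the interpolants of Definition~\ref{defi:approximate-skeleton}, together with two extra ingredients: the boundedness estimate \eqref{A:boundedness} for ${\tt A}$ and the gradient estimate \eqref{esti:operator-B-grad} for ${\tt B}$. First I would deal with the $L^\infty(0,T;L^2)$ bound on $\tilde{{\tt B}}_\tau$. On each interval $[t_k,t_{k+1})$ the value $\tilde{{\tt B}}_\tau(t)$ is a convex combination of ${\tt B}(\bar u_k)$ and ${\tt B}(\bar u_{k+1})$, so by convexity of $\|\cdot\|_{L^2}^2$ (or simply the triangle inequality) $\|\tilde{{\tt B}}_\tau(t)\|_{L^2}^2 \le \max\{\|{\tt B}(\bar u_k)\|_{L^2}^2, \|{\tt B}(\bar u_{k+1})\|_{L^2}^2\} \le \sup_{1\le n\le N}\|{\tt B}(\bar u_n)\|_{L^2}^2 \le C$ by the first term in \eqref{a-prioriestimate:1}; the case $t=T$ is immediate.

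Next, the two bounds $\int_0^T \|\nabla u_\tau\|_{L^p}^p\,{\rm d}t \le C$ and $\int_{D_T}|\nabla {\tt B}(u_\tau)|^p\,{\rm d}x\,{\rm d}t \le C$: by definition of the right-continuous step function $u_\tau$, we have $\int_0^T \|\nabla u_\tau(t)\|_{L^p}^p\,{\rm d}t = \tau \sum_{k=0}^{N-1}\|\nabla \bar u_{k+1}\|_{L^p}^p$, which is precisely the third term in \eqref{a-prioriestimate:1}. For the second, apply \eqref{esti:operator-B-grad} pointwise in $t$: $\|\nabla {\tt B}(u_\tau(t))\|_{L^p}^p \le C_4^p \|\nabla u_\tau(t)\|_{L^p}^p$, then integrate in time and use what we just proved. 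This also shows $u_\tau$ is bounded in $L^p(0,T;W_0^{1,p})$, using the Poincar\'e inequality to control $\|u_\tau\|_{L^p}$ by $\|\nabla u_\tau\|_{L^p}$ on the bounded domain $D$.

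For part (ii), the bound on $\int_{D_T}|{\tt A}(\nabla u_\tau)|^{p'}\,{\rm d}x\,{\rm d}t$ follows by inserting $v = u_\tau(t)$ into the growth estimate \eqref{A:boundedness}: $\|{\tt A}(\nabla u_\tau(t))\|_{L^{p'}}^{p'} \le C\big(\|\nabla u_\tau(t)\|_{L^p}^{(p-1)p'} + \|K_2\|_{L^{p'}}^{p'}\big) = C\big(\|\nabla u_\tau(t)\|_{L^p}^{p} + \|K_2\|_{L^{p'}}^{p'}\big)$ since $(p-1)p' = p$; integrating in $t$ and invoking the first bound of part (i) gives the claim. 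Finally, for $\|\tilde{{\tt B}}_\tau\|_{L^p(0,T;W_0^{1,p})}^p \le C$: on $[t_k,t_{k+1})$, $\nabla\tilde{{\tt B}}_\tau(t)$ is the same convex combination of $\nabla{\tt B}(\bar u_k)$ and $\nabla{\tt B}(\bar u_{k+1})$, so by convexity $\|\nabla\tilde{{\tt B}}_\tau(t)\|_{L^p}^p \le \max\{\|\nabla{\tt B}(\bar u_k)\|_{L^p}^p,\|\nabla{\tt B}(\bar u_{k+1})\|_{L^p}^p\}$; using \eqref{esti:operator-B-grad} this is $\le C_4^p \max\{\|\nabla\bar u_k\|_{L^p}^p, \|\nabla \bar u_{k+1}\|_{L^p}^p\} \le C_4^p\big(\|\nabla\bar u_k\|_{L^p}^p + \|\nabla\bar u_{k+1}\|_{L^p}^p\big)$, and integrating over $[0,T]$ produces a sum $\tau\sum_k \|\nabla\bar u_{k+1}\|_{L^p}^p$ (up to a harmless shift/doubling of indices and the boundary term $\tau\|\nabla\bar u_0\|_{L^p}^p$, which is controlled by \eqref{eq:1.1}), again bounded by \eqref{a-prioriestimate:1}. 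The $L^p$-norm of $\tilde{{\tt B}}_\tau$ itself is controlled via Poincar\'e as before.

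**Main obstacle.** There is no serious analytic obstacle here; the one point requiring a little care is the bookkeeping with the affine interpolant $\tilde{{\tt B}}_\tau$, specifically making sure the term $\|\nabla{\tt B}(\bar u_0)\|_{L^p}^p = \|\nabla{\tt B}(u_{0,\tau})\|_{L^p}^p$ arising from the $k=0$ piece does not blow up as $\tau\to 0$ — this is handled by \eqref{esti:operator-B-grad} together with the bound $\tau\|\nabla u_{0,\tau}\|_{L^p}^p \le \frac12\|u_0\|_{L^2}^2$ from \eqref{eq:1.1}, so that $\tau\|\nabla{\tt B}(\bar u_0)\|_{L^p}^p \le C_4^p \tau \|\nabla u_{0,\tau}\|_{L^p}^p \le C$, which is exactly what the affine interpolation contributes after integrating the $k=0$ term over an interval of length $\tau$. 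Everything else is a direct transcription of \eqref{a-prioriestimate:1} through the structural estimates of Section~\ref{sec:preliminaries}.
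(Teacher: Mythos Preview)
Your proposal is correct and follows essentially the same route as the paper's own proof: both reduce (i) to the discrete estimate \eqref{a-prioriestimate:1} via the observation that $\tilde{{\tt B}}_\tau$ on each subinterval is a convex combination of consecutive ${\tt B}(\bar u_k)$'s, and both derive (ii) from the growth bound \eqref{A:boundedness} together with \eqref{esti:operator-B-grad} and the control on $\tau\|\nabla \bar u_0\|_{L^p}^p$ from \eqref{eq:1.1}. Your treatment of the $k=0$ boundary term for the affine interpolant is in fact slightly more explicit than the paper's, but the argument is the same.
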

 \begin{proof}
 A straightforward calculation shows that 
\begin{align*}
\begin{cases}
  \displaystyle  \underset{t\in [0,T]}\sup\,\| \tilde{{\tt B}}_{\tau}(t)\|_{L^2}^2 = \underset{0\le k\le N-1}\max\,\|{\tt B}(\bar{u}_{k+1})\|_{L^2}^2\,, \\
  \displaystyle \int_0^T\|\nabla u_{\tau}(t)\|_{L^p}^p\,{\rm d}t  \le \tau \sum_{k=0}^{N-1}  \|\nabla \bar{u}_{k+1}\|_{L^p}^p \,.
 \end{cases}
\end{align*}
Moreover, thanks to \eqref{esti:operator-B-grad}, one have 
$$  \int_{D_T} | \nabla {\tt B}( u_\tau(t))|^{p}\,{\rm d}x\,{\rm d}t \le C_4^p  \int_0^T\|\nabla u_{\tau}(t)\|_{L^p}^p\,{\rm d}t  .$$
Hence the assertion ${\rm (i)}$ follows from \eqref{a-prioriestimate:1}.
\vspace{0.1cm}

 Using \eqref{A:boundedness}, the assumption \ref{A3}, and the definition of $u_{\tau}$, we observe that
\begin{align*}
 &  \int_{D_T} |{\tt A}(\nabla u_\tau(t))|^{p^\prime}\,{\rm d}x\,{\rm d}t 
\le \tau \sum_{k=0}^{N-1}  \|{\tt A}(\nabla \hat{u}_{k+1})\|_{L^{p^\prime}}^{p^\prime}
  \le C_2 \tau  \sum_{k=0}^{N-1}  \|\nabla \hat{u}_{k+1}\|_{L^p}^p  + T \|K_2\|_{L^{p^\prime}}^{p^\prime}\,.
  \end{align*}
Moreover, in view of \eqref{eq:1.1} and \eqref{esti:operator-B-grad}, one can easily see that
\begin{align*}
 &  \big\|\tilde{{\tt B}}_{\tau}\big\|_{L^p(0,T; W_0^{1,p})}^p
 \le C \tau \sum_{k=0}^N   \|{\tt B}(\bar{u}_k)\|_{W_0^{1,p}}^p 
 \le C  \int_0^T \|\grad  {\tt B}(u_{\tau}(t))\|_{L^p}^p\,{\rm d}t + \tau\,\|\grad  {\tt B}(\hat{u}_0)\|_{L^p}^p\notag \\
 & \le C\,\Big(\|u_0\|_{L^2}^2  +  \int_{D_T} | \nabla {\tt B}( u_\tau(t))|^{p}\,{\rm d}x\,{\rm d}t \Big).
\end{align*}
Hence ${\rm (ii)}$ follows from  ${\rm (i)}$. This completes the proof. 
 \end{proof}
 %%%%%%%%%%%%%%%%%%%%%%%%%%%%%%%%%%%%%%%%%%%%%%%%%%%%%%%%%%%%%%%%%%
 
    \subsection{Convergence analysis of $\{\tilde{{\tt B}}_\tau\}$} This subsection is devoted to analyze the convergence of the family $\{\tilde{\tt B}_\tau\}$ in some appropriate space. Before that, we need some preparation. Let $(\mathbb{X}, \left \| \cdot \right \|_{\mathbb{X}})$ be a separable metric space. For any $q > 1$ and $\alpha \in (0,1)$, let $W^{\alpha,q}([0,T];\mathbb{X})$ be the Sobolev space  of all $u \in L^{q}([0,T];\mathbb{X})$ such that
\begin{align} \label{norm def1}
   \int_{0}^{T} \int_{0}^{T} \frac{\left \| u(t) - u(s) \right \|_{\mathbb{X}}^{q}}{\left | t-s \right |^{1+\alpha q}} \, dt \, ds < \infty 
\end{align}
with the norm 
\[ \left \| u \right \|_{W^{\alpha,q}([0,T];\mathbb{X})}^{q} = \int_{0}^{T}  \left \| u \right \|_{\mathbb{X}}^{q} \, dt + \int_{0}^{T} \int_{0}^{T} \frac{\left \| u(t) - u(s) \right \|_{\mathbb{X}}^{q}}{\left | t-s \right |^{1+\alpha q}} \, dt \, ds.  \]

\begin{lem}
\label{lem:cpt0}
The following estimation holds: for $\alpha \in (0, \frac{1}{p})$
\[ \sup_{\tau > 0} \left\{ \left\| \tilde{\tt B}_{\tau}\right\|_{W^{\alpha,p}([0,T];{W^{-1,p'}})}  \right\}  < \infty. \]
\end{lem}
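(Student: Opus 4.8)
The plan is to control $\tilde{\tt B}_\tau$ through its time derivative, which the scheme \eqref{SI1} makes explicit. On each subinterval $(t_k,t_{k+1})$ the map $\tilde{\tt B}_\tau$ is affine with $\partial_t\tilde{\tt B}_\tau(t)=\tau^{-1}\big({\tt B}(\bar u_{k+1})-{\tt B}(\bar u_k)\big)$, so \eqref{SI1} rewrites, in terms of the step functions of Definition \ref{defi:approximate-skeleton}, as
$$\partial_t\tilde{\tt B}_\tau(t)={\rm div}_x{\tt A}(\nabla u_\tau(t))+\sigma(\hat u_\tau(t))\,h_\tau(t)\qquad\text{in }W^{-1,p^\prime},\ \ \text{for a.e. }t\in(0,T).$$
First I would show that $g_\tau:=\partial_t\tilde{\tt B}_\tau$ is bounded in $L^{p^\prime}(0,T;W^{-1,p^\prime})$ uniformly in $\tau$. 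For the drift part this is immediate from Lemma \ref{lem:a-priori:2}(ii), since $\|{\rm div}_x{\tt A}(\nabla u_\tau(t))\|_{W^{-1,p^\prime}}\le\|{\tt A}(\nabla u_\tau(t))\|_{(L^{p^\prime})^d}$ and the right-hand side is $p^\prime$-integrable in time with a $\tau$-independent bound. For the forcing part I would use \ref{A4}, \eqref{B:reverse-L2}, the continuous embedding $L^2\hookrightarrow W^{-1,p^\prime}$ (valid since $W_0^{1,p}\hookrightarrow L^p\hookrightarrow L^2$ for $p>2$ on the bounded domain $D$), and the uniform bound on $\sup_k\|{\tt B}(\bar u_k)\|_{L^2}$ furnished by \eqref{a-prioriestimate:1} and \eqref{eq:1.1}, to get $\|\sigma(\hat u_\tau(t))h_\tau(t)\|_{W^{-1,p^\prime}}\le C|h_\tau(t)|$; then, because $p^\prime<2$, Hölder's inequality and \eqref{esti:proj-time} give $\int_0^T|h_\tau(t)|^{p^\prime}\,dt=\tau\sum_{k=0}^{N-1}|h_{k+1}|^{p^\prime}\le T^{\,1-p^\prime/2}\big(\tau\sum_{k=0}^{N-1}|h_{k+1}|^2\big)^{p^\prime/2}\le C$.

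Once $g_\tau$ is bounded in $L^{p^\prime}(0,T;W^{-1,p^\prime})$, I would estimate the Gagliardo-type double integral in \eqref{norm def1} via the fundamental theorem of calculus. Since $\tilde{\tt B}_\tau$ is continuous and piecewise affine, hence absolutely continuous, as a $W^{-1,p^\prime}$-valued map, for $0\le s<t\le T$ one has $\tilde{\tt B}_\tau(t)-\tilde{\tt B}_\tau(s)=\int_s^t g_\tau(r)\,dr$ in $W^{-1,p^\prime}$, and Hölder's inequality with exponents $p$ and $p^\prime$ yields
$$\|\tilde{\tt B}_\tau(t)-\tilde{\tt B}_\tau(s)\|_{W^{-1,p^\prime}}\le|t-s|^{1/p}\Big(\int_s^t\|g_\tau(r)\|_{W^{-1,p^\prime}}^{p^\prime}\,dr\Big)^{1/p^\prime},$$
so that
$$\frac{\|\tilde{\tt B}_\tau(t)-\tilde{\tt B}_\tau(s)\|_{W^{-1,p^\prime}}^{p}}{|t-s|^{1+\alpha p}}\le\frac{1}{|t-s|^{\alpha p}}\Big(\int_0^T\|g_\tau(r)\|_{W^{-1,p^\prime}}^{p^\prime}\,dr\Big)^{p/p^\prime}\le\frac{C}{|t-s|^{\alpha p}}.$$
Because $\alpha<1/p$ we have $\alpha p<1$, hence $\int_0^T\int_0^T|t-s|^{-\alpha p}\,dt\,ds<\infty$, and the double integral in \eqref{norm def1} is bounded uniformly in $\tau$. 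The remaining $L^p(0,T;W^{-1,p^\prime})$ part of the norm is controlled by $\sup_{t\in[0,T]}\|\tilde{\tt B}_\tau(t)\|_{L^2}^2\le C$ from Lemma \ref{lem:a-priori:2}(i) and again $L^2\hookrightarrow W^{-1,p^\prime}$. Adding the two bounds gives the claim.

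The argument is essentially routine, so I do not expect a genuine obstacle; the two points that require a little care are the choice of Hölder exponents in the last display, where integrability of $|t-s|^{-\alpha p}$ over $[0,T]^2$ forces precisely the restriction $\alpha<1/p$ in the statement, and the (easy but essential) observation that $h_\tau$ is bounded in $L^{p^\prime}(0,T;\R)$—not merely in $L^2(0,T;\R)$—uniformly in $\tau$, which is where $p^\prime<2$ is used.
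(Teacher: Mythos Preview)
Your proof is correct and follows essentially the same route as the paper's: both rewrite $\tilde{\tt B}_\tau(t)-\tilde{\tt B}_\tau(s)$ as the integral over $(s,t)$ of ${\rm div}_x{\tt A}(\nabla u_\tau)+\sigma(\hat u_\tau)h_\tau$ and then apply H\"older in time to produce a factor $|t-s|^{1/p}$, which after raising to the $p$-th power makes the Gagliardo integral finite precisely when $\alpha<1/p$. The only cosmetic difference is that the paper splits the integrand into the drift piece $\mathcal{K}_1^\tau$ and the forcing piece $\mathcal{K}_2^\tau$ and estimates them separately (getting the sharper exponent $(t-s)^{p/2}$ for the latter via the $L^2$-norm), whereas you treat them together through a single $L^{p^\prime}(0,T;W^{-1,p^\prime})$ bound on $\partial_t\tilde{\tt B}_\tau$; since the restriction $\alpha<1/p$ is forced by the drift anyway, your packaging is slightly more direct with no loss.
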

\begin{proof}
By Sobolev embedding $W_{0}^{1,p} \hookrightarrow W^{-1,p'}$, and Lemma \ref{lem:a-priori:2},  we get
\begin{align}
 \int_{0}^{T} \left \|\tilde{\tt B}_{\tau}  \right \|_{W^{-1,p'}}^{p} \, dt  \leq C \int_{0}^{T} \left \|\tilde{\tt B}_{\tau}  \ \right \|_{W_{0}^{1,p}}^{p} \, dt  \leq C. \label{esti:1-frac-sov}
 \end{align}
 To proceed further, we rewrite \eqref{SI1}  as
\begin{align} \label{eq:cpt000}
    \tilde{\tt B}_{\tau}(t) &= {\tt B}(\bar{u}_{0}) + \int_{0}^{t} {\rm div}_x {\tt A}(\nabla {u}_{\tau}) \, ds
    + \int_{0}^{t} \sigma(\hat{u}_{\tau}){h}_{\tau}(s) \, ds 
     \equiv \mathcal{K}_{0}^\tau(t) + \mathcal{K}_{1}^{\tau}(t) + \mathcal{K}_{2}^{\tau}(t) \,.
\end{align}
In view  of \eqref{esti:1-frac-sov}, it remains to show  that $\mathcal{K}_{i}^{\tau}(t)$ satisfies \eqref{norm def1} for $0 \leq i \leq 2$ and $\alpha \in (0,\frac{1}{p})$.
Since $\mathcal{K}_{0}^{\tau}(t)$ is independent of time, clearly it satisfies \eqref{norm def1} for any  $\alpha \in (0,1)$.
W.L.O.G., we assume that $s < t$. By employing \eqref{A:boundedness} and H\"{o}lder's inequality, one has

\begin{align}
    \left \|\mathcal{K}_{1}^{\tau}(t) - \mathcal{K}_{1}^{\tau}(s)   \right \|_{W^{-1,p'}(D)}^{p}  & \leq \bigg( \int_{s}^{t} \left \| {\rm div}_x {\tt A}(\nabla {u}_{\tau}(r))  \right \|_{W^{-1,p'}(D)}  \, dr  \bigg)^{p} 
      \leq \bigg(\int_{s}^{t} \left \| {\tt A}(\nabla {u}_{\tau}(r))\right \|_{L^{p^\prime}} \, dr \bigg)^{p} \notag \\
      &  \leq  \bigg( C_2 \int_s^t \|\nabla u_{\tau}(r)\|_{L^p}^{p-1}\,dr + (t-s)\|K_2\|_{L^{p^\prime}} \bigg)^{p} \notag \\
      & \le C \left\{ (t-s)^\frac{1}{p}\Big( \int_s^t  \|\nabla u_{\tau}(r)\|_{L^p}^{p}\,dr\Big)^\frac{p-1}{p} + (t-s)\|K_2\|_{L^{p^\prime}} \right\}^{p} \notag \\
      & \le C (t-s) \left\{ \Big( \int_0^T  \|\nabla u_{\tau}(r)\|_{L^p}^{p}\,dr\Big)^{p-1} + T^{p-1} \|K_2\|_{L^{p^\prime}}^p \right\}\,. \notag
   \end{align}
  Thus, by  Lemma \ref{lem:a-priori:2}, there exists a constant $C>0$, independent of $\tau$, such that for any $\alpha \in (0, \frac{1}{p})$, 
\begin{align} \label{eq:cpt01}
   \int_{0}^{T}\int_{0}^{T} \frac{\left \|\mathcal{K}_{1}^{\tau}(t) - \mathcal{K}_{1}^{\tau}(s)   \right \|_{W^{-1,p'}(D)}^{p}}{\left | t-s \right |^{1+\alpha p}} \, dt \, ds \leq C.  
\end{align}
Similarly,  by employing the assumption \ref{A4} and  \eqref{B:reverse-L2} , we have 
\begin{align}
    \left \|\mathcal{K}_{2}^{\tau}(t) - \mathcal{K}_{2}^{\tau}(s)   \right \|_{L^{2}}^{p} &\leq \left \| \int_{s}^{t}  \sigma({u}_{\tau}){h}_{\tau}(r) \, dr  \right \|_{L^{2}}^{p} \leq C \bigg( (t-s) \int_{s}^{t}  \left \| {u}_{\tau}(r) \right \|_{L^2}^{2} \left | {h}_{\tau}(r) \right |^{2} \, dr \bigg)^{\frac{p}{2}}  \notag \\
    &\leq C  \bigg( (t-s) \sup_{0\leq t\leq T}\left \| {u}_{\tau}(t) \right \|_{L^2}^{2} \int_{s}^{t} \left | {h}_{\tau}(r) \right |^{2} \, dr  \bigg)^{\frac{p}{2}} \notag \\
     &\leq C \bigg( (t-s) \sup_{0\leq t\leq T}\left \| {\tt B}(u_{\tau}(t)) \right \|_{L^2}^{2} \int_{s}^{t} \left | {h}_{\tau}(r) \right |^{2} \, dr  \bigg)^{\frac{p}{2}} \notag \\
      &\leq C \bigg( (t-s) \sup_{0\leq t\leq T}\left \| \tilde{\tt B}_{\tau}(t) \right \|_{L^2}^{2} \int_{s}^{t} \left | {h}_{\tau}(r) \right |^{2} \, dr  \bigg)^{\frac{p}{2}} \notag \\
  &  \leq C (t-s)^{\frac{p}{2}} \bigg( \int_{0}^{T} \left | {h}_{\tau}(r) \right |^{2} \, dr  \bigg)^{\frac{p}{2}},  \notag
\end{align}
where in the last inequality, we have used ${\rm (i)}$ of Lemma \ref{lem:a-priori:2}. 
Therefore, by \eqref{esti:proj-time},  we have
\begin{align} \label{eq:cpt03}
   \displaystyle \int_{0}^{T}\int_{0}^{T} \frac{\left \|\mathcal{K}_{2}^{\tau}(t) - \mathcal{K}_{2}^{\tau}(s)   \right \|_{W^{-1,p'}}^{p}}{\left | t-s \right |^{1+\alpha p}} \, dt \, ds \leq C, \quad \forall \, \alpha \in (0,\frac{1}{p}).
\end{align}
Putting the inequalities \eqref{eq:cpt01}-\eqref{eq:cpt03} in \eqref{eq:cpt000}, we arrive at the assertion that
$$ \underset {\tau > 0}\sup \left\| \tilde{\tt B}_{\tau}\right\|_{W^{\alpha,p}([0,T];{W^{-1,p'}})} \leq C, \quad \forall \, \alpha \in (0,\frac{1}{p}), $$
where $C>0$ is a constant, independent of $\tau$.
\end{proof}
As mentioned earlier, we would like to have strong convergence of $\{\tilde{\tt B}_\tau\}$ in some appropriate space. For that, we will use the following well-known lemma.
  \begin{lem} \cite[Theorem 2.1]{fland}
\label{lem:cpt}
Let $\mathbb{X} \subset \mathbb{Y} \subset \mathbb{X^*}$ be Banach spaces, $\mathbb{X}$ and $\mathbb{X^*}$ reflexive, with \\
compact embedding of $\mathbb{X}$ in $\mathbb{Y}$. For any $q \in (1,\infty)$ and $\alpha \in (0,1)$, the  embedding of \\ $L^{q}([0,T];\mathbb{X}) \cap W^{\alpha,q}([0,T];\mathbb{X^*}) $  equipped with natural norm in $L^{q}([0,T];\mathbb{Y})$ is compact.
\end{lem}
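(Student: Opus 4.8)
The result is a fractional-in-time Aubin--Lions--Simon compactness embedding, and the plan is to combine an Ehrling-type interpolation inequality for the triple $\mathbb{X}\subset\mathbb{Y}\subset\mathbb{X}^{*}$ with a Bochner-valued Riesz--Fréchet--Kolmogorov (Simon) compactness criterion in $L^{q}([0,T];\mathbb{X}^{*})$. Fix a bounded sequence $\{u_{n}\}$, say $\|u_{n}\|_{L^{q}([0,T];\mathbb{X})}+\|u_{n}\|_{W^{\alpha,q}([0,T];\mathbb{X}^{*})}\le R$; the goal is to extract a subsequence converging strongly in $L^{q}([0,T];\mathbb{Y})$.

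First I would record the interpolation inequality: since $\mathbb{X}\hookrightarrow\hookrightarrow\mathbb{Y}\hookrightarrow\mathbb{X}^{*}$, a standard contradiction argument (using the compact embedding on one factor and the continuous embedding into $\mathbb{X}^{*}$ to force the limit to vanish) gives, for every $\eta>0$, a constant $C_{\eta}>0$ with $\|v\|_{\mathbb{Y}}\le \eta\|v\|_{\mathbb{X}}+C_{\eta}\|v\|_{\mathbb{X}^{*}}$ for all $v\in\mathbb{X}$. Raising to the power $q$ and integrating in time over $[0,T]$ yields, for all $u,w\in L^{q}([0,T];\mathbb{X})$,
\[ \|u-w\|_{L^{q}([0,T];\mathbb{Y})}\le C\eta\big(\|u\|_{L^{q}([0,T];\mathbb{X})}+\|w\|_{L^{q}([0,T];\mathbb{X})}\big)+C_{\eta}\,\|u-w\|_{L^{q}([0,T];\mathbb{X}^{*})}. \]
Applying this to $u_{n}-u_{m}$ reduces the problem to showing that $\{u_{n}\}$ has a subsequence that is Cauchy in $L^{q}([0,T];\mathbb{X}^{*})$: given $\epsilon>0$ one first picks $\eta$ small enough that $2CR\eta<\epsilon/2$, and then uses the $\mathbb{X}^{*}$-Cauchy property for $n,m$ large; completeness of $L^{q}([0,T];\mathbb{Y})$ then provides the limit.

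The heart of the argument is thus the relative compactness of $\{u_{n}\}$ in $L^{q}([0,T];\mathbb{X}^{*})$, which I would obtain from the Bochner-space Fréchet--Kolmogorov theorem in Simon's form: a bounded family in $L^{q}([0,T];B)$ is relatively compact iff (i) $\big\{\int_{t_{1}}^{t_{2}}u_{n}(t)\,dt\big\}_{n}$ is relatively compact in $B$ for all $0\le t_{1}<t_{2}\le T$, and (ii) $\sup_{n}\|u_{n}(\cdot+\theta)-u_{n}(\cdot)\|_{L^{q}((0,T-\theta);B)}\to 0$ as $\theta\to 0^{+}$. For (i) with $B=\mathbb{X}^{*}$: by Hölder, $\big\|\int_{t_{1}}^{t_{2}}u_{n}(t)\,dt\big\|_{\mathbb{X}}\le(t_{2}-t_{1})^{1/q'}R$, so the averages form a bounded set in $\mathbb{X}$, hence a precompact set in $\mathbb{Y}$ by the compact embedding, hence a precompact set in $\mathbb{X}^{*}$ by the continuous embedding $\mathbb{Y}\hookrightarrow\mathbb{X}^{*}$. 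For (ii): boundedness of $\{u_{n}\}$ in $W^{\alpha,q}([0,T];\mathbb{X}^{*})$ controls the $L^{q}$-modulus of continuity in time — one estimates $\int_{0}^{T-\theta}\|u(t+\theta)-u(t)\|_{\mathbb{X}^{*}}^{q}\,dt$ by $C\theta^{\alpha q}$ times the Gagliardo seminorm of $u$ — so this modulus is bounded by $CR\theta^{\alpha}$ uniformly in $n$. Together with $\|u_{n}\|_{L^{q}([0,T];\mathbb{X}^{*})}\le C\|u_{n}\|_{L^{q}([0,T];\mathbb{X})}\le CR$, Simon's theorem yields the required subsequence, and the reduction step upgrades it to convergence in $L^{q}([0,T];\mathbb{Y})$. (Reflexivity of $\mathbb{X},\mathbb{X}^{*}$ enters only if one additionally wants the limit to lie in the domain space, via weak compactness in $L^{q}([0,T];\mathbb{X})$.)

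I expect step (ii) to be the main obstacle: deriving the sharp time-translation estimate $\|u(\cdot+\theta)-u(\cdot)\|_{L^{q}((0,T-\theta);\mathbb{X}^{*})}\le C\theta^{\alpha}\|u\|_{W^{\alpha,q}([0,T];\mathbb{X}^{*})}$ from the double-integral definition of $W^{\alpha,q}$ requires a careful splitting of the Gagliardo integral near the diagonal, and one must invoke the Bochner-valued (Simon) version of the Fréchet--Kolmogorov criterion rather than the scalar one. The remaining ingredients — the Ehrling inequality, the Hölder bound on the time-averages, and the $\eta$-absorption in the reduction step — are routine.
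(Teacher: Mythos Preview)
The paper does not prove this lemma at all; it is simply quoted verbatim as \cite[Theorem 2.1]{fland} (Flandoli--Gatarek) and then applied as a black box in Theorem~\ref{eq:cgs1} and later in the tightness arguments. So there is no ``paper's own proof'' to compare against.

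That said, your outline is a correct and standard route to this result, essentially the one Flandoli--Gatarek themselves sketch: reduce via Ehrling's inequality to relative compactness in $L^{q}([0,T];\mathbb{X}^{*})$, and then verify Simon's two conditions --- compactness of time-averages (bounded in $\mathbb{X}$, hence precompact in $\mathbb{Y}\hookrightarrow\mathbb{X}^{*}$) and uniform equicontinuity of translates (controlled by the Gagliardo seminorm). Your identification of step~(ii) as the only nontrivial point is accurate: the inequality $\|u(\cdot+\theta)-u(\cdot)\|_{L^{q}((0,T-\theta);\mathbb{X}^{*})}\le C\theta^{\alpha}[u]_{W^{\alpha,q}}$ is genuine but follows by a Hardy-type averaging along the diagonal strip $\{|t-s|\le\theta\}$ of the double integral. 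One small remark: Flandoli--Gatarek's proof actually uses reflexivity of $\mathbb{X}$ to first pass to a weak limit in $L^{q}([0,T];\mathbb{X})$ and then upgrade; your argument avoids this by working directly with the Cauchy property, which is slightly cleaner.
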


Let $ \mathcal{O} = L^{2}(D_T) \cap C([0,T];W^{-1,p'}).$
\begin{thm}
\label{eq:cgs1}
There exists a subsequence of $\{\tilde{\tt B}_\tau\}$, still denoted by $\{\tilde{\tt B}_{\tau}(\cdot)\}$ and \\
${\tt B}_* \in L^{2}(D_T) \, \cap \, L^{p}([0,T];W_{0}^{1,p}) \, \cap \, L^{\infty}([0,T];L^{2})$ such that
\begin{itemize}
    \item[(a)] $\tilde{\tt B}_{\tau} \rightarrow {\tt B}_*$ in $\mathcal{O},$
\item[(b)]  $\tilde{\tt B}_{\tau} \rightharpoonup {\tt B}_* $ in $L^p([0,T];W_0^{1,p}),$
\item[(c)] $\tilde{\tt B}_{\tau} \overset{*}{\rightharpoonup}   {\tt B}_* $ in $L^{\infty}([0,T];L^{2}).$
\item[(d)] ${\tt B}(u_\tau) \goto {\tt B}_*$ in $L^2(D_T)$.
\end{itemize}
\end{thm}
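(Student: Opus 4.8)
The plan is to combine the uniform a-priori estimates of Lemmas~\ref{lem:a-priori:2} and~\ref{lem:cpt0} with the Aubin--Lions--Flandoli compactness lemma (Lemma~\ref{lem:cpt}) and a diagonal extraction of subsequences, and then to identify the various weak/strong limits as one and the same object ${\tt B}_*$. First, from Lemma~\ref{lem:a-priori:2}, the family $\{\tilde{\tt B}_\tau\}$ is bounded in $L^p([0,T];W_0^{1,p})$ and in $L^\infty([0,T];L^2)$; since $W_0^{1,p}$ is reflexive (as $p>2$) and $L^\infty([0,T];L^2) = (L^1([0,T];L^2))^*$ with $L^1([0,T];L^2)$ separable, the Banach--Alaoglu theorem gives a subsequence along which $\tilde{\tt B}_\tau \rightharpoonup {\tt B}_*$ in $L^p([0,T];W_0^{1,p})$ and $\tilde{\tt B}_\tau \overset{*}{\rightharpoonup} {\tt B}_{**}$ in $L^\infty([0,T];L^2)$, and by testing against a common dense set of functions one checks ${\tt B}_* = {\tt B}_{**}$ a.e.; in particular ${\tt B}_* \in L^p([0,T];W_0^{1,p}) \cap L^\infty([0,T];L^2) \subset L^2(D_T)$. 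This gives (b) and (c).

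For (a), I would apply Lemma~\ref{lem:cpt} with the triple $\mathbb{X} = W_0^{1,p} \hookrightarrow \mathbb{Y} = L^2 \hookrightarrow \mathbb{X}^* = W^{-1,p'}$, where the embedding $W_0^{1,p} \hookrightarrow L^2$ is compact by Rellich--Kondrachov (here $D$ bounded and $p>2$). Lemma~\ref{lem:a-priori:2} gives boundedness of $\{\tilde{\tt B}_\tau\}$ in $L^p([0,T];W_0^{1,p})$, and Lemma~\ref{lem:cpt0} gives boundedness in $W^{\alpha,p}([0,T];W^{-1,p'})$ for some $\alpha \in (0,\tfrac1p)$; hence the compact embedding of $L^p([0,T];W_0^{1,p}) \cap W^{\alpha,p}([0,T];W^{-1,p'})$ into $L^p([0,T];L^2)$, and a fortiori into $L^2(D_T)$, yields a further subsequence with $\tilde{\tt B}_\tau \to {\tt B}_*$ strongly in $L^2(D_T)$ (the limit must agree with the weak limit already extracted). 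To upgrade $L^2(D_T)$-convergence to convergence in $\mathcal{O} = L^2(D_T) \cap C([0,T];W^{-1,p'})$, I would use the representation~\eqref{eq:cpt000}: the three pieces $\mathcal{K}_0^\tau, \mathcal{K}_1^\tau, \mathcal{K}_2^\tau$ are each equibounded in $C^{0,\beta}([0,T];W^{-1,p'})$ for a suitable H\"older exponent $\beta$ (the bounds derived inside the proof of Lemma~\ref{lem:cpt0} are in fact modulus-of-continuity estimates, not merely fractional Sobolev ones), so $\{\tilde{\tt B}_\tau\}$ is relatively compact in $C([0,T];W^{-1,p'})$ by Arzel\`a--Ascoli, and passing to a subsequence gives convergence there to a limit that coincides with ${\tt B}_*$.

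Finally, for (d), I would compare ${\tt B}(u_\tau)$ with the affine interpolant $\tilde{\tt B}_\tau$. By construction (Definition~\ref{defi:approximate-skeleton}), on each subinterval $[t_k,t_{k+1})$ one has ${\tt B}(u_\tau(t)) - \tilde{\tt B}_\tau(t) = \tfrac{t_{k+1}-t}{\tau}\big({\tt B}(\bar u_{k+1}) - {\tt B}(\bar u_k)\big)$, so
\begin{align*}
\int_0^T \|{\tt B}(u_\tau(t)) - \tilde{\tt B}_\tau(t)\|_{L^2}^2\,dt \le \tau \sum_{k=0}^{N-1} \|{\tt B}(\bar u_{k+1}) - {\tt B}(\bar u_k)\|_{L^2}^2 \le C\tau \to 0
\end{align*}
by the middle term in~\eqref{a-prioriestimate:1}. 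Hence ${\tt B}(u_\tau) \to {\tt B}_*$ in $L^2(D_T)$, which is~(d). The main obstacle is bookkeeping rather than depth: one must make sure all the weak, weak-$*$, and strong limits extracted along different subsequences are identified with a single ${\tt B}_*$ (done by uniqueness of limits in the weakest topology, namely distributions on $D_T$), and one must confirm that the estimates in Lemma~\ref{lem:cpt0} genuinely control the modulus of continuity in $C([0,T];W^{-1,p'})$ uniformly in $\tau$ so that Arzel\`a--Ascoli applies for part~(a); the handling of the stochastic/controlled term $\mathcal{K}_2^\tau$ is routine here since it is only a Bochner integral against $h_\tau$, with no It\^o term present in the skeleton setting.
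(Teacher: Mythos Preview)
Your proposal is correct and follows essentially the same route as the paper: both use the Aubin--Lions--Flandoli compactness (Lemma~\ref{lem:cpt}) together with the H\"older-continuity estimates implicit in the proof of Lemma~\ref{lem:cpt0} and Arzel\`a--Ascoli for part~(a), reflexivity/Banach--Alaoglu for (b) and (c), and the direct increment estimate from~\eqref{a-prioriestimate:1} for~(d). The only cosmetic differences are the order in which the parts are proved and the paper's slightly roundabout identification of the weak limit in~(b) via $L^2([0,T];W_0^{1,2})$; your direct use of Banach--Alaoglu in $L^p([0,T];W_0^{1,p})$ and $L^\infty([0,T];L^2)$ is if anything cleaner.
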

\begin{proof}
\text{Proof of (a).}  
It is easy to view from  Lemma \ref{lem:cpt}  that $ L^{p}([0,T];W_{0}^{1,p}) \, \cap \, W^{\alpha,p}([0,T];{W^{-1,p'}})$ is compactly embedded in $L^{2}([0,T];L^{2})$. In view of Lemmas \ref{lem:a-priori:2} and  \ref{lem:cpt0} together with Lemma \ref{lem:cpt}, we see that $\left \{ \tilde{\tt B}_{\tau} \right \}$ is pre-compact in $L^{2}(D_T)$. Moreover, the proof of Lemma \ref{lem:cpt0} yields that there exists $\beta>0$ such that 
\begin{align*}
\|\tilde{\tt B}_\tau(t)-\tilde{\tt B}_\tau(s)\|_{W^{-1,p'}(D)} \le C |t-s|^\beta \,.
\end{align*}
 Furthermore, by the compact embedding of $L^{2} \hookrightarrow W^{-1,p'}$ together with the first part of ${\rm (i)}$, Lemma  \ref{lem:a-priori:2}, one can easily see that the family $\{\tilde{\tt B}_\tau\}$ is uniformly bounded in $W^{-1,p'}$. Hence  Arzel$\acute{a}$–Ascoli theorem yields that the family  $\left \{ \tilde{\tt B}_{\tau} \right \}$ is pre-compact in $C([0,T];W^{-1,p'})$. Hence the assertion follows. 
\vspace{0.2cm}

\noindent{ Proof of ${\rm (b)}$.} We use the estimate ${\rm (ii)}$ of Lemma  \ref{lem:a-priori:2}, and $p > 2$ to obtain that, there exists a subsequence of $\{\tilde{\tt B}_\tau\}$, still denoted by $\{\tilde{\tt B}_{\tau}\}$, and $Z \in L^p([0,T];W_0^{1,p})$  such that $\tilde{\tt B}_{\tau}\rightharpoonup Z $ in $L^{2}([0,T];W_0^{1,2})$
i.e.$$\int_{0}^{T} \int_{D} \tilde{\tt B}_{\tau}(t,x) \psi(t,x) \, dx \, dt  \rightarrow \int_{0}^{T} \int_{D} Z(t,x) \psi(t,x) \, dx \, dt  \quad \forall \, \psi \in L^{2}([0,T];W^{-1,2}(D)). $$
Note that $L^{2} \hookrightarrow W^{-1,2}$, so in particular $\tilde{\tt B}_{\tau} \rightharpoonup Z $ in $L^{2}([0,T];L^{2})$. By part $(a)$, we have seen that $\tilde{\tt B}_{\tau}(t) \rightharpoonup {\tt B}_* $ in $L^{2}(D_T)$, and hence  the assertion follows thanks to the uniqueness of weak limit.
\vspace{0.2cm}

\noindent{Proof of ${\rm (c)}$.} One can follow the similar arguments (under cosmetic changes) as in the proof of \cite[Lemma 17(6)]{gv1}, see also \cite[Lemma 3.8(iii)]{Majee2020} to conclude the result ${\rm (c)}$. 

\vspace{0.2cm}

\noindent{Proof of ${\rm (d)}$.} Observe that 
\begin{align*}
   \int_{0}^T \|{\tt B}_{\tau}^*(t)- {\tt B}(u_{\tau}(t))\|_{L^2}^2\,{\rm d}t &
 =  \sum_{k=0}^{N-1}  \|{\tt B}(\bar{u}_{k+1})-{\tt B}(\bar{u}_k)\|_{L^2}^2  \int_{t_k}^{t_{k+1}} \Big( 1- \frac{t-t_k}{\tau}\Big)^2\,{\rm d}t\Big]  \\
 &= \frac{\tau}{3}   \sum_{k=0}^{N-1} \| {\tt B}(\bar{u}_{k+1})-{\tt B}(\bar{u}_k)\|_{L^2}^2 \le C \tau\,, 
\end{align*}
where in the last inequality, we have used \eqref{a-prioriestimate:1}.  Hence the result follows from  the triangle inequality and part ${\rm (a)}$. 
\end{proof}
Observe that, since ${\tt B}^{-1}: L^2(D_T)\goto L^2(D_T)$ is Lipschitz continuous function, the function $u_*$ given by 
$$u_*(t,x):={\tt B}^{-1}{\tt B}_*(t,x)$$
is well-defined and $u_* \in L^2(D_T)$. Moreover, 
\begin{align}
\sup_{0\le t\le T} \|u_*(t)\|_{L^2}^2\le C\,. \label{esti:sup-l2-limit-fun-skeleton}
\end{align}
 We show that $u_*$ is indeed a solution of \eqref{eq:skeleton}. One may arrive at the following convergence results.
\begin{lem}\label{lem:con-skeleton-final-1}
The following hold:
\begin{itemize}
\item[i)] $ \tilde{u}_\tau \goto u_*,~ \hat{u}_\tau \goto u_*$ and $u_\tau \goto u_*$ in $L^2(D_T)$,
\item[ii)] $ \nabla u_\tau \rightharpoonup \nabla u_*$, and $ \nabla {\tt B}( u_\tau) \rightharpoonup \nabla {\tt B}_*$  in $L^p(D_T)^d$.
\end{itemize}
\end{lem}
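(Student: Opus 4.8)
The plan is to prove Lemma~\ref{lem:con-skeleton-final-1} by leveraging the strong convergence $\tilde{\tt B}_\tau \to {\tt B}_*$ in $L^2(D_T)$ from Theorem~\ref{eq:cgs1}(a), the Lipschitz continuity of ${\tt B}^{-1}$, and the uniform a-priori bounds of Lemma~\ref{lem:a-priori:2}, together with the fact that the various time-interpolants differ from one another only through the quantities $\|{\tt B}(\bar u_{k+1})-{\tt B}(\bar u_k)\|_{L^2}^2$ summed over $k$, which is controlled by \eqref{a-prioriestimate:1}.

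\textbf{Step 1: strong convergence of the interpolants in $L^2(D_T)$.} First I would note that part~(d) of Theorem~\ref{eq:cgs1} already gives ${\tt B}(u_\tau)\to {\tt B}_*$ in $L^2(D_T)$; applying the Lipschitz map ${\tt B}^{-1}$ (see (III), estimate \eqref{B:reverse-L2}, pointwise \eqref{B:reverse-poinwise}) and recalling $u_*={\tt B}^{-1}{\tt B}_*$ yields $u_\tau\to u_*$ in $L^2(D_T)$. For $\hat u_\tau$, the key observation is that $\hat u_\tau$ and $u_\tau$ are built from $\{\bar u_k\}$ and $\{\bar u_{k+1}\}$ respectively, so
\[
\int_0^T \|{\tt B}(u_\tau(t))-{\tt B}(\hat u_\tau(t))\|_{L^2}^2\,dt \le \tau\Big(\|{\tt B}(\bar u_0)\|_{L^2}^2 + \sum_{k=0}^{N-1}\|{\tt B}(\bar u_{k+1})-{\tt B}(\bar u_k)\|_{L^2}^2\Big)\le C\tau \to 0,
\]
using \eqref{a-prioriestimate:1} (and the shift by one index); hence ${\tt B}(\hat u_\tau)\to{\tt B}_*$ in $L^2(D_T)$ and, again by the Lipschitz property of ${\tt B}^{-1}$, $\hat u_\tau\to u_*$ in $L^2(D_T)$. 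For the affine interpolant $\tilde u_\tau$, I would first handle $\tilde{\tt B}_\tau$: since $\tilde{\tt B}_\tau\to{\tt B}_*$ strongly in $L^2(D_T)$ by Theorem~\ref{eq:cgs1}(a) and $\tilde{\tt B}_\tau$ differs from ${\tt B}(\tilde u_\tau)$ only by the interpolation remainder — but here one must be careful since ${\tt b}$ is nonlinear, so $\tilde{\tt B}_\tau\ne {\tt B}(\tilde u_\tau)$ in general. Instead I would compare $\tilde u_\tau$ directly with $u_\tau$: on $[t_k,t_{k+1})$ one has $\tilde u_\tau(t)-u_\tau(t)=\big(\tfrac{t-t_k}{\tau}-1\big)(\bar u_{k+1}-\bar u_k)$, so
\[
\int_0^T\|\tilde u_\tau(t)-u_\tau(t)\|_{L^2}^2\,dt=\frac{\tau}{3}\sum_{k=0}^{N-1}\|\bar u_{k+1}-\bar u_k\|_{L^2}^2\le \frac{\tau}{3C_3^2}\sum_{k=0}^{N-1}\|{\tt B}(\bar u_{k+1})-{\tt B}(\bar u_k)\|_{L^2}^2\le C\tau,
\]
where I used \eqref{B:reverse-L2} applied to the difference (or \eqref{B:reverse-poinwise} pointwise) to convert $\|\bar u_{k+1}-\bar u_k\|_{L^2}$ into $\|{\tt B}(\bar u_{k+1})-{\tt B}(\bar u_k)\|_{L^2}$, and then \eqref{a-prioriestimate:1}. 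Combining with $u_\tau\to u_*$ gives $\tilde u_\tau\to u_*$ in $L^2(D_T)$, proving (i).

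\textbf{Step 2: weak convergence of the gradients.} From Lemma~\ref{lem:a-priori:2}, the sequences $\{\nabla u_\tau\}$ and $\{\nabla {\tt B}(u_\tau)\}$ are bounded in $L^p(D_T)^d$ (the latter via \eqref{esti:operator-B-grad}). Since $L^p(D_T)^d$ is reflexive, along a subsequence $\nabla u_\tau \rightharpoonup \chi$ and $\nabla{\tt B}(u_\tau)\rightharpoonup \eta$ weakly in $L^p(D_T)^d$. Because $u_\tau \to u_*$ strongly in $L^2(D_T)$ (hence in $\mathcal D'(D_T)$), the distributional gradient identifies $\chi=\nabla u_*$; the uniform $L^p$ bound on $\nabla u_\tau$ then upgrades this to $u_*\in L^p(0,T;W_0^{1,p})$ and $\nabla u_\tau\rightharpoonup \nabla u_*$. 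Likewise ${\tt B}(u_\tau)\to {\tt B}_*$ strongly in $L^2(D_T)$ by Theorem~\ref{eq:cgs1}(d), so $\eta=\nabla {\tt B}_*$ and $\nabla{\tt B}(u_\tau)\rightharpoonup \nabla{\tt B}_*$ in $L^p(D_T)^d$. Since the weak limits are uniquely determined, the full sequence (not just a subsequence) converges, giving (ii).

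\textbf{Main obstacle.} The routine-looking part — strong $L^2$ convergence of all three interpolants — hinges on the single uniform estimate $\tau\sum_k\|{\tt B}(\bar u_{k+1})-{\tt B}(\bar u_k)\|_{L^2}^2\le C$ from \eqref{a-prioriestimate:1} together with the inversion inequalities \eqref{B:reverse-L2}/\eqref{B:reverse-poinwise} needed to pass from increments of ${\tt B}(\bar u_k)$ to increments of $\bar u_k$; the one genuine subtlety I anticipate is that $\tilde{\tt B}_\tau$ is the affine interpolant of $\{{\tt B}(\bar u_k)\}$ and is \emph{not} equal to ${\tt B}(\tilde u_\tau)$ because ${\tt b}$ is nonlinear, so one cannot directly deduce strong convergence of $\tilde u_\tau$ from that of $\tilde{\tt B}_\tau$ via ${\tt B}^{-1}$ — the clean route is to compare $\tilde u_\tau$ with the step function $u_\tau$ (for which ${\tt B}(u_\tau)\to{\tt B}_*$ is available directly) rather than going through $\tilde{\tt B}_\tau$. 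Everything else is bookkeeping with reflexivity and uniqueness of weak/distributional limits.
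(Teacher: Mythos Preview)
Your proof is correct and follows essentially the same strategy as the paper: strong $L^2(D_T)$ convergence via the increment bound \eqref{a-prioriestimate:1} plus the Lipschitz property of ${\tt B}^{-1}$, and weak $L^p$ convergence of gradients via reflexivity and uniqueness of limits. Your treatment of $\tilde u_\tau$ is in fact slightly more direct than the paper's---you compare $\tilde u_\tau$ with $u_\tau$ at the level of $u$ rather than first passing through ${\tt B}(\tilde u_\tau)-{\tt B}(u_\tau)$ and then inverting---and your observation that $\tilde{\tt B}_\tau\ne {\tt B}(\tilde u_\tau)$ is a valid subtlety that the paper's route also implicitly avoids.
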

\begin{proof}
\noindent{Proof of ${\rm i)}$:} In view of Lipschitz continuity property of ${\tt B}$, \eqref{B:reverse-poinwise} and definitions of $\tilde{u}_\tau, \hat{u}_\tau$ and $u_\tau$, one can easily get
\begin{align}\label{inq:diff-B-app-solun}
\begin{cases}
\displaystyle \int_0^T \|{\tt B}(\tilde{u}_\tau(t))- {\tt B}(u_\tau(t))\|_{L^2}^2\,dt \le C \tau\,, \\
\displaystyle \int_0^T \|{\tt B}(\hat{u}_\tau(t))- {\tt B}(u_\tau(t))\|_{L^2}^2\,dt \le C \tau\,.
\end{cases}
\end{align}
We use Lipschitz continuity of ${\tt B}^{-1}$, definition of $u_*$ together with triangle inequality, \eqref{inq:diff-B-app-solun} and ${\rm (d)}$ of Theorem 
\ref{eq:cgs1} to have
\begin{align}
\int_0^T \|\tilde{u}_\tau(t)-u_*(t)\|_{L^2}^2\,dt &= \int_0^T \| {\tt B}^{-1} {\tt B}(\tilde{u}_\tau(t))- {\tt B}^{-1} {\tt B}_*(t)\|_{L^2}^2\,dt \notag \\
& \le C  \int_0^T \|  {\tt B}(\tilde{u}_\tau(t))-  {\tt B}_*(t)\|_{L^2}^2\,dt \notag \\
& \le C \Big( \tau + \int_0^T  \|  {\tt B}({u}_\tau(t))-  {\tt B}_*(t)\|_{L^2}^2\,dt \Big)\goto 0 \quad \text{as $\tau \goto 0$}. \notag
\end{align}
In a similar way, we can show that $\hat{u}_\tau \goto u_*$ and $u_\tau \goto u_*$ in $L^2(D_T)$. 

\vspace{0.1cm}

\noindent{Proof of ${\rm ii)}$:} Observe that  the sequence $\{u_\tau\}$ is bounded in
$L^p(0,T; W_0^{1,p})$ (cf.~${\rm (i)}$, Lemma \ref{lem:a-priori:2}) and hence there exists
a function $g \in L^p(0,T; W_0^{1,p})$ such that $u_\tau  \rightharpoonup  g$ in  $ L^p(0,T; W_0^{1,p})$. Since
 $u_\tau \goto u_*$ in 
 $L^2(D_T)$ and  $L^p(0,T; W_0^{1,p})$ is continuously embedded in  $L^2(D_T)$, one may easily identify the function $g$ as
$g= u_*$. Therefore, $ \nabla u_\tau \rightharpoonup \grad u_*$ in $L^p(D_T)^d$.  A similar argument reveals that $ \nabla {\tt B}( u_\tau) \rightharpoonup \nabla {\tt B}_*$  in $L^p(D_T)^d$. This completes the proof. 
\end{proof}
%%%%%%%%%%%%%%%%%%%%%%%%%%%%%%%%%%%%%%%%%%%%%%%%%
\begin{lem}
\label{eq:cgseq1}
For any $\phi \in W_0^{1,p}$ and $\xi \in \mathcal{D}(0,T)$, the followings hold:
\begin{itemize}
    \item[(i)]  $\displaystyle \underset{\tau \rightarrow 0}\lim \int_{0}^{T} \int_D \big( \sigma(\hat{u}_{\tau}(t)){h}_{\tau}(t) - \sigma(u_*(t))h(t)\big)\,\phi(x)\xi(t)\,dx \, dt = 0.$
    \item[(ii)] There exists $ \vec{F} \in L^{p'}(D_T)^{d}$ such that
  $$\displaystyle \underset{\tau \rightarrow 0}\lim \int_{0}^{T}\int_D \big({\tt A}(\nabla u_\tau(t))-\vec{F}\big)\cdot \nabla \phi(x) \xi(t)\,dx dt = 0.$$
  \item[iii)] For all $t\in [0,T]$, $u_\tau(t) \rightharpoonup  u_*(t)$ in $L^2$. Moreover, $u_*$ solves the equation
\begin{align}
 {\tt B}(u_*(t))= {\tt B}(u_0) +  \int_{0}^{t}  {\rm div}_x \vec{F}(s) ds + \int_{0}^{t} \sigma(u_*(s))h(s) ds ~~~\text{in $L^2$ for all $t\in [0,T]$}. \label{eq:weak-form-limit-function-skeleton}
 \end{align}
 \end{itemize}
\end{lem}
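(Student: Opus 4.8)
The plan is to verify the three assertions by combining the uniform a-priori bounds of Lemmas \ref{lem:a-priori:2} and \ref{lem:cpt0}, the convergences of Theorem \ref{eq:cgs1} and Lemma \ref{lem:con-skeleton-final-1}, together with Lipschitz continuity of $\sigma$, ${\tt B}$ and ${\tt B}^{-1}$ and the properties \eqref{est:SI1}, \eqref{esti:proj-time} of the time projection.

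\textbf{Proof of (i).} First I would split the integrand as $\sigma(\hat u_\tau)h_\tau-\sigma(u_*)h = \big(\sigma(\hat u_\tau)-\sigma(u_*)\big)h_\tau + \sigma(u_*)\big(h_\tau-h\big)$. For the first term, Lipschitz continuity of $\sigma$ (assumption \ref{A4}) and H\"older's inequality give the bound $C\|\hat u_\tau-u_*\|_{L^2(D_T)}\,\|h_\tau\|_{L^2(0,T)}\,\|\phi\|_{L^2}\|\xi\|_{L^\infty}$, which tends to $0$ because $\hat u_\tau\to u_*$ in $L^2(D_T)$ by Lemma \ref{lem:con-skeleton-final-1}(i) while $\|h_\tau\|_{L^2(0,T)}$ is bounded by \eqref{esti:proj-time}. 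For the second term, I would note that $\sigma(u_*)\phi\xi\in L^2(D_T)$, that $h_\tau=\Pi_\tau h$ pointwise in the appropriate sense, and that $h_\tau\to h$ strongly in $L^2(0,T;\R)$ by \eqref{est:SI1} (combined with $h_\tau(T)=h_N$, a single point of measure zero); hence $\int_0^T\int_D \sigma(u_*)\phi\xi\,(h_\tau-h)\,dx\,dt\to 0$ by the Cauchy--Schwarz inequality in time. This proves (i).

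\textbf{Proof of (ii).} The estimate (ii) of Lemma \ref{lem:a-priori:2} says $\{{\tt A}(\nabla u_\tau)\}$ is bounded in $L^{p'}(D_T)^d$, a reflexive space, so up to a subsequence ${\tt A}(\nabla u_\tau)\rightharpoonup \vec F$ weakly in $L^{p'}(D_T)^d$ for some $\vec F\in L^{p'}(D_T)^d$. Testing this weak convergence against $\nabla\phi(x)\xi(t)\in L^p(D_T)^d$ gives the claimed limit. (I note that the identification $\vec F={\tt A}(\nabla u_*)$ is deliberately postponed; it will be carried out later by a Minty-type monotonicity argument, which is why only the weak limit is extracted here.)

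\textbf{Proof of (iii).} For the first claim I would use that, by Theorem \ref{eq:cgs1}(a), $\tilde{\tt B}_\tau\to {\tt B}_*$ in $C([0,T];W^{-1,p'})$, so $\tilde{\tt B}_\tau(t)\to{\tt B}_*(t)$ in $W^{-1,p'}$ for each $t$; since $\tilde{\tt B}_\tau(t)={\tt B}(\bar u_{k+1})$-type elements are bounded in $L^2$ (first part of Lemma \ref{lem:a-priori:2}(i)), a standard argument gives $\tilde{\tt B}_\tau(t)\rightharpoonup{\tt B}_*(t)={\tt B}(u_*(t))$ in $L^2$ for every $t$, whence $u_\tau(t)={\tt B}^{-1}{\tt B}(u_\tau(t))\rightharpoonup u_*(t)$ in $L^2$ using the pointwise relation between $u_\tau$ and $\tilde{\tt B}_\tau$ and the monotone/weakly continuous nature of ${\tt B}^{-1}$ on bounded $L^2$-sets (together with $\|\tilde{\tt B}_\tau-{\tt B}(u_\tau)\|\to 0$ from Theorem \ref{eq:cgs1}). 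To derive \eqref{eq:weak-form-limit-function-skeleton}, I would pass to the limit in the rewritten scheme \eqref{eq:cpt000}: the left-hand side $\tilde{\tt B}_\tau(t)\to{\tt B}(u_*(t))$ in $W^{-1,p'}$ (hence weakly in $L^2$, uniformly in $t$); $\mathcal K_0^\tau(t)={\tt B}(\bar u_0)\to{\tt B}(u_0)$ in $L^2$ by \eqref{eq:1.1}; the drift term $\int_0^t{\rm div}_x{\tt A}(\nabla u_\tau)\,ds\to\int_0^t{\rm div}_x\vec F\,ds$ in the distributional sense via part (ii) (tested against $\phi\xi$, for every $t$ after integration by parts in time or directly since the convergence is in $C([0,T];W^{-1,p'})$-weak); and the stochastic/control term $\int_0^t\sigma(\hat u_\tau)h_\tau\,ds\to\int_0^t\sigma(u_*)h\,ds$ weakly in $L^2$ by part (i). Collecting these identifies ${\tt B}(u_*(t))$ with the right-hand side of \eqref{eq:weak-form-limit-function-skeleton} in $W^{-1,p'}$; since both sides lie in $L^2$ (using \eqref{esti:sup-l2-limit-fun-skeleton} and the regularity of $\vec F$ and $\sigma(u_*)h$, plus \cite[Lemma $4.1$]{Pardoux1974-75}-type continuity), the identity holds in $L^2$ for all $t\in[0,T]$.

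The main obstacle I anticipate is handling the controlled drift/noise term $\int_0^t\sigma(\hat u_\tau)h_\tau\,ds$ in part (i) and its use in (iii): one cannot pass to the limit in the product $\sigma(\hat u_\tau)\,h_\tau$ by strong--weak convergence alone because $h_\tau$ converges only weakly a priori; the resolution is precisely that \eqref{est:SI1} upgrades $h_\tau=\Pi_\tau h\to h$ to \emph{strong} $L^2(0,T;\R)$ convergence, which, paired with the strong $L^2(D_T)$ convergence $\hat u_\tau\to u_*$, makes the product pass to the limit — this is the delicate point that the discretization of $h$ was designed to accommodate. Everywhere else the arguments are routine applications of reflexive weak compactness, Lipschitz bounds, and the already-established strong convergences.
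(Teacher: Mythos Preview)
Your proposal is correct and follows essentially the same route as the paper for parts (i) and (ii); for (iii) the paper first derives the distributional identity $\frac{d{\tt B}_*}{dt}={\rm div}_x\vec F+\sigma(u_*)h$ in $L^{p'}(0,T;W^{-1,p'})$ and then integrates, whereas you pass to the limit directly in the integrated scheme \eqref{eq:cpt000}, which is an equally valid and slightly more direct variant. One caution: the phrase ``monotone/weakly continuous nature of ${\tt B}^{-1}$ on bounded $L^2$-sets'' is not a valid justification, since nonlinear Nemytskii operators are in general \emph{not} weakly sequentially continuous; what you (and the paper) actually establish is $\tilde{\tt B}_\tau(t)\rightharpoonup{\tt B}_*(t)={\tt B}(u_*(t))$ in $L^2$ for every $t$, and this is all that is used in the subsequent identification of $\vec F$, so the imprecision is harmless.
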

\begin{proof}
\noindent{Proof of (i):} Using  the convergence of $h_{\tau} \rightarrow h$ in $L^2(0,T;\R)$,  ${\rm i)}$ of Lemma \ref{lem:con-skeleton-final-1}  and  the assumption \ref{A4}, it is easy to see that for any $\phi \in W_0^{1,p}$ and $\xi \in \mathcal{D}(0,T)$
\begin{align*}
    & \left |\int_{0}^{T} \int_D \big( \sigma(\hat{u}_{\tau}(t)){h}_{\tau}(t) - \sigma(u_*(t))h(t)\big)\,\phi(x)\xi(t)\,dx \, dt\right | \\
          & \leq \int_{0}^{T} \int_D \left|\big(\sigma(\hat{u}_{\tau}(t)) - \sigma(u_*(t)) \big) {h}_{\tau}(t) , \phi(x) \xi(t) \right| \, dt\,dx + \int_{0}^{T}\int_D \left|  \big(h_{\tau}(t) - h(t) \big)\sigma(u_*(t)) \phi(x)\xi(t)  \right| \,dx\, dt \\
     & \leq C ||\phi||_{W_0^{1,p}}\left(  \int_{0}^{T}   ||\hat{u}_{\tau}(t)-u_*(t)||_{L^2} |h_{\tau}(t)| \,  \, dt + \int_{0}^{T}  |h_{\tau}(t)-h(t)| \,  ||u_*(t)||_{L^2} \, dt \right)\\
     & \leq C \bigg(\int_{0}^{T} ||\hat{u}_{\tau}(t)-u_*(t)||_{L^2}^{2} \, dt \bigg)^{\frac{1}{2}} \bigg(\int_{0}^{T} |h_{\tau}(t)|^2 dt \bigg)^{\frac{1}{2}} \notag \\
     & \hspace{2cm} + C \bigg(\int_{0}^{T} |h_{\tau}(t)-h(t)|^2 dt \bigg)^{\frac{1}{2}} \bigg(\int_{0}^{T} ||u_*(t)||_{L^2}^2 \, dt \bigg)^{\frac{1}{2}} \\
     & \leq C \bigg(\int_{0}^{T} ||\hat{u}_{\tau}(t)-u_*(t)||_{L^2}^{2} \, dt \bigg)^{\frac{1}{2}} + C \bigg(\int_{0}^{T} |h_{\tau}(t)-h(t)|^2 dt \bigg)^{\frac{1}{2}} \rightarrow 0.
\end{align*}

\noindent{Proof of ${\rm (ii)}$}: In view of ${\rm ii)}$, Lemma \ref{lem:a-priori:2}, we note that the sequence $\{ {\tt A}(\nabla u_\tau)\}$ is bounded in $ L^{p'}(D_T)^{d}$, and hence there exists $ \vec{F} \in L^{p'}(D_T)^{d}$ such that such that  $ {\tt A}(\nabla u_\tau) \rightharpoonup \vec{F} $ in $ L^{p'}(D_T)^{d}$ as $\tau \rightarrow 0$. Therefore, for any $\phi \in W_0^{1,p}$ and $\xi \in \mathcal{D}(0,T)$, 
 $$\displaystyle \underset{\tau \rightarrow 0}\lim \int_{0}^{T}\int_D \big({\tt A}(\nabla u_\tau(t))-\vec{F}\big)\cdot \nabla \phi(x) \xi(t)\,dx\, dt = 0.$$
 \noindent{Proof of ${\rm (iii)}$}: For any $\xi \in \mathcal{D}(0,T)$, we have from \eqref{variational_formula_discrete}
 \begin{align*}
& - \int_0^T \int_D \tilde{\tt B}_\tau(t) \phi(x) \xi^\prime(t)\,dx\,dt 
 = - \sum_{k=0}^{N-1} \int_D \int_{t_k}^{t_{k+1}}  \tilde{\tt B}_\tau(t) \phi(x) \xi^\prime(t)\,dx\,dt  \\
& = \sum_{k=0}^{N-1} \int_D \int_{t_k}^{t_{k+1}}  \frac{d}{dt}\tilde{\tt B}_\tau(t) \phi(x) \xi(t)\,dx\,dt  
 = \sum_{k=0}^{N-1} \int_D \int_{t_k}^{t_{k+1}}  \frac{{\tt B}(\bar{u}_{k+1})-{\tt B}(\bar{u}_{k}) }{\tau}
 \phi(x) \xi(t)\,dx\,dt  \\
 &= \int_0^T \int_D \sigma(\hat{u}_\tau) h_\tau(t) \phi(x) \xi(t)\,dx\,dt - 
  \int_0^T \int_D {\tt A}(\nabla {u}_\tau) \cdot \nabla \phi(x) \xi(t)\,dx\,dt \,.
 \end{align*}
 Passing to the limit in the above equality, we have, thanks to  ${\rm i)}$-${\rm ii)}$ and ${\rm a)}$ of Theorem \ref{eq:cgs1}
 \begin{align*}
 -\int_0^T\int_D {\tt B}_*(t,x) \phi(x) \xi^\prime(t)\,dx\,dt= 
 \int_0^T \int_D \sigma(u_*) h(t) \phi(x) \xi(t)\,dx\,dt - 
  \int_0^T \int_D \vec{F} \cdot \nabla \phi(x) \xi(t)\,dx\,dt \,.
  \end{align*}
  This implies that 
  \begin{align}
  \frac{d {\tt B}_*}{dt}  = {\rm div}_x \vec{F} + \sigma(u_*) h \quad \text{in $L^{p^\prime}(0,T; W^{-1, p^\prime})$}\,. \label{eq: weak-limit-final-1-1}
  \end{align}
  We have seen that ${\tt B}_*$ is weakly continuous with values in $L^2$ and therefore, for any $\xi \in C^\infty([0,t])$ for any fixed 
  $t\in [0,T]$
  \begin{align*}
  \int_0^t \left\langle  \frac{d {\tt B}_*}{dt}, \xi(r) \phi \right \rangle_{W^{-1, p^\prime}, W_0^{1,p}}\,dr
  = -\int_0^t  \int_D{\tt B}_*(r) \xi^\prime(r) \phi(x)\,dx\,dr + \int_D \big( {\tt B}_*(t) \xi(t)- {\tt B}_*(0) \xi(0)\big)\phi(x)\,dx\,.
  \end{align*}
 Hence, using  \eqref{eq: weak-limit-final-1-1}, we have
 \begin{align}
&  \int_0^t  \int_D{\tt B}_*(r) \xi^\prime(r) \phi(x)\,dx\,dr-  \int_D \big( {\tt B}_*(t) \xi(t)- {\tt B}_*(0) \xi(0)\big)\phi(x)\,dx \notag \\
& - \int_0^t \int_D \vec{F}\cdot \nabla \phi(x) \xi(r)\,dx\,dr +  \int_0^t \int_D \sigma(u_*) h(r) \phi(x) \xi(r)\,dx\,dr=0 \notag \\
&=  \int_0^t  \int_D \tilde{\tt B}_\tau(r) \xi^\prime(r) \phi(x)\,dx\,dr-  \int_D \big( \tilde{\tt B}_\tau(t) \xi(t)- {\tt B}(\bar{u}_0) \xi(0)\big)\phi(x)\,dx \notag \\
& - \int_0^t \int_D {\tt A}(\nabla u_\tau)\cdot \nabla \phi(x) \xi(r)\,dx\,dr +  \int_0^t \int_D \sigma(\hat{u}_\tau) h_\tau(r) \phi(x) \xi(r)\,dx\,dr\,. \label{eq: weak-limit-original-equality}
 \end{align}
 Sending the limit as $\tau \goto 0$ in \eqref{eq: weak-limit-original-equality} and using again ${\rm a)}$ of Theorem  \ref{eq:cgs1} and ${\rm i)}$-${\rm ii)}$, we have 
 \begin{align}
 \lim_{\tau \goto 0 }  \int_D \big( \tilde{\tt B}_\tau(t) \xi(t)- {\tt B}(\bar{u}_0) \xi(0)\big)\phi(x)\,dx 
 =\int_D \big( {\tt B}_*(t) \xi(t)- {\tt B}_*(0) \xi(0)\big)\phi(x)\,dx \,. \label{limit-weak-original-skeleton}
 \end{align}
 Using \eqref{eq:1.1} in \eqref{limit-weak-original-skeleton}, we conclude that ${\tt B}_*(0)= {\tt B}(u_0)$ in $L^2$ and for all $t\in [0,T]$,
 $\tilde{\tt B}_\tau(t)  \rightharpoonup {\tt B}_*(t) $ in $L^2$.  Moreover for all $t\in [0,T]$, we have 
 \begin{align*}
 {\tt B}_*(t)= {\tt B}(u_0) +  \int_{0}^{t}  {\rm div}_x \vec{F}(s) ds + \int_{0}^{t} \sigma(u_*(s))h(s) ds\,. 
  \end{align*}
  Thus,  equation \eqref{eq:weak-form-limit-function-skeleton} holds true if we show that ${\tt B}_*(t,x)= {\tt B}(u_*(t,x))$.  Note that, since ${\tt B}$ is Lipschitz continuous and $u_\tau \goto u_*$ in $L^2(D_T)$,
we have ${\tt B}(u_\tau) \goto {\tt B}(u_*)$ in $L^2(D_T)\big)$. We have already seen that  ${\tt B}(u_\tau) \rightharpoonup  {\tt B}_*$
in $ L^p(0,T; W_0^{1,p})$, and ${\tt B}_*\in  \mathcal{C}([0,T]; L^2_w) $. Hence 
$${\tt B}(u_*(t))={\tt B}_*(t) \quad \forall\, t\in [0,T]\,.$$
 This completes the proof.
\end{proof}
%%%%%%%%%%%%%%%%%%%%%%%%%%%%%%%%%%%%%%%%%%%%%%%%%%%

\subsection{Weak solution of Eq. \eqref{eq:skeleton}--existence  proof} \label{subsec:existence-skeleton}
 We show that the limit function $u_*$ is indeed a weak solution of  \eqref{eq:skeleton}. In view of the weak form 
\eqref{eq:weak-form-limit-function-skeleton}, it it is enough to prove that $\vec{F}={\tt A}(\nabla u_*)$.  To do so, we follow \cite{Majee2020,Wittbold2019} of its deterministic counterpart.
Take a test function ${\tt B}(\bar{u}_{k+1})$ in \eqref{SI1} and  use the identity \eqref{eq:identity-0}. The result is 
\begin{align*}
 & \frac{1}{2}\left[||{\tt B}(\bar{u}_{k+1})||_{L^2}^{2} - ||{\tt B}(\bar{u}_{k})||_{L^2}^{2} + || {\tt B}(\bar{u}_{k+1}) - {\tt B}(\bar{u}_{k})||_{L^2}^{2}\right] +  \tau \int_{D} {\tt A}(\nabla \bar{u}_{k+1}) \cdot \nabla {\tt B}(\bar{u}_{k+1}) \, dx \\
   & \hspace{2cm} = \tau \int_{D} \sigma(\bar{u}_{k})h_{k+1} \cdot {\tt B}(\bar{u}_{k+1}) \, dx.
\end{align*}
Summing over $k = 0,1,...,N-1$ and using the definition that $\tilde{\tt B}_{\tau}(T) = {\tt B}(\bar{u}_N)$, we obtain
\begin{align}
     \frac{1}{2}||\tilde{\tt B}_{\tau}(T)||_{L^2}^{2} +  \int_{0}^{T} \int_{D} {\tt b}^\prime(u_{\tau}(t)){\tt A}(\nabla u_{\tau}(t))\cdot \nabla u_{\tau}(t)\,{\rm d}x\,{\rm d}t
      \notag \\
     -  \int_{0}^{T} \int_{D}  \sigma(\hat{u}_{\tau})(t)h_{\tau}(t) {\tt B} (u_{\tau}(t)) \, dx \, dt \leq \frac{1}{2}|| {\tt B}(\bar{u}_{0})||_{L^2}^{2}. \label{eq:discrte-ito-type}
\end{align}
On the other hand, from Lemma \ref{eq:cgseq1}, we get
\begin{align}
  \left \|{\tt B}(u_*(T)) \right \|_{L^2}^{2}  + 2 \int_{0}^{T} \int_{D}   \vec{F} \cdot \nabla {\tt B}(u_*(t)) \, dx \, dt  = \left \| {\tt B}(u_{0}) \right \|_{L^2}^{2} + 2 \int_{0}^{T} \left<\sigma(u_*(t))h(t), {\tt B}(u_*(t)) \right> \, dt .  \label{eq:ito-type}
\end{align}
Subtracting \eqref{eq:ito-type} from \eqref{eq:discrte-ito-type}, one has
\begin{align}
   &  \frac{1}{2}  \left\{||\tilde{\tt B}_{\tau}(T)||_{L^2}^{2}  -   \left \|{\tt B}(u_*(T)) \right \|_{L^2}^{2}\right\}+ \left\{ \int_{D_T} \Big[ {\tt b}^\prime(u_{\tau}(t)){\tt A}(\nabla u_{\tau}(t))\cdot \nabla u_{\tau}(t)-  \vec{F} \cdot \nabla {\tt B}(u_*(t))\Big] \, dx \, dt \right\} \notag \\
     & \leq  \frac{1}{2}\left\{ || {\tt B}(\bar{u}_{0})||_{L^2}^{2}- \left \| {\tt B}(u_{0}) \right \|_{L^2}^{2} \right\} + \left \{ \int_{D_T} \Big[ \sigma(\hat{u}_{\tau})(t)h_{\tau}(t) {\tt B} (u_{\tau}(t)) - \sigma(u_*(t))h(t) {\tt B}(u_*(t))\Big]\,dx\, dt \right\}. \label{esti:vecF-identify-step-1}
\end{align}
Using the fact that for all $t\in [0,T]$,  $\tilde{\tt B}_\tau(t)  \rightharpoonup {\tt B}_*(t)={\tt B}(u_*) $ in $L^2$ and \eqref{eq:1.1}, one easily check that 
\begin{align}
\underset{\tau}\liminf \, ||\tilde{\tt B}_{\tau}(T)||_{L^2}^{2}  -  \left \|{\tt B}(u_*(T)) \right \|_{L^2}^{2} \geq 0, \quad 
  || {\tt B}(\bar{u}_{0})||_{L^2}^{2} \rightarrow  \left \| {\tt B}(u_{0}) \right \|_{L^2}^{2}\,. \label{esti:vecF-identify-step-1-0}
\end{align}
Like in Lemma \ref{lem:a-priori:2}, ${\rm i)}$, one can easily show that $\underset{0\le t\le T} \sup \|{\tt B}(u_\tau(t))\|_{L^2}^2< + \infty$. Using the above uniform estimate, the convergence results in ${\rm i)}$ of Lemma \ref{lem:con-skeleton-final-1}, the uniform estimate \eqref{esti:sup-l2-limit-fun-skeleton} and the fact that
$h_\tau \goto h$ in $L^2(0,T;\R)$, we have
\begin{align}
 \int_{D_T} \sigma(\hat{u}_{\tau})(t)h_{\tau}(t) {\tt B} (u_{\tau}(t))\,dx\,dt  \goto  \int_{D_T}  \sigma(u_*(t))h(t) {\tt B}(u_*(t))\Big]\,dx\, dt \,. \label{esti:vecF-identify-step-1-1}
\end{align}
Using \eqref{esti:vecF-identify-step-1-0}-\eqref{esti:vecF-identify-step-1-1} in \eqref{esti:vecF-identify-step-1}, we obtain
\begin{align}
    \underset{\tau}\limsup  \int_{D_T} {\tt b}^\prime(u_{\tau}(t)){\tt A}(\nabla u_{\tau}(t))\cdot \nabla u_{\tau}(t)\,dx\,dt  \leq  \int_{0}^{T} \int_{D}   \vec{F} \cdot \nabla {\tt B}(u_*(t))\Big] \, dx \, dt \,. \label{esti:vecF-identify-step-2}
\end{align}

Observe that
\begin{align}
 & \int_{D_T} {\tt b}^\prime(u_\tau(t)) {\tt A}(\grad u_{\tau}(t))\cdot \grad u_{\tau}(t)\,{\rm d}x\,{\rm d}t
-   \int_{D_T} \vec{F} \cdot \grad  {\tt B}(u_*) \,{\rm d}x\,{\rm d}t \notag \\
&=    \int_{D_T} {\tt b}^\prime(u_\tau(t)) \big( {\tt A}(\nabla u_\tau(t))-{\tt A}(\nabla u_*(t))\big)\cdot (\nabla u_\tau(t)-\nabla u_*(t)) \,{\rm d}x\,{\rm d}t \notag \\
&  \quad +   \int_{D_T} {\tt b}^\prime(u_\tau(t))\, {\tt A}(\nabla u_*(t))\cdot (\nabla u_\tau(t)-\nabla u_*(t)) \,{\rm d}x\,{\rm d}t \notag \\
&\qquad +   \int_{D_T} {\tt b}^\prime(u_\tau(t))\, \nabla u_*(t) \cdot \big\{ {\tt A}(\nabla u_\tau(t))- \vec{F}\big\} \,{\rm d}x\,{\rm d}t
-  \int_{D_T} {\tt b}^\prime(u_\tau(t))\,  \vec{F} \cdot \big\{ \nabla u_\tau(t)- \nabla u_*(t)\big\} \,{\rm d}x\,{\rm d}t \notag \\
& \qquad \qquad +  \int_{D_T} G \cdot \big\{ \nabla {\tt B}(u_\tau(t))- \nabla {\tt B}(u_*(t))\big\} \,{\rm d}x\,{\rm d}t  := \sum_{i=1}^5 \mathcal{A}_i\,. \notag
\end{align}
We estimate each of the term $\mathcal{A}_i~(1\le i\le 5)$ separately. Since $u_\tau \goto u_*$ in $L^2(D_T)$, the assumption \ref{A3} yields that
 ${\tt b}^\prime(u_\tau) \goto {\tt b}^{\prime}(u_*)$ a.e. in $D_T$.  Moreover, ${\tt b}^\prime(u_\tau)\vec{F} \goto  {\tt b}^{\prime}(u_*) \vec{F}$ a.e. in $ D_T$ and,
 $|{\tt b}^\prime(u_\tau) \vec{F}|^{p^\prime} \in L^1( D_T)$.
  Hence by Lebesgue convergence theorem,
 $$ {\tt b}^\prime(u_\tau)\vec{F} \goto {\tt b}^\prime(u_*) \vec{F} \quad \text{in}~~L^{p^\prime}( D_T)^d\,.$$
Using Lemma \ref{lem:con-skeleton-final-1}, ${\rm ii)}$ and above convergence, we conclude that $\mathcal{A}_4 \goto 0$ as $\tau \goto 0$. 
A similar argument yields that  $\mathcal{A}_2 \goto 0$ as $\tau \goto 0$. 
\vspace{0.1cm}

Next we consider $\mathcal{A}_3$. Observe that  ${\tt b}^\prime(u_\tau) \nabla u_* \goto  {\tt b}^{\prime}(u_*) \nabla u_*$ a.e. in $D_T$, and 
$|{\tt b}^\prime(u_\tau) \nabla u_*|^{p} \in L^1(D_T)$.
 Hence an application of Lebesgue convergence theorem, along with the fact that ${\tt A}(\nabla u_\tau) \rightharpoonup \vec{F} $ in $ L^{p^\prime}( D_T)^d$ implies that  $\mathcal{A}_3 \goto 0$ as $\tau \goto 0$.
 
  Since $\nabla {\tt B}(u_\tau) \rightharpoonup  \grad {\tt B}( u_*)$ in $ L^p( D_T)^d$, and $\vec{F}\in L^{p^\prime}( D_T)^d$, it follows that $ \underset{\tau\goto 0}\lim \mathcal{A}_5=0$.
  
Hence, by using \eqref{esti:vecF-identify-step-2}, we get
\begin{align}
& \limsup_{\tau \goto 0} \int_{D_T} {\tt b}^\prime(u_\tau(t)) \big( {\tt A}(\nabla u_\tau(t))-{\tt A}(\nabla u_*(t))\big)\cdot (\nabla u_\tau(t)-\nabla u_*(t)) \,{\rm d}x\,{\rm d}t \notag \\
& =\limsup_{\tau \goto 0} \int_{D_T} {\tt b}^\prime(u_\tau(t)) {\tt A}(\grad u_{\tau}(t))\cdot \grad u_{\tau}(t)\,{\rm d}x\,{\rm d}t
-  \int_{D_T} \vec{F} \cdot \grad  {\tt B}(u_*) \,{\rm d}x\,{\rm d}t \le 0\,, \label{esti:5-new}
\end{align}
In view of \eqref{A:monotonocity}, the boundedness property of ${\tt b}^\prime$ and \eqref{esti:5-new}, we have
\begin{align*}
0\le  & C_3 \limsup_{\tau \goto 0} \int_{D_T} \big( {\tt A}(\grad u_{\tau})- {\tt A}( \grad u_*)\big)\cdot \big( \nabla u_\tau - \nabla u_*\big) \,{\rm d}x\,{\rm d}t \notag \\
 & \le  \limsup_{\tau \goto 0}  \int_{D_T} {\tt b}^\prime(u_\tau(t)) \big( {\tt A}(\nabla u_\tau(t))-{\tt A}(\nabla u_*(t))\big)\cdot (\nabla u_\tau(t)-\nabla u_*(t)) \,{\rm d}x\,{\rm d}t \le 0\,,
\end{align*}
and hence 
\begin{align}
\lim_{\tau \goto 0} \int_{D_T} \big( {\tt A}(\grad u_{\tau})- {\tt A}( \grad u_*)\big)\cdot \big( \nabla u_\tau - \nabla u_*\big) \,{\rm d}x\,{\rm d}t =0\,. \label{esti:6-new}
\end{align}
Since ${\tt A}(\nabla u_\tau) \rightharpoonup \vec{F}$ in $L^{p^\prime}( D_T)^d$ and  $ \grad u_{\tau}\rightharpoonup \grad u_*$ in $L^p(D_T)^d$, by using \eqref{esti:6-new}, we infer that
\begin{align*}
\lim_{\tau \goto 0}  \int_{D_T} {\tt A}(\grad u_{\tau}) \cdot  \nabla u_\tau \,{\rm d}x\,{\rm d}t=\int_{D_T} \vec{F}\cdot  \nabla u_* \,{\rm d}x\,{\rm d}t\,.
\end{align*}
Since ${\tt A}: L^{p}(D_T)^d \goto L^{p^\prime}( D_T)^d$  fulfills the $(M)$-property, we conclude that
$$ {\tt A}(\nabla u_*)= \vec{F} ~~~\text{in}~~L^{p^\prime}(D_T)^d\,.$$ 
  This completes the existence proof. 
 
 %%%%%%%%%%%%%%%%%%%%%%%%%%%%%%%%%%%%%%%%%%%%%%%%%%%%%
 
\subsection{Proof of uniqueness} \label{sec:uniqueness}
In this subsection, we prove uniqueness of the solution of \eqref{eq:skeleton} via $L^1$-contraction principle. To do so, let $h \in L^2([0,T]; \mathbb{R})$ be fixed and $u_{1}$ and $u_{2}$ be two solution of \eqref{eq:skeleton}. Let us consider the following convex approximation of the absolute value function. Let $\zeta : \mathbb{R} \rightarrow \mathbb{R}$ be a $C^{\infty}$ function satisfying 

\[ \zeta (0) = 0, \quad \zeta (-r) = \zeta (r), \quad \zeta^{'}(-r) = - \zeta^{'}(r), \quad \zeta^{''} \geq 0, \]
and
\begin{center}
    $ \zeta^{'}(r) = \left\{\begin{matrix}
  -1 & \text{when} \ r \leq 0, \\ 
  \in [-1,1] & \text{when} \ |r| < 1, \\
  1 & \text{when} \ r \geq 1.
\end{matrix}\right.$
\end{center}
 
 For any $\vartheta > 0$, define $\zeta_{\vartheta} : \mathbb{R} \rightarrow \mathbb{R}$ by $ \displaystyle \zeta_{\vartheta} = \vartheta \zeta (\frac{r}{\vartheta})$. Then, one can check that  $\zeta_\vartheta \in C^\infty(\R)$ and satisfies the following properties: 
 \begin{align*}
  |r| - K_1 \vartheta \leq \zeta_{\vartheta} \leq |r|,  \quad |\zeta_{\vartheta}^{''}(r)| \leq \frac{K_2}{\vartheta} \textbf{1}_{|r| \leq \vartheta},
  \end{align*}
 where $K_1 = \underset{|r|\leq 1}\sup \left | |r| - \zeta(r) \right |$ and $K_2 = \underset{|r|\leq 1}\sup |\zeta^{''}(r)|$.
 \vspace{0.5cm}

Using chain-rule and integration by parts formula, we have
 \begin{align}
&\int_{D} \zeta_{\vartheta}({\tt B}(u_1(t)) -{\tt B}( u_2(t))) dx \notag \\
& =  - \int_{0}^{t}\int_{D} \left( {\tt A}(\nabla u_1) - {\tt A}(\nabla u_2 )\right) \cdot \left( \nabla {\tt B}(u_1(s)) -\nabla {\tt B} (u_2(s))\right)
 \, \zeta_{\vartheta}^{''}({\tt B}(u_1(s)) -{\tt B}( u_2(s))) \, dx \, ds  \notag \\
  & \qquad  + \int_{0}^{t}\int_{D} \zeta_{\vartheta}^{'}({\tt B}(u_1(t)) -{\tt B}( u_2(t))) \left(\sigma(u_1) - \sigma(u_2)\right) \, h(s)  \, dx \, ds\,. \notag
 \end{align}
 Thanks to \eqref{A:monotonocity}, the assumption \ref{A3} and the fact that $\zeta_{\vartheta}^{\prime \prime}\ge 0$, one can easily observe that
\begin{align*}
 & -  \int_{D}\big(  {\tt A}(\grad u_1) - {\tt A}(\grad u_2)\big)
 \cdot  \big(\grad {\tt B}(u_1)- \grad {\tt B}(u_2)\big)\zeta_{\vartheta}^{\prime\prime} \big({\tt B}(u_1)-{\tt B}(u_2)\big)\,{\rm d}x \\
 &= -  \int_D {\tt b}^\prime(u_1)\big(  {\tt A}(\grad u_1) - {\tt A}(\grad u_2)\big)
 \cdot  \big(\grad u_1- \grad u_2\big)\zeta_{\vartheta}^{\prime\prime} \big({\tt B}(u_1)-{\tt B}(u_2)\big)\,{\rm d}x \\
 & \qquad  -  \int_D \big(  {\tt A}(\grad u_1) - {\tt A}(\grad u_2)\big)
 \cdot   \grad u_2 \big( {\tt b}^\prime(u_1)-{\tt b}^\prime(u_2)\big)\zeta_{\vartheta}^{\prime\prime} \big({\tt B}(u_1)-{\tt B}(u_2)\big)\,{\rm d}x  \\
 & \le  -  \int_D \big(  {\tt A}(\grad u_1) - {\tt A}(\grad u_2)\big)
 \cdot   \grad u_2 \big( {\tt b}^\prime(u_1)-{\tt b}^\prime(u_2)\big)\zeta_{\vartheta}^{\prime\prime} \big({\tt B}(u_1)-{\tt B}(u_2)\big)\,{\rm d}x \,.
\end{align*}
Thus, we have 
\begin{align}
 &  \int_{D}\zeta_\vartheta \big( {\tt B}(u_1(t))-{\tt B}(u_2(t))\big)\,{\rm d}x  \notag \\
 & \le  - \int_{0}^t \int_D \big(  {\tt A}(\grad u_1(s)) - {\tt A}(\grad u_2(s))\big)
 \cdot   \grad u_2(s) \big( {\tt b}^\prime(u_1(s))-{\tt b}^\prime(u_2(s))\big) \notag \\
 & \hspace{3cm} \times \zeta_{\vartheta}^{\prime\prime} \big({\tt B}(u_1(s))-{\tt B}(u_2(s))\big)\,{\rm d}x\,{\rm d}s
  \notag \\
 & \qquad + \int_{0}^{t}\int_{D} \left |\sigma(u_1) - \sigma(u_2)\right | \, h(s)  \, dx \, ds 
  \equiv \mathcal{A}_6 + \mathcal{A}_7 \,. \label{inq:sum-a-b-pathwise-uniqueness}
\end{align}
We estimate each of the above terms separately. Consider the term $\mathcal{A}_6$. 
Since $\zeta_{\vartheta}^{\prime \prime}(r)\le \frac{M_2}{\vartheta} \textbf{1}_{\{|r| \le \vartheta\}}$ and ${\tt b}^\prime$ is Lipschitz continuous 
function, by using \eqref{B:reverse-poinwise}, we have 
\begin{align*}
& \big({\tt b}^\prime(u_1)-{\tt b}^\prime(u_2)\big) \nabla u_2\cdot \big( {\tt A}(\nabla u_1)-{\tt A}(\nabla u_2)\big)\zeta_{\vartheta}^{\prime\prime}\big( {\tt B}(u_1)-{\tt B}(u_2)\big) \\
& \le C_{{\tt b}^\prime} |u_1 -u_2|\,|\grad u_2| \big| {\tt A}(\nabla u_1)-{\tt A}(\nabla u_2)\big| \frac{M_2}{\vartheta} \textbf{1}_{\{|{\tt B}(u_1)-{\tt B}(u_2)| \le \vartheta\}}  \\
&  \le  \frac{C_{{\tt b}^\prime}}{C_3} |{\tt B}(u_1) -{\tt B}(u_2)|\,|\grad u_2| \big|{\tt A}(\nabla u_1)-{\tt A}(\nabla u_2)\big| \frac{M_2}{\vartheta} \textbf{1}_{\{|{\tt B}(u_1)-{\tt B}(u_2)| \le \vartheta\}} \goto 0 \quad (\vartheta \goto 0)
\end{align*}
for almost every $(t,x)\in D_T$. Moreover
\begin{align*}
 & \big| {\tt b}^\prime(u_1)-{\tt b}^\prime(u_2)\big| |\nabla u_2| \big| {\tt A}(\nabla u_1)-{\tt A}(\nabla u_2)\big|\beta_{\vartheta}^{\prime\prime}\big( {\tt B}(u_1)-{\tt B}(u_2)\big)  \notag \\
 & \le  \frac{C_{{\tt b}^\prime}}{C_3} M_2 |\nabla u_2| \big| {\tt A}(\nabla u_1)-{\tt A}(\nabla u_2)\big| \in L^1( D_T)\,.
\end{align*}
 Hence, the dominated convergence theorem yields that $\mathcal{A}_6\goto 0$ as $\vartheta \goto 0$.
\vspace{.1cm}

  We use \eqref{B:reverse-poinwise} and  the assumption \ref{A4}  to estimate $\mathcal{A}_7$ as
 \begin{align*}
    |\mathcal{A}_7| \, & \leq C \int_{0}^{t}\int_{D} |u_1 - u_2| \, |h(s)| \, dx \, ds \leq C \int_{0}^{t} \Big( \int_{D} |u_1 - u_2|\, dx \Big) |h(s)|  \, ds \notag \\
    & \le C  \int_{0}^{t} \Big( \int_{D} \big| {\tt B}(u_1(s)) -{\tt B}( u_2(s))\big|\,dx \Big)\,ds\,.
    \end{align*}
\vspace{.1cm}

Combining things together and passing to the limit in \eqref{inq:sum-a-b-pathwise-uniqueness}, we have for any $t\in [0,T]$,
\begin{align}
 \int_{D}\big| {\tt B}(u_1(t,x))-{\tt B}(u_2(t,x))\big|\,{\rm d}x \le C  \int_{0}^{t} \Big( \int_{D} \big| {\tt B}(u_1(s)) -{\tt B}( u_2(s))\big|\,dx \Big)\,ds\,. \notag
\end{align}
Thus, using Gronwall's lemma, we have  ${\tt B}(u_1(t,x))={\tt B}(u_2(t,x))$ for a.e. $(t,x)\in D_T$. Since ${\tt b}: \R \goto \R$ is bijective, in view of the definition of ${\tt B}$, we may conclude that  $u_1(t,x)=u_2(t,x)$ for a.e. $(t,x)\in D_T$.  This complete the uniqueness proof.

\section{Proof of Large deviation principle~(Theorem \ref{thm:main-ldp})} \label{sec:LDP}
In this section, we give the proof of Theorem \ref{thm:main-ldp}. In view of Theorem \ref{thm:ldp-general-sufficient}, it suffices to show that  the conditions \ref{C1} and \ref{C2} is fulfilled by the solution $u^\eps$
of the equation \eqref{eq:ldp}. 

\subsection{Proof of condition \ref{C2}}\label{subsec:proof-cond-c2}
To prove condition \ref{C2}, since $S_M$ is weakly compact, it is enough to prove that if $h_{n} \rightarrow h$ weakly in $L^2([0,T],\mathbb{R})$ then $u_n \rightarrow u_h$ strongly in $C([0,T];L^{2})$, where $u_n$ and $u_h$ are the solution of \eqref{eq:skeleton} corresponding to $h_n$ and $h$ respectively. We prove this in several steps. 
\vspace{0.1cm}

\noindent {\bf   Step-I:} \underline{A-priori estimate for $ {\tt B}(u_n)$}:   In this step, we derive necessary a-priori a-priori estimates for $ {\tt B}(u_n)$.  There exists a constant $C$, independent of $n$ such that 
\begin{equation}
 \label{eq:ub}
    \begin{aligned}
      \sup_{n}\left[ \sup_{s \in [0,T]} \left \|{\tt B}(u_n(s)) \right \|_{L^2(D)}^{2} + \int_{0}^{T} \left \|u_n(s) \right \|_{W_{0}^{1,p}(D)}^{p} ds\right]    \leq C\,, \\
      \sup_{n}\left[ \int_{D_T} \left( |{\tt A}(\nabla u_n)|^{p^\prime} + |\nabla u_n|^p \right)\,dx\,dt\right] \le C\,.
    \end{aligned}
\end{equation}
Indeed,  an application of chain-rule, the integration by parts formula, the assumption \ref{A4} and Cauchy-Schwartz inequality together with \eqref{B:reverse-L2} gives
\begin{align}
    \frac{d}{dt} \left \| {\tt B}(u_n(t)) \right \|_{L^2}^{2} & = -2\left< {\tt A}(\nabla u_n), 
\nabla {\tt B}(u_n) \right> + 2 \left< \sigma(u_n)h_n(t), {\tt B}(u_n(t))\right> \notag \\
    & \leq -2\left< {\tt A}(\nabla u_n), 
\nabla {\tt B}(u_n) \right> 
    + C \left \| u_n(t) \right \|_{L^2}^{2} + \left | h_n(t) \right |^2 \left \| {\tt B}(u_n(t)) \right \|_{L^2}^{2} \notag \\
    & \leq -2\left< {\tt A}(\nabla u_n), 
\nabla {\tt B}(u_n) \right> 
    + C(1+ \left | h_n(t) \right |^2)\left \| {\tt B}(u_n(t)) \right \|_{L^2}^{2} \,. \notag
\end{align}
Using the estimate \eqref{esti:2}, integrating over time from $0$ to $t$, and then applying Gronwall's lemma together with the boundedness property of $h_n$ i.e., 
\begin{align}
 \displaystyle \int_0^T h_n^2(t)\,dt \le M\,, \label{esti:boundeness-hn}
 \end{align}
  we have
$$ \sup_{n} \sup_{0\le t\le T}  \left \| {\tt B}(u_n(t)) \right \|_{L^2}^{2} \le C(T,M).$$
Moreover, thanks to Poincare inequality and above uniform bound, one has 
\begin{align}
 \sup_{n} \bigg[ \sup_{s \in [0,T]} \left \| {\tt B}(u_n(s)) \right \|_{L^2}^{2} + \int_{0}^{T} \left \|u_n(s) \right \|_{W_{0}^{1,p}}^{p} ds \bigg]   \leq C_{M}. \label{esti:apriori-1-B-un}
 \end{align}
Furthermore, one may use \eqref{A:boundedness} together with \eqref{esti:apriori-1-B-un} to arrive at the second assertion of \eqref{eq:ub}.
\vspace{0.2cm}

Following the proof of Lemma \ref{lem:cpt0} along with \eqref{esti:boundeness-hn}, we get the following estimates.
\begin{align}\label{esti:frac-Sov-sequence}
\begin{cases}
 \displaystyle \sup_{n} \left\{ \left\| {\tt B}(u_n)\right\|_{W^{\alpha,p}([0,T];{W^{-1,p'}})}  \right\}  < \infty \quad \text{for}~ \alpha \in (0,\frac{1}{p})\,, \\
\displaystyle \|{\tt B}(u_n(t))-{\tt B}(u_n(s))\|_{W^{-1,p'}} \le C |t-s|^\beta \,.
\end{cases}
\end{align}
for some $\beta>0$. 
\vspace{0.2cm}

\noindent {\bf Step-II:} \underline{ Convergence analysis of $ {\tt B}(u_n)$:  } In view of a-priori estimates \eqref{eq:ub} and \eqref{esti:frac-Sov-sequence} and  Lemma \ref{lem:cpt} along with  Arzel$\acute{a}$–Ascoli theorem, Poincare inequality and \eqref{esti:operator-B-grad}, there exist a subsequence  of $\{ {\tt B}(u_n)\}$, still denoted by $\left \{{\tt B}(u_n) \right \}$, and $\bar{\tt B}_* \in L^{2}(D_T) \, \cap \, L^{p}([0,T];W_{0}^{1,p}) \, \cap \, L^{\infty}([0,T];L^{2})$ such that
\begin{align}\label{conv:c2-1}
\begin{cases}
{\tt B}(u_n) \goto \bar{\tt B}_* \quad \text{in}~~\mathcal{O}, \\
\nabla {\tt B}(u_n) \rightharpoonup \nabla \bar{\tt B}_*\quad \text{in}~~L^p(D_T)^d\,.
\end{cases}
\end{align}
Define a function $\bar{u}_*$ via
$$ \bar{u}_*(t,x):= {\tt B}^{-1} \bar{\tt B}_*(t,x).$$
Then $ \bar{u}_*$ is well-defined and since $ \bar{\tt B}_*\in  L^{\infty}([0,T];L^{2})$, by Lipschitz continuity property of $ {\tt B}^{-1}$, we have
\begin{align}
\sup_{0\le t\le T} \|\bar{u}_*(t)\|_{L^2}^2\le C\,. \label{esti:sup-l2-limit-fun-c2-cond}
\end{align}
Moreover, Lipschitz continuity property of $ {\tt B}^{-1}$ and convergence result in \eqref{conv:c2-1} implies that 
\begin{align}
u_n\goto \bar{u}_* \quad \text{in}~~L^2(D_T)\,. \label{conv:c2-2}
\end{align}
With \eqref{conv:c2-1} and \eqref{conv:c2-2} at hand, we focus on the convergence of the controlled drift term. 
 Observe that, for any $\phi\in W_0^{1,p}$
\begin{align}
     &\left |\int_{0}^{T} \left<\sigma(u_n(t)){h}_n(t) - \sigma(\bar{u}_*(t))h(t) , \phi \right> \, dt  \right | \notag \\
     &\leq \left | \int_{0}^{T} \left< \big(\sigma(u_n(t)) - \sigma(\bar{u}_*(t)) \big) {h}_n(t) , \phi \right> \, dt + \int_{0}^{T} \left< \big(h_{n}(t) - h(t) \big)\sigma(\bar{u}_*(t)) , \phi \right> \, dt  \right | \notag \\
     &:= \mathcal{I}_1(T) + \mathcal{I}_2(T). \notag
\end{align}
An application of Holder's inequality, the assumption \ref{A4},  \eqref{esti:boundeness-hn} and \eqref{conv:c2-2} reveals that
\begin{align*}
    \mathcal{I}_1(T) & \leq \int_{0}^{T} \left \| \big(\sigma(u_n(t)) - \sigma(\bar{u}_*(t) \big) {h}_n(t) \right \|_{L^2} \left \| \phi \right \|_{L^2} dt \\
    & \leq \int_{0}^{T} \left \| \sigma(u_n(t)) - \sigma(\bar{u}_*(t)   \right \|_{L^2} \left |{h}_n(t)  \right | \left \| \phi \right \|_{L^2} dt \\
    & \leq C \int_{0}^{T} \left \| u_n - \bar{u}_* \right \|_{L^2} \left |{h}_n(t)  \right | dt 
      \leq C  \bigg(\int_{0}^{T} \left \| u_n - \bar{u}_* \right \|_{L^2}^{2} \, dt\bigg)^{\frac{1}{2}} \bigg(\int_{0}^{T}  \left |{h}_n(t)  \right |^2  \, dt \bigg)^{\frac{1}{2}} \rightarrow 0.
    \end{align*}
    Notice that the function $\displaystyle \psi(t): = \int_{D} \sigma(\bar{u}_*(t,x) \phi(x) \, dx $  lies in $L^2([0,T]; \mathbb{R})$. Indeed,  thanks to the assumption \ref{A4} and  \eqref{esti:sup-l2-limit-fun-c2-cond}
    \begin{align*}
   \|\psi\|_{L^2[0,T]}^2\leq \int_{0}^{T} \bigg(\int_{D} |\sigma(\bar{u}_*(t,x)|^2 \, dx \bigg)  \bigg(\int_{D} |\phi(x)|^2 \, dx \bigg) \, dt 
     \leq C \left \| \phi \right \|_{L^2}^2  \int_{0}^{T} \left \|\bar{u}_*(t) \right \|_{L^2}^2 dt < \infty.
\end{align*}
Since $h_{n}  \rightharpoonup h$ in $L^2([0,T],\mathbb{R})$, one then easily get
 $\displaystyle    \mathcal{I}_2(T)=\int_0^T (h_n(t)-h(t))\psi(t)\,dt  \rightarrow 0$.  Therefore, for any $\phi \in W_0^{1,p}$, we have the following convergence result.
 \begin{align}
 \lim_{n\goto \infty} \int_{0}^{T} \left<\sigma(u_n(t)){h}_n(t) - \sigma(\bar{u}_*(t))h(t) , \phi \right> \, dt =0\,. \label{con:sequence-3}
 \end{align}
 Notice that \eqref{con:sequence-3} still holds if one replaces $\phi(x)$  by $\phi(x)\xi(t)$ for any $\xi \in \mathcal{D}(0,T)$.  Following the proof of Lemma \ref{eq:cgseq1}, we arrive at the following conclusion:
 \begin{align}\label{eq:weak-form-limit-function-c2}
 \begin{cases}
 \text{For all $t\in [0,T]$, $u_n(t) \rightharpoonup  \bar{u}_*(t)$ in $L^2$}, \\
 \text{ For all $t\in [0,T]$, $\bar{u}_*$ solves the equation}\\
 $$
 \displaystyle  {\tt B}(\bar{u}_*(t))= {\tt B}(u_0) +  \int_{0}^{t}  {\rm div}_x \vec{\bar{F}}_*(s) ds + \int_{0}^{t} \sigma(\bar{u}_*(s))h(s) ds \quad \text{in $L^2$ }\,, $$
 \end{cases}
 \end{align}
 where $\vec{\bar{F}}_*\in L^{p^\prime}(D_T)^d$ such that ${\tt A}(\nabla u_n) \rightharpoonup \vec{\bar{F}}_*$ in $ L^{p^\prime}(D_T)^d$.
 \vspace{0.15cm}
 
 \noindent {\bf Step-III:} \underline{Identification of $\vec{\bar{F}}_*$: } We claim that
 \begin{align*}
 \vec{\bar{F}}_*= {\tt A}(\nabla \bar{u}_*) \quad \text{in}~~~ L^{p^\prime}(D_T)^d\,.
 \end{align*}
To prove it, we proceed as follows.  An application of chain-rule on ${\tt B}(u_n)$ resp. ${\tt B}(\bar{u}_*)$ in \eqref{eq:skeleton} resp. in \eqref{eq:weak-form-limit-function-c2} and integration by parts formula yields
 \begin{align}
   &  \frac{1}{2}  \left\{||{\tt B}(u_n(T))||_{L^2}^{2}  -   \left \|{\tt B}(\bar{u}_*(T)) \right \|_{L^2}^{2}\right\}+  \int_{D_T} \Big[ {\tt b}^\prime(u_n(t)){\tt A}(\nabla u_n(t))\cdot \nabla u_n(t)-  \vec{\bar{F}}_* \cdot \nabla {\tt B}(\bar{u}_*(t))\Big] \, dx \, dt  \notag \\
     & =  \int_{D_T} \Big[ \sigma({u}_n(t))h_{\tau}(t) {\tt B} (u_n(t)) - \sigma(\bar{u}_*(t))h(t) {\tt B}(\bar{u}_*(t))\Big]\,dx\, dt \equiv \mathcal{I}_3\,. \notag
     \end{align}
 We now show that $\mathcal{I}_3\goto 0$ as $n\goto \infty$.  For $ \displaystyle \bar{\psi}(t):= \int_{D} \sigma(\bar{u}_*(t,x)) \, \bar{u}_*(t,x) \,dx $, we see that
\begin{align*}
      \int_0^T |\bar{\psi}(t)|^2\,dt  \leq \int_{0}^{T} \bigg(\int_{D} |\sigma(\bar{u}_*|^2 \, dx  \bigg)  \bigg(\int_{D} |\bar{u}_*|^2 \, dx \bigg) \, dt \leq C \int_{0}^{T} \left \| \bar{u}_*(t) \right \|_{L^2}^{4} dt < \infty\,.
\end{align*}
 Since  $h_{n} \rightharpoonup h$  in $L^2([0,T],\mathbb{R})$, we have $\displaystyle \int_0^T h_n(t) \bar{\psi}(t)\,dt  \goto \int_0^T h(t)  \bar{\psi}(t) \,dt$. In other words,
\begin{align} \label{eq:lem4.4.1}
    \lim_{n\rightarrow \infty} \left | \int_{0}^{T} \left< \sigma(\bar{u}_*(t)) \big(h_{n}(t) - h(t) \big) , {\tt B}(\bar{u}_*(t)) \right> \, dt  \right | = 0.
\end{align}
 In view of the assumption \ref{A4}, \eqref{conv:c2-2}, \eqref{eq:ub}, \eqref{esti:sup-l2-limit-fun-c2-cond} and  \eqref{esti:boundeness-hn}, we get
\begin{align}  \label{eq:lem4.4.2}
 \left |\int_{0}^{T} \left< \sigma(u_n)h_n(t) , {\tt B}(u_n)-{\tt B}(\bar{u}_*)\right> \, dt \right | & \leq C \int_{0}^{T} \left \| \sigma(u_n)  \right \|_{L^2} \left | h_n \right |  \left \|u_n-\bar{u}_*  \right \|_{L^2} \, dt \notag \\
 & \leq \bigg( \int_{0}^{T} C \left \| u_n  \right \|_{L^2}^{2} \left \|u_n-\bar{u}_*  \right \|_{L^2}^{2} \, dt \bigg)^{\frac{1}{2}} \bigg( \int_{0}^{T} \left | h_n \right |^{2} \, dt  \bigg)^{\frac{1}{2}} \notag \\
 & \leq C_M \bigg[ \sup_{s \in [0,T]} \left \| {\tt B}(u_n(s))  \right \|_{L^2}^{2} \int_{0}^{T}  \left \|u_n-\bar{u}_*  \right \|_{L^2}^{2} \, dt \bigg]^{\frac{1}{2}} \rightarrow 0.
  \end{align}
 \begin{align}  \label{eq:lem4.4.3}
 \left |\int_{0}^{T} \left< \big(\sigma(u_n)-\sigma(\bar{u}_*)\big)h_n(t) , {\tt B}(\bar{u}_*)\right> \, dt \right | & \leq  C \int_{0}^{T} \left \| \sigma(u_n)-\sigma(\bar{u}_*)  \right \|_{L^2} \left | h_n \right |  \left \|\bar{u}_*  \right \|_{L^2} \, dt \notag \\
 & \leq \bigg( \int_{0}^{T} C \left \| u_n-\bar{u}_*  \right \|_{L^2}^{2} \left \|\bar{u}_*  \right \|_{L^2}^{2} \, dt \bigg)^{\frac{1}{2}} \bigg( \int_{0}^{T} \left | h_n \right |^{2} \, dt  \bigg)^{\frac{1}{2}} \notag \\
 & \leq C_M \bigg[ \sup_{s \in [0,T]} \left \| \bar{u}_*(s)  \right \|_{L^2}^{2} \int_{0}^{T}  \left \|u_n-\bar{u}_*  \right \|_{L^2}^{2} \, dt \bigg]^{\frac{1}{2}} \rightarrow 0.
\end{align}
Combining \eqref{eq:lem4.4.1}, \eqref{eq:lem4.4.2} and \eqref{eq:lem4.4.3}, we see that $\mathcal{I}_3\goto 0$ as $n\goto \infty$. Since $u_n(T) \rightharpoonup
\bar{u}_*(T)$ in $L^2$, we have
\begin{align}
    \underset{\tau}\limsup  \int_{D_T} {\tt b}^\prime(u_n(t)){\tt A}(\nabla u_n(t))\cdot \nabla u_n(t)\,dx\,dt  \leq  \int_{D_T}  \vec{\bar{F}}_* \cdot \nabla {\tt B}(\bar{u}_*(t))\Big] \, dx \, dt \,. \label{esti:vecF-identify-c2-2}
\end{align}
Rest of the proof follows from Subsection \ref{subsec:existence-skeleton} i.e., from \eqref{esti:5-new}, \eqref{esti:6-new} and ${\rm (M)}$-property of the operator ${\tt A}$.
\vspace{0.15cm}

\noindent{\bf Step-IV:} \underline{ Convergence of $u_n$ to $\bar{u}_*=u_h$ in $\mathcal{Z}$:} In view of \eqref{eq:weak-form-limit-function-c2} and the fact that
 $\vec{\bar{F}}_*= {\tt A}(\nabla \bar{u}_*)$ in $L^{p^\prime}(D_T)^d$~(cf.~ {\bf Step-III}), we see that $\bar{u}_*$ is a solution of \eqref{eq:skeleton}.
 Moreover, thanks to uniqueness of solution of skeleton equation ~(cf.~Section \ref{sec:uniqueness}), $\bar{u}_*=u_h$ is the unique  solution of \eqref{eq:skeleton}.
We now show that $u_n \goto u_h$ in $\mathcal{Z}$. To do so, we  apply chain-rule for the equation satisfied by $u_n-u_h$ and use the integration by parts formula, the assumption \ref{A3} and \eqref{A:monotonocity} to have
\begin{align}
  &  \frac{d}{dt} \left \| {\tt B}(u_n(t)) -{\tt B}(u_h(t)) \right \|_{L^2}^{2} \notag \\
  & = -2\left<{\tt A}(\nabla u_n) -{\tt A}(\nabla u_h), \nabla {\tt B}(u_n)- \nabla {\tt B}(u_h) \right>  
     + 2 \left< \sigma(u_n)h_n(t)-\sigma(u_h)h(t) , {\tt B}(u_n)-{\tt B}(u_h)\right> \notag  \\
    & \leq - 2 \int_D \big(  {\tt A}(\grad u_n) - {\tt A}(\grad u_h)\big)
 \cdot   \grad u_h \big( {\tt b}^\prime(u_n)-{\tt b}^\prime(u_h)\big)+ 2 \left< \sigma(u_n)h_n(t)-\sigma(u_h)h(t) ,{\tt B}(u_n)-{\tt B}(u_h)\right>. \notag 
\end{align}
 Integrating over time, we get
\begin{align}
   & \sup_{s \in [0,T]} \left \| {\tt B}(u_n(s))-{\tt B}(u_h(s)) \right \|_{L^2}^{2}  \notag \\  
   & \leq - 2 \int_{D_T} \big(  {\tt A}(\grad u_n) - {\tt A}(\grad u_h)\big)
 \cdot   \grad u_h {\tt b}^\prime(u_n) \,dx\,dt 
 + 2  \int_{D_T} \big(  {\tt A}(\grad u_n) - {\tt A}(\grad u_h)\big)\cdot \nabla u_h{\tt b}^\prime(u_h)\,dx\,dt \notag \\
 & \quad + 2 \int_{0}^{T} \left< \sigma(u_n)h_n(t) , {\tt B}(u_n)-{\tt B}(u_h)\right>\,dt
     - 2 \int_{0}^{T} \left< \sigma(u_h)h(t) ,{\tt B}(u_n)-{\tt B}(u_h)\right>\,dt\equiv \sum_{i=4}^7 \mathcal{I}_i\,.
     \label{inq:con-det-}
\end{align}
Like  $\mathcal{A}_3$, we easily see that $\mathcal{I}_4\goto 0$ as $n\goto \infty$. Moreover, since ${\tt A}(\nabla u_n)  \rightharpoonup {\tt A}(\nabla u_h) $ in 
$L^{p^\prime}(D_T)^d$ and $\nabla u_h{\tt b}^\prime(u_h) \in L^p(D_T)^d$, one has $\mathcal{I}_5\goto 0$ as $n\goto \infty$. Furthemore,  thanks to Cauchy-Schwartz inequality, \eqref{B:reverse-L2}, \eqref{esti:B-Lipschitz},  \eqref{esti:boundeness-hn}  and \eqref{conv:c2-2}, one has
\begin{align*}
\mathcal{I}_6 & \leq \int_{0}^{T} \left \| \sigma(u_n)  \right \|_{L^2} \left | h_n \right |  \left \| {\tt B}(u_n)-{\tt B}(u_h)  \right \|_{L^2} \, dt \\
 & \leq C \bigg( \int_{0}^{T}  \left \| u_n  \right \|_{L^2}^{2} \left \| {\tt B}(u_n)-{\tt B}(u_h)  \right \|_{L^2}^{2} \, dt \bigg)^{\frac{1}{2}} \bigg( \int_{0}^{T} \left | h_n \right |^{2} \, dt  \bigg)^{\frac{1}{2}}  \\
 & \leq C_M \bigg\{ \sup_{s \in [0,T]} \left \| {\tt B}(u_n(s))  \right \|_{L^2}^{2} \int_{0}^{T}  \left \|u_n-u_h  \right \|_{L^2}^{2} \, dt \bigg\}^{\frac{1}{2}} \rightarrow 0\,. \notag 
 \end{align*}
 Similarly, we have
 \begin{align*}
   \mathcal{I}_7 & \leq C_M \bigg( \int_{0}^{T}  \left \|u_n-u_h  \right \|_{L^2}^{2} \, dt \bigg)^{\frac{1}{2}} \rightarrow 0\,.
\end{align*}
 Combining these results in \eqref{inq:con-det-}, we have
 \begin{align}
 \sup_{s \in [0,T]} \left \| {\tt B}(u_n(s))-{\tt B}(u_h(s)) \right \|_{L^2}^{2} \goto 0  \quad \text{as $n\goto \infty$}. \label{conv:c2-final-1}
 \end{align}
 Hence the assertion follows from \eqref{conv:c2-final-1} once we use \eqref{B:reverse-poinwise}. This completes the proof of condition \ref{C2}.
%%%%%%%%%%%%%%%%%%%%%%%%%%%%%%%%%%%%%%%%%%%%%%%%%%%%%%%%%%%%%%%%%%

\subsection{Proof of condition \ref{C1}}\label{subsec:cond-c1}
In this subsection, we prove condition \ref{C1}. To proceed further, we consider the following equation: for $\eps>0$
\begin{align}
    d {\tt B}(v^{\eps})  - {\rm div}_x {\tt A}( \nabla v^{\eps}) \, dt &= \sigma(v^{\eps})h(t)\,dt  + \sqrt{\eps}\, \sigma(v^{\eps})\, dW(t)\,, \label{eq:epsilon-main}
\end{align}
where $ \{h^\eps\} \subset \mathcal{A}_M$ converges to $h$ in distribution as $S_{M}$-valued random variables.  An application of Girsanov theorem and  the uniqueness of equation \eqref{eq:ldp}, we conclude that the equation \eqref{eq:epsilon-main} has unique solution $ \displaystyle v^{\eps} = \mathcal{G}^{\eps}\left( W(\cdot) + \frac{1}{{\sqrt{\eps}}} \int_0^{\cdot} h^{\eps}(s) \, ds \right) $. Therefore,  to  prove condition \ref{C1}, it is sufficient to show that $v^{\eps} \rightarrow u_h $ in distribution  on $\mathcal{Z}$ as $\eps \rightarrow 0$, where $u_h$ is the unique solution of the Skeleton equation \eqref{eq:skeleton}. 
 \vspace{0.2cm}
 
 \noindent\underline{\bf Step A: a-priori estimates.}  We assume that $ 0<\eps \leq 1$. We first  derive necessary uniform estimates of $v^{\eps}$ in the upcoming Lemmas \ref{lem:est2.1} and \ref{lem:est2.2}.
\begin{lem} There exists a constant $C>0$ such that 
\label{lem:est2.1}
 $$ \sup_{\eps \in (0,1]} \mathbb{E} \bigg[ \sup_{s \in [0,T]} \left \| {\tt B}(v^{\eps}(s)) \right \|_{L^2}^{2} + \int_{0}^{T}  \left \| {\tt B}( v^{\eps}(s)) \right \|_{W_{0}^{1,p}}^{p} ds \bigg] \leq C. $$   
\end{lem}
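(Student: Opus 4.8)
The plan is to apply the It\^o formula to $t\mapsto\|{\tt B}(v^{\eps}(t))\|_{L^2}^{2}$, exactly as in the chain-rule computation carried out in the proof of condition \ref{C2}, but now keeping the It\^o correction coming from the stochastic term. Writing $v^{\eps}$ for the solution of \eqref{eq:epsilon-main} driven by $h^{\eps}\in\mathcal{A}_M$, so that $\int_0^T|h^{\eps}(s)|^2\,ds\le M$ $\mathbb{P}$-a.s.\ (cf.\ \eqref{defi:sets-for-ldp}), and using the integration-by-parts identity $\langle{\rm div}_x{\tt A}(\nabla v^{\eps}),{\tt B}(v^{\eps})\rangle=-\int_D{\tt A}(\nabla v^{\eps})\cdot\nabla{\tt B}(v^{\eps})\,dx$, one obtains
\begin{align*}
\|{\tt B}(v^{\eps}(t))\|_{L^2}^{2}
&=\|{\tt B}(u_0)\|_{L^2}^{2}-2\int_0^t\!\!\int_D{\tt A}(\nabla v^{\eps})\cdot\nabla{\tt B}(v^{\eps})\,dx\,ds+2\int_0^t\langle\sigma(v^{\eps})h^{\eps}(s),{\tt B}(v^{\eps}(s))\rangle\,ds\\
&\quad+2\sqrt{\eps}\int_0^t\langle\sigma(v^{\eps}(s)),{\tt B}(v^{\eps}(s))\rangle\,dW(s)+\eps\int_0^t\|\sigma(v^{\eps}(s))\|_{L^2}^{2}\,ds .
\end{align*}
For the dissipative term I would reuse the coercivity computation behind \eqref{esti:2} (using \eqref{A:coercivity} and $C_3\le{\tt b}^{\prime}\le C_4$ from \ref{A3}), which gives $\int_D{\tt A}(\nabla v^{\eps})\cdot\nabla{\tt B}(v^{\eps})\,dx\ge C_1C_3\|\nabla v^{\eps}\|_{L^p}^{p}-C_4\|K_1\|_{L^1}$; for the controlled drift, assumption \ref{A4} and \eqref{B:reverse-L2} give $|\langle\sigma(v^{\eps})h^{\eps},{\tt B}(v^{\eps})\rangle|\le\frac{c_\sigma}{C_3}|h^{\eps}(s)|\,\|{\tt B}(v^{\eps}(s))\|_{L^2}^{2}$; and $\eps\|\sigma(v^{\eps})\|_{L^2}^{2}\le\frac{c_\sigma^{2}}{C_3^{2}}\|{\tt B}(v^{\eps}(s))\|_{L^2}^{2}$ since $\eps\le1$.

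Collecting these bounds and setting $\psi^{\eps}(s):=\frac{2c_\sigma}{C_3}|h^{\eps}(s)|+\frac{c_\sigma^{2}}{C_3^{2}}$, which by Cauchy--Schwarz obeys $\int_0^T\psi^{\eps}(s)\,ds\le\frac{2c_\sigma}{C_3}\sqrt{TM}+\frac{c_\sigma^{2}T}{C_3^{2}}=:L_M$ $\mathbb{P}$-a.s.\ (a bound independent of $\eps$), one reaches the pathwise inequality
\begin{align*}
\|{\tt B}(v^{\eps}(t))\|_{L^2}^{2}+2C_1C_3\int_0^t\|\nabla v^{\eps}(s)\|_{L^p}^{p}\,ds\le C_0+\int_0^t\psi^{\eps}(s)\,\|{\tt B}(v^{\eps}(s))\|_{L^2}^{2}\,ds+2\sqrt{\eps}\,N^{\eps}(t),
\end{align*}
with $C_0:=\|{\tt B}(u_0)\|_{L^2}^{2}+2C_4\|K_1\|_{L^1}T$ and $N^{\eps}(t):=\int_0^t\langle\sigma(v^{\eps}),{\tt B}(v^{\eps})\rangle\,dW(s)$. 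I would then localise with $\tau_R:=\inf\{t:\|{\tt B}(v^{\eps}(t))\|_{L^2}^{2}\ge R\}\wedge T$ (so $\tau_R\uparrow T$ a.s.\ by continuity of $t\mapsto\|{\tt B}(v^{\eps}(t))\|_{L^2}$, the stochastic integral stopped at $\tau_R$ is a genuine square-integrable martingale, and $g_R(t):=\mathbb{E}[\sup_{s\le t\wedge\tau_R}\|{\tt B}(v^{\eps}(s))\|_{L^2}^{2}]\le R<\infty$), run a \emph{pathwise} Gronwall argument on the linear integral inequality above, using only $\int_0^T\psi^{\eps}\le L_M$, to get $\sup_{s\le t\wedge\tau_R}\|{\tt B}(v^{\eps}(s))\|_{L^2}^{2}\le\Lambda_M\big(C_0+2\sqrt{\eps}\sup_{s\le t\wedge\tau_R}|N^{\eps}(s)|\big)$ with $\Lambda_M:=1+L_Me^{L_M}$, take expectations, and use the Burkholder--Davis--Gundy inequality together with $\|\sigma(v^{\eps})\|_{L^2}^{2}\le\frac{c_\sigma^{2}}{C_3^{2}}\|{\tt B}(v^{\eps})\|_{L^2}^{2}$ and Young's inequality to bound
\begin{align*}
2\sqrt{\eps}\,\Lambda_M\,\mathbb{E}\Big[\sup_{s\le t\wedge\tau_R}|N^{\eps}(s)|\Big]\le\tfrac12\,g_R(t)+C_M\int_0^t g_R(s)\,ds,
\end{align*}
with $C_M$ independent of $\eps\in(0,1]$ and of $R$. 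Absorbing the first term on the right and applying the deterministic Gronwall lemma yields $g_R(t)\le C(T,M)$ uniformly in $R$ and $\eps$; letting $R\to\infty$ (monotone convergence, since $\sup_{s\le t\wedge\tau_R}\|{\tt B}(v^{\eps}(s))\|_{L^2}^{2}\uparrow\sup_{s\le t}\|{\tt B}(v^{\eps}(s))\|_{L^2}^{2}$ a.s.) gives the first asserted bound. Keeping the dissipation term in the pathwise inequality and taking expectations then bounds $\mathbb{E}\int_0^T\|\nabla v^{\eps}\|_{L^p}^{p}\,ds$, and \eqref{esti:operator-B-grad} together with the Poincar\'e inequality upgrades this to $\mathbb{E}\int_0^T\|{\tt B}(v^{\eps}(s))\|_{W_0^{1,p}}^{p}\,ds\le C$.

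I expect the Gronwall step to be the main obstacle. Because $h^{\eps}$ is random and not independent of $v^{\eps}$, the coefficient $\psi^{\eps}$ multiplying $\|{\tt B}(v^{\eps})\|_{L^2}^{2}$ is random, so one cannot first take expectations and then invoke Gronwall; the remedy is the \emph{deterministic} a.s.\ bound $\int_0^T\psi^{\eps}\le L_M$ (uniform in $\eps$), which makes a pathwise Gronwall on $[0,\tau_R]$ available, after which one passes to expectations and removes the stochastic integral by BDG, the stopping-time truncation being exactly what turns that integral into a genuine martingale and legitimises the absorption. The rest is bookkeeping: one must check that $C_0$, $L_M$, $\Lambda_M$, $C_M$ and the final Gronwall constant are all independent of $\eps\in(0,1]$, which is precisely what delivers the supremum over $\eps$.
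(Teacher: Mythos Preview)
Your proof is correct, and the core It\^o/coercivity/BDG ingredients coincide with the paper's. The genuine difference is in how you handle the random controlled-drift term $\int_0^t|h^{\eps}(s)|\,\|{\tt B}(v^{\eps}(s))\|_{L^2}^{2}\,ds$. The paper takes expectation \emph{first} and then, by H\"older in time plus Young's inequality, splits this term as $C_M\delta\,\mathbb{E}\big[\sup_{s\le t}\|{\tt B}(v^{\eps}(s))\|_{L^2}^{2}\big]+C_{M,\delta}\,\mathbb{E}\int_0^t\|{\tt B}(v^{\eps}(s))\|_{L^2}^{2}\,ds$ (exactly as for the BDG term), absorbs the $\delta$-piece, and runs Gronwall on $t\mapsto\mathbb{E}\big[\sup_{s\le t}\|{\tt B}(v^{\eps}(s))\|_{L^2}^{2}\big]$. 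You instead run Gronwall \emph{pathwise} first, using directly the a.s.\ bound $\int_0^T\psi^{\eps}\le L_M$, and only afterwards take expectations and estimate the martingale via BDG. Your route avoids the H\"older/Young splitting for the $h^{\eps}$-term and is arguably cleaner there, at the cost of the explicit stopping-time localisation; the paper dispenses with stopping times, implicitly relying on the a-priori $L^2(\Omega;C([0,T];L^2))$ regularity of $v^{\eps}$ to justify the absorption. Your closing remark that ``one cannot first take expectations and then invoke Gronwall'' is therefore slightly too strong: the paper shows one can, once the random-coefficient term has been split via H\"older--Young exactly as the stochastic integral is.
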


\begin{proof}
By  applying It$\hat{o}$ formula to the functional $x\mapsto \|x\|_{L^2}^2$, integration by parts formula, \eqref{esti:2} and Cauchy-Schwartz inequality, we have 

\begin{align*}
     \left \| {\tt B}(v^{\eps}(t)) \right \|_{L^2}^{2}  = &  \left \| {\tt B}(u_0) \right \|_{L^2}^{2} -2 \int_0^t \left<{\tt A}(\nabla v^{\eps}(s)), {\tt B}(v^{\eps}(s)) \right>\,ds + 2 \int_0^t  \left< \sigma(v^{\eps}(s)) \,
     h^{\eps}(s), {\tt B}(v^{\eps}(s))\right>\,ds \notag \\
    &  \qquad + \eps \int_0^t  \left \| \sigma(v^{\eps}(s))  \right \|_{L^2}^{2}\,ds  + 2 \sqrt{\eps} \int_0^t  \left<{\tt B}(v^{\eps}(s)), \sigma(v^{\eps}(s)) \, dW(s) \right> \notag \\
 & \leq \left \| {\tt B}(u_0) \right \|_{L^2}^{2} -2C_3C_1 \int_0^t\|\nabla v^\eps(s)\|_{L^p}^p\,ds + 2C_4 \|K_1\|_{L^1}\,t \notag \\
   &\quad   + C \int_0^t \|v^\eps(s)\|_{L^2} \|{\tt B}(v^\eps(s))\|_{L^2} |h^\eps(s)|\,ds + C\eps \int_0^t  \|v^\eps(s)\|_{L^2}^2\,ds \notag \\
   & \qquad  + 2 \sqrt{\eps} \int_0^t  \left<{\tt B}(v^{\eps}(s)), \sigma(v^{\eps}(s)) \, dW(s) \right> \notag \\
   & \le \left \| {\tt B}(u_0) \right \|_{L^2}^{2} -2C_3C_1 \int_0^t\|\nabla v^\eps(s)\|_{L^p}^p\,ds + 2C_4 \|K_1\|_{L^1} t \notag \\
   & \quad + C \int_0^t \left( \eps + |h^\eps(s)| \right)  \left \| {\tt B}(v^{\eps}(s)) \right \|_{L^2}^{2}\,ds
    + 2 \sqrt{\eps} \int_0^t  \left<{\tt B}(v^{\eps}(s)), \sigma(v^{\eps}(s)) \, dW(s) \right> \,,
\end{align*}
where in the last inequality we have used \eqref{B:reverse-L2}. 
 Thus, by Poincare inequality, we obtain
\begin{align}
  & \displaystyle  \left \| {\tt B}(v^{\eps}(t)) \right \|_{L^2}^{2} + C \int_{0}^{t} \left \|  v^{\eps}(s) \right \|_{W_{0}^{1,p}}^{p} ds \notag \\
   & \leq \left \| {\tt B}(u_0) \right \|_{L^2}^{2}  + C \|K_1\|_{L^1} t+ C \int_0^t \left( \eps + |h^\eps(s)| \right)  \left \| {\tt B}(v^{\eps}(s)) \right \|_{L^2}^{2}\,ds \notag \\
   & \qquad +  2 \sqrt{ \eps} \int_0^t  \left<{\tt B}(v^{\eps}(s)), \sigma(v^{\eps}(s)) \, dW(s) \right>. \label{eq:est}
\end{align} 
Discarding nonnegative term and taking the expectation, we get 
\begin{align} \label{eq:est2.1.0}
  \mathbb{E} \bigg[\sup_{s \in [0,t]} \left \| {\tt B}(v^{\eps}(s)) \right \|_{L^2}^{2} \bigg]  & \leq 
  \left \| {\tt B}(u_0) \right \|_{L^2(D)}^{2}  + C \|K_1\|_{L^1} t+ C \int_0^t \left( \eps + |h^\eps(s)| \right) 
   \left \| {\tt B}(v^{\eps}(s)) \right \|_{L^2}^{2}\,ds \notag \\
  & \quad \quad +  2 \, \sqrt{\eps} \, \mathbb{E} \Bigg[\sup_{s \in [0,t]} \left| \int_{0}^{s}   \left< {\tt B}(v^{\eps}(r)), \sigma(v^{\eps}(r)) \, dW(r) \right> \right|  \Bigg]. 
\end{align}
By applying the Burkh$\ddot{o}$lder-Davis-Gundy inequality,  the assumption \ref{A4}, \eqref{B:reverse-L2} and Young's inequality, one has
\begin{align} \label{eq:est2.1.1}
    & 2 \, \sqrt{\eps} \, \mathbb{E} \Bigg[\sup_{s \in [0,t]} \left| \int_{0}^{s}   \left<{\tt B}(v^{\eps}(r)), \sigma(v^{\eps}(r)) \, dW(r) \right> \right|  \Bigg]  \notag \\
     & \leq 2 \, \sqrt{\eps} \, \mathbb{E} \Bigg[\left( \int_{0}^{t} \left \| {\tt B}(v^{\eps}(s)) \right \|_{L^2}^{2} \left \| \sigma(v^{\eps}(s)) \right \|_{L^2}^{2} ds \right)^{\frac{1}{2}} \Bigg]\notag \\
     & \leq 2 \, \sqrt{\eps} \, \mathbb{E} \Bigg[ \left(\sup_{s \in [0,t]}\left \| {\tt B}(v^{\eps}(s)) \right \|_{L^2}^{2} C \int_{0}^{t}  \left \| {\tt B}(v^{\eps}(s))\right \|_{L^2}^{2} ds \right)^\frac{1}{2} \Bigg] \notag \\
     & \leq \sqrt{\eps} \, \delta \, \mathbb{E} \bigg[\sup_{s \in [0,t]}\left \| {\tt B}(v^{\eps}(s)) \right \|_{L^2}^{2} \bigg] + \sqrt{\eps} \, C_{\delta^2 } \, \mathbb{E} \Bigg[ \int_{0}^{t}  \left \| {\tt B}(v^{\eps}(s)) \right \|_{L^2}^{2} ds  \Bigg]\,
 \end{align}
 for some $\delta>0$. 
Applying Holder's inequality and Young's inequality, we obtain 
\begin{align} \label{eq:est2.1.2}
      \mathbb{E} \left[\int_{0}^{t} \left|h^{\eps} \right| \left \| {\tt B}(v^{\eps}(s)) \right \|_{L^2}^{2} \,ds \right] & \leq  \mathbb{E} \left[\bigg(\int_{0}^{t} \left|h^{\eps} \right|^2 \,ds \bigg)^{\frac{1}{2}} \bigg(\int_{0}^{t} \left \| {\tt B}(v^{\eps}(s)) \right \|_{L^2}^{4} \,ds \bigg)^{\frac{1}{2}} \right]\notag \\
     &\leq C_M  \mathbb{E} \Bigg[\sup_{s \in [0,t]}\left \| {\tt B}(v^{\eps}(s)) \right \|_{L^2} \left( \int_{0}^{t}  \left \| v^{\eps}(s) \right \|_{L^2}^{2} ds \right)^{\frac{1}{2}}\Bigg] \notag \\
      & \leq C_M \delta \, \mathbb{E} \bigg[\sup_{s \in [0,t]}\left \| {\tt B}(v^{\eps}(s)) \right \|_{L^2}^{2} \bigg] + C_M C_{\delta^2 } \mathbb{E} \Bigg[ \int_{0}^{t}  \left \| {\tt B}(v^{\eps}(s)) \right \|_{L^2}^{2} ds \Bigg].
\end{align}
Therefore using \eqref{eq:est2.1.1} and \eqref{eq:est2.1.2} in \eqref{eq:est2.1.0}, we have for all $\eps \in (0,1]$ 
\begin{align*}
     \mathbb{E} \bigg[\sup_{s \in [0,t]} \left \| {\tt B}(v^{\eps}(s)) \right \|_{L^2}^{2} \bigg] &
     \le    \left \| {\tt B}(u_0) \right \|_{L^2}^{2}  + C t\|K_1\|_{L^1}  + C(M, \delta) \mathbb{E}\left[ \int_{0}^{t}  \left \| {\tt B}(v^{\eps}(s)) \right \|_{L^2}^{2} ds\right] \notag \\ & \qquad + (1+C_M) \, \delta \, \mathbb{E} \bigg[\sup_{s \in [0,t]}\left \| {\tt B}(v^{\eps}(s)) \right \|_{L^2}^{2} \bigg]\,.
    \end{align*}
    Choosing $0<\delta < \frac{1}{1+ C_M}$ and then applying Gronwall's lemma, we have 
      \begin{align} \label{eq:est2.1.3}
        \mathbb{E} \bigg[\sup_{s \in [0,T]} \left \|{\tt B}( v^{\eps}(s)) \right \|_{L^2}^{2} \bigg] & \leq C. 
         \end{align}
      By using \eqref{eq:est2.1.2} and \eqref{eq:est2.1.3}, we have   from \eqref{eq:est}
        \begin{align}
        \mathbb{E} \Big[\int_{0}^{T}  \left \|  v^{\eps}(s) \right \|_{W_{0}^{1,p}}^{p} ds \Big] 
        &\leq \left \| {\tt B}(u_0) \right \|_{L^2}^{2}  + C t\|K_1\|_{L^1}  + C(M,T)\,  \mathbb{E} \bigg[\sup_{s \in [0,T]}\left \| {\tt B}(v^{\eps}(s)) \right \|_{L^2}^{2} \bigg]  \le C\,. \notag
\end{align}
In view of Poincare inequality, \eqref{esti:operator-B-grad} and above estimation, we conclude that
\begin{align}
   \mathbb{E} \Big[\int_{0}^{T}  \left \| {\tt B}( v^{\eps}(s)) \right \|_{W_{0}^{1,p}}^{p} ds \Big] \le C\,.  \label{eq:est2.1.4}
\end{align}
Hence the assertion follows from  \eqref{eq:est2.1.3} and \eqref{eq:est2.1.4}.
\end{proof}
 Due to its stochastic setup, a-priori bounds in Lemma \ref{lem:est2.1} is not sufficient to show convergence of $v^\eps$ to $u_h$. We need more regularity estimate of $v^\eps$---which is given by the following lemma. 
\begin{lem}\label{lem:est2.2}
Let $v^\eps$ be a solution of \eqref{eq:epsilon-main}. Then there exists a constant $C>0$, independent of $\eps>0$ such that
\begin{align}
   \sup_{\eps >0} \mathbb{E} \Bigg[\sup_{s \in [0,T]}  \left \| {\tt B}(v^{\eps}(s)) \right \|_{L^2}^{4} + \Big(\int_{0}^{T}  \left \|  v^{\eps}(s) \right \|_{W_{0}^{1,p}}^{p} ds \Big)^2\Bigg] \leq C. \label{esti:regularity-cond-c2}
   \end{align}
   \end{lem}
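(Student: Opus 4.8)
Recall the energy estimate \eqref{eq:est} obtained in the proof of Lemma \ref{lem:est2.1}. Writing $X^{\eps}(t):=\|{\tt B}(v^{\eps}(t))\|_{L^2}^{2}$ and $Y^{\eps}(t):=\int_{0}^{t}\|v^{\eps}(s)\|_{W_{0}^{1,p}}^{p}\,ds$, that estimate reads, $\mathbb{P}$-a.s. and for every $t\in[0,T]$,
\begin{align*}
X^{\eps}(t)+C\,Y^{\eps}(t)\ \le\ A_0\;+\;C\int_{0}^{t}\bigl(\eps+|h^{\eps}(s)|\bigr)X^{\eps}(s)\,ds\;+\;M^{\eps}(t),
\end{align*}
where $A_0:=\|{\tt B}(u_0)\|_{L^2}^{2}+C\|K_1\|_{L^1}T$ is deterministic and $M^{\eps}(t):=2\sqrt{\eps}\int_{0}^{t}\langle{\tt B}(v^{\eps}(s)),\sigma(v^{\eps}(s))\,dW(s)\rangle$ is a continuous local martingale whose quadratic variation, by \ref{A4} and \eqref{B:reverse-L2}, obeys $\langle M^{\eps}\rangle_{t}\le C\eps\int_{0}^{t}X^{\eps}(s)^{2}\,ds$. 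The plan is to turn this into a fourth-moment bound by a pathwise Gronwall argument followed by squaring. To keep all moments finite I would first run the whole argument with $t$ replaced by $t\wedge\theta_R$, $\theta_R:=\inf\{t:X^{\eps}(t)>R\}\wedge T$, and then pass to the limit $R\to\infty$ by monotone convergence; a priori only the second moment \eqref{eq:est2.1.3} is available, so this localisation is needed, though routine, and I suppress it below.

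Discarding $C\,Y^{\eps}(t)\ge0$ and bounding $M^{\eps}(t)\le\sup_{r\le t}|M^{\eps}(r)|$, the above becomes $X^{\eps}(t)\le\alpha(t)+C\int_{0}^{t}(\eps+|h^{\eps}(s)|)X^{\eps}(s)\,ds$ with the nondecreasing forcing term $\alpha(t):=A_0+\sup_{r\le t}|M^{\eps}(r)|$. Gronwall's inequality then gives $X^{\eps}(t)\le\alpha(t)\exp\bigl(C\int_{0}^{t}(\eps+|h^{\eps}(s)|)\,ds\bigr)$, and since $h^{\eps}\in\mathcal{A}_M$ (so $\int_{0}^{T}|h^{\eps}(s)|^{2}\,ds\le M$ $\mathbb{P}$-a.s.) and $\eps\le1$, the exponent is dominated by the deterministic constant $C(T+\sqrt{TM})$. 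Hence $\sup_{t\le T}X^{\eps}(t)\le e^{C(T+\sqrt{TM})}\bigl(A_0+\sup_{r\le T}|M^{\eps}(r)|\bigr)$ $\mathbb{P}$-a.s., so squaring and taking expectations,
\begin{align*}
\mathbb{E}\Bigl[\sup_{t\le T}X^{\eps}(t)^{2}\Bigr]\ \le\ C\Bigl(A_0^{2}+\mathbb{E}\bigl[\sup_{r\le T}|M^{\eps}(r)|^{2}\bigr]\Bigr).
\end{align*}
Doob's maximal inequality (or the Burkh$\ddot{o}$lder-Davis-Gundy inequality) together with the quadratic variation bound above yields $\mathbb{E}[\sup_{r\le t}|M^{\eps}(r)|^{2}]\le4\,\mathbb{E}[\langle M^{\eps}\rangle_{t}]\le C\int_{0}^{t}\mathbb{E}[\sup_{u\le s}X^{\eps}(u)^{2}]\,ds$; inserting this into the previous display and applying Gronwall's lemma to $t\mapsto\mathbb{E}[\sup_{s\le t}X^{\eps}(s)^{2}]$ gives the bound on the first term of \eqref{esti:regularity-cond-c2} with a constant independent of $\eps\in(0,1]$.

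For the second term I would return to the energy inequality keeping the coercivity term, $C\,Y^{\eps}(T)\le A_0+C\int_{0}^{T}(\eps+|h^{\eps}(s)|)X^{\eps}(s)\,ds+\sup_{r\le T}|M^{\eps}(r)|$, square it, and use Cauchy-Schwarz in time with $\int_{0}^{T}(\eps+|h^{\eps}(s)|)\,ds\le T+\sqrt{TM}$ $\mathbb{P}$-a.s. to get $\bigl(\int_{0}^{T}(\eps+|h^{\eps}(s)|)X^{\eps}(s)\,ds\bigr)^{2}\le(T+\sqrt{TM})^{2}\sup_{t\le T}X^{\eps}(t)^{2}$; taking expectations, the right-hand side is then controlled by $\mathbb{E}[\sup_{t\le T}X^{\eps}(t)^{2}]$ and $\mathbb{E}[\sup_{r\le T}|M^{\eps}(r)|^{2}]$, both already shown bounded uniformly in $\eps$, whence $\mathbb{E}[Y^{\eps}(T)^{2}]\le C$. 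Adding the two estimates proves \eqref{esti:regularity-cond-c2}. The only genuinely delicate point is the stochastic term: after passing to fourth moments the maximal (Burkh$\ddot{o}$lder-Davis-Gundy) bound must reproduce precisely $\int_{0}^{t}\mathbb{E}[\sup_{u\le s}X^{\eps}(u)^{2}]\,ds$ so that the Gronwall loop closes, which is exactly why \eqref{B:reverse-L2} is invoked — to dominate $\|\sigma(v^{\eps})\|_{L^2}^{2}$ by $\|{\tt B}(v^{\eps})\|_{L^2}^{2}$; establishing finiteness of the fourth moment beforehand via the stopping times $\theta_R$ is routine.
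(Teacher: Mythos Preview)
Your proof is correct and takes a somewhat different route from the paper's. The paper proceeds in three stages: it first squares \eqref{eq:est}, takes expectation, uses the It\^o isometry (and H\"older's inequality for the $h^{\eps}$-term) to close a Gronwall loop on $t\mapsto\mathbb{E}\bigl[\|{\tt B}(v^{\eps}(t))\|_{L^2}^{4}\bigr]$, obtaining $\sup_{t}\mathbb{E}[\,\cdot\,]$; from this it deduces the bound on $\mathbb{E}\bigl[(\int_{0}^{T}\|v^{\eps}\|_{W_{0}^{1,p}}^{p}\,ds)^{2}\bigr]$; and only then, in a separate pass, applies Burkh\"older--Davis--Gundy to upgrade to $\mathbb{E}[\sup_{t}\,\cdot\,]$. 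Your argument instead exploits the constraint $h^{\eps}\in\mathcal{A}_M$ \emph{pathwise} at the outset: since $\int_{0}^{T}|h^{\eps}|\,ds\le\sqrt{TM}$ a.s., the Gronwall factor $\exp\bigl(C\int_{0}^{t}(\eps+|h^{\eps}|)\,ds\bigr)$ is bounded by a deterministic constant, which lets you control $\sup_{t}X^{\eps}(t)$ directly by $A_0+\sup_{t}|M^{\eps}(t)|$ before any moments are taken. This collapses the paper's three-step scheme into essentially one square-and-Gronwall step in expectation, and it avoids the intermediate $\sup_{t}\mathbb{E}[\,\cdot\,]$ estimate altogether. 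The price is the need for the localisation $\theta_R$ to justify finiteness along the way (which the paper sidesteps by bootstrapping from $\sup_{t}\mathbb{E}[\,\cdot\,]$), but as you note this is routine. Both approaches are valid; yours is slightly more economical and makes more direct use of the a.s.\ bound encoded in $\mathcal{A}_M$.
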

\begin{proof}
First we prove that 
\begin{align} \label{eq:est2.2.1}
 \sup_{s \in [0,T]}  \mathbb{E} \Big[ \left \|{\tt B}( v^{\eps}(s)) \right \|_{L^2}^{4} \Big] & \leq C. 
\end{align}
Squaring both sides of \eqref{eq:est} and taking expectation, we have
\begin{align}
\mathbb{E}\left[ \left \| {\tt B}(v^{\eps}(t)) \right \|_{L^2}^{4}\right] & \leq 4\left( \,\left \| {\tt B}(u_0) \right \|_{L^2}^{4}  + C t^2 \|K_1\|_{L^1}^2\right)+ C
   \mathbb{E}\left[ \bigg(\int_{0}^{t} (\left|h^{\eps} \right|+\eps) \left \| {\tt B}(v^{\eps}(s)) \right \|_{L^2}^{2} ds \bigg)^{2}\right]  \notag \\
    & \qquad +  16 \, \eps\, \mathbb{E}\left[ \bigg(\int_{0}^{t}   \left<{\tt B}(v^{\eps}(s)), \sigma(v^{\eps}(s)) \, dW(s) \right> \bigg)^{2}\right]
    \equiv \mathcal{I}_8 + \mathcal{I}_9 + \mathcal{I}_{10}\,. \label{eq:for-cond-c2-1}
    \end{align}
    Applying Holder's  and Jensen's inequalities and using the fact that $h^{\eps} \in \mathcal{A}_M$, we have
    \begin{align}
    \label{eq:est2}
\mathcal{I}_9  \leq C \,\mathbb{E} \Bigg[\bigg (\int_{0}^{t} \left|h^{\eps} \right|^{2} ds +1 \bigg) \bigg (\int_{0}^{t} \left \| {\tt B}(v^{\eps}(s)) \right \|_{L^2}^{4} ds \bigg) \Bigg]  
 \leq C_M \,\mathbb{E} \Bigg[\int_{0}^{t} \left \| {\tt B}(v^{\eps}(s)) \right \|_{L^2}^{4} ds \Bigg]. 
\end{align}
We use Ito-isometry, the assumption \ref{A3} and  \eqref{B:reverse-L2}  to estimate $\mathcal{I}_{10}$ as 
\begin{align} \label{eq:est2.2.2}
 \mathcal{I}_{10} = 16\, \eps\, \mathbb{E}\left[ \int_0^t \left<{\tt B}(v^{\eps}(s)), \sigma(v^{\eps}(s)) \right> ^2\,ds\right]  \le C \mathbb{E} \Bigg[\int_{0}^{t} \left \|{\tt B}( v^{\eps}(s)) \right \|_{L^2}^{4} ds \Bigg].
\end{align}
Combining  \eqref{eq:est2} and \eqref{eq:est2.2.2} in \eqref{eq:for-cond-c2-1}, we obtain
\begin{align*}
   \mathbb{E} \Big[  \left \| {\tt B}(v^{\eps}(t)) \right \|_{L^2}^{4} \Big] \leq C\Bigg\{ \,\left \| u_0 \right \|_{L^2}^{4}  + T^2 \|K_1\|_{L^1}^2+ \mathbb{E} \Big[\int_{0}^{t} \left \| {\tt B}(v^{\eps}(s)) \right \|_{L^2}^{4} ds \Big]\Bigg\}. 
\end{align*}
Hence, an application of Gronwall's lemma yields the result \eqref{eq:est2.2.1}.
\vspace{0.2cm}

Again from \eqref{eq:est} together with \eqref{eq:est2} and \eqref{eq:est2.2.2}, we have 
\begin{align}
    \mathbb{E} \Bigg[ \Big(\int_{0}^{T}  \left \|  v^{\eps}(s)  \ \right \|_{W_{0}^{1,p}}^{p} ds\Big)^2 \Bigg] & 
    \le C \Bigg\{  \left \| u_0 \right \|_{L^2}^{4} + T^2 \|K_1\|_{L^1}^2+ \int_{0}^{T} \mathbb{E}\Big[ \left \| {\tt B}(v^{\eps}(s)) \right \|_{L^2}^{4}\Big] ds \Bigg\} \notag \\
    & \le  C \Bigg\{  \left \| u_0 \right \|_{L^2}^{4} + T^2 \|K_1\|_{L^1}^2 + \sup_{t \in [0,T]} \mathbb{E} \Big[  \left \| {\tt B}(v^{\eps}(t)) \right \|_{L^2}^{4} \Big]\Bigg\}\le C\,, \notag
\end{align}
where in the last inequality we have used the uniform estimate \eqref{eq:est2.2.1}. Thus, thanks to Poincare inequality and \eqref{esti:operator-B-grad}, we get
\begin{align}
   \mathbb{E} \Bigg[ \Big(\int_{0}^{T}  \left \|  {\tt B}(v^{\eps}(s)) \ \right \|_{W_{0}^{1,p}}^{p} ds\Big)^2 \Bigg] \le C    \mathbb{E} \Bigg[ \Big(\int_{0}^{T}  \left \|  v^{\eps}(s)  \ \right \|_{W_{0}^{1,p}}^{p} ds\Big)^2 \Bigg] \le C\,.  \label{esti:regularity-sobolev-cond-c1}
\end{align}

Again squaring the both sides of \eqref{eq:est}, we have
\begin{align}
\mathbb{E}\left[ \sup_{s\in [0,t]} \left \| {\tt B}(v^{\eps}(s)) \right \|_{L^2}^{4}\right] & \leq 4\left( \,\left \| {\tt B}(u_0) \right \|_{L^2}^{4}  + C t^2 \|K_1\|_{L^1}^2\right)+ C
   \mathbb{E}\left[ \bigg(\int_{0}^{t} (\left|h^{\eps} \right|+\eps) \left \| {\tt B}(v^{\eps}(s)) \right \|_{L^2}^{2} ds \bigg)^{2}\right]  \notag \\
    & \qquad +  16 \, \eps\, \mathbb{E}\left[  \sup_{s\in [0,t]}\bigg(\int_{0}^{s}   \left<{\tt B}(v^{\eps}(r)), \sigma(v^{\eps}(r)) \, dW(r) \right> \bigg)^{2}\,\right] \notag \\
    &  \equiv \mathcal{I}_8 + \mathcal{I}_9 + \mathcal{I}_{11}\,. \label{eq:for-cond-c2-1-1}
    \end{align}
From \eqref{eq:est2}, we bound $\mathcal{I}_9$ as
\begin{align}
\mathcal{I}_9 \le C \int_0^t \mathbb{E}\left[ \sup_{r\in [0,s]}\|{\tt B}(v^\eps(r))\|_{L^2}^4\right]\,ds\,. \label{esti:regu-cond-c1-drift}
\end{align}
Thanks to Burkh$\ddot{o}$lder-Davis-Gundy inequality and \eqref{eq:est2.2.2}, we have
\begin{align}
\mathcal{I}_{11}\le C  \mathbb{E}\left[ \int_0^t \left<{\tt B}(v^{\eps}(s)), \sigma(v^{\eps}(s)) \right> ^2\,ds\right] \le C 
 \int_0^t \mathbb{E}\left[ \sup_{r\in [0,s]}\|{\tt B}(v^\eps(r))\|_{L^2}^4\right]\,ds\,. \label{esti:regu-cond-c1-diffusion}
\end{align}
We use \eqref{esti:regu-cond-c1-drift} and \eqref{esti:regu-cond-c1-diffusion} in \eqref{eq:for-cond-c2-1-1} and apply Gronwall's lemma to conclude
\begin{align}
\mathbb{E}\left[ \sup_{s\in [0,T]} \left \| {\tt B}(v^{\eps}(s)) \right \|_{L^2}^{4}\right] \le C\,.  \label{esti:regularity-l2-4-sup-cond-c1}
\end{align}
Hence the estimate \eqref{esti:regularity-cond-c2} follows from \eqref{esti:regularity-sobolev-cond-c1} and \eqref{esti:regularity-l2-4-sup-cond-c1}. This completes the proof. 
\end{proof}
Like its deterministic counterpart, we need to derive uniform bound of ${\tt B}(v^\eps)$ in some appropriate fractional Sobolev space.
\begin{lem} \label{lem:cpt2}
The following estimation holds: for any $\alpha \in (0,\frac{1}{2p})$,
$$\sup_{\eps} \mathbb{E}\Big[ \left\| {\tt B}(v^{\eps})\right\|_{W^{\alpha,2}([0,T];{W^{-1,p'}})}\Big] < \infty. $$
\end{lem}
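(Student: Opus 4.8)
The plan is to mirror the proof of Lemma~\ref{lem:cpt0}, now carrying out every estimate in expectation and invoking the higher--moment bounds of Lemma~\ref{lem:est2.2} to control the time increments in the fractional seminorm. First I would rewrite \eqref{eq:epsilon-main} in integrated form,
\[
{\tt B}(v^{\eps}(t)) = {\tt B}(u_0) + \int_0^t {\rm div}_x {\tt A}(\nabla v^{\eps})\,ds + \int_0^t \sigma(v^{\eps})h^{\eps}(s)\,ds + \sqrt{\eps}\int_0^t \sigma(v^{\eps})\,dW(s) \equiv \sum_{i=0}^{3}\mathcal{J}_i^{\eps}(t),
\]
and, since $\mathbb{E}\big[\|{\tt B}(v^{\eps})\|_{W^{\alpha,2}([0,T];W^{-1,p'})}\big] \le \big(\mathbb{E}\big[\|{\tt B}(v^{\eps})\|_{W^{\alpha,2}([0,T];W^{-1,p'})}^2\big]\big)^{1/2}$, it suffices to bound, uniformly in $\eps\in(0,1]$, the quantity $\mathbb{E}\big[\int_0^T\|{\tt B}(v^{\eps}(t))\|_{W^{-1,p'}}^2\,dt\big]$ together with, after Fubini, $\int_0^T\int_0^T |t-s|^{-1-2\alpha}\,\mathbb{E}\big[\|{\tt B}(v^{\eps}(t))-{\tt B}(v^{\eps}(s))\|_{W^{-1,p'}}^2\big]\,dt\,ds$. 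The first quantity is handled exactly as in \eqref{esti:1-frac-sov}: the embedding $W_0^{1,p}\hookrightarrow W^{-1,p'}$, Young's inequality (valid since $p>2$) and Lemma~\ref{lem:est2.1}.

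For the double integral I would estimate the second moment of each increment $\mathcal{J}_i^{\eps}(t)-\mathcal{J}_i^{\eps}(s)$ for $s<t$ separately. The term $\mathcal{J}_0^{\eps}$ is constant in $t$ and drops out. For the drift $\mathcal{J}_1^{\eps}$, arguing as in Lemma~\ref{lem:cpt0} with $\|{\rm div}_x g\|_{W^{-1,p'}}\le\|g\|_{L^{p'}}$, with \eqref{A:boundedness} and H\"older in time, one gets $\|\mathcal{J}_1^{\eps}(t)-\mathcal{J}_1^{\eps}(s)\|_{W^{-1,p'}}^2 \le C|t-s|^{2/p}\big(\int_0^T\|\nabla v^{\eps}(r)\|_{L^p}^p\,dr+1\big)^{2/p'}$; since $2/p'<2$ (as $p>2$), taking expectation and using Lemma~\ref{lem:est2.2} gives $\mathbb{E}\big[\|\mathcal{J}_1^{\eps}(t)-\mathcal{J}_1^{\eps}(s)\|_{W^{-1,p'}}^2\big]\le C|t-s|^{2/p}$. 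For the controlled drift $\mathcal{J}_2^{\eps}$, Cauchy--Schwarz in time, assumption~\ref{A4}, \eqref{B:reverse-L2}, the bound $\int_0^T|h^{\eps}(r)|^2\,dr\le M$ a.s., and the embedding $L^2\hookrightarrow W^{-1,p'}$ give $\|\mathcal{J}_2^{\eps}(t)-\mathcal{J}_2^{\eps}(s)\|_{W^{-1,p'}}^2 \le C|t-s|\sup_{r\in[0,T]}\|{\tt B}(v^{\eps}(r))\|_{L^2}^2$, hence $\mathbb{E}[\cdot]\le C|t-s|$ by Lemma~\ref{lem:est2.1}. For the stochastic term $\mathcal{J}_3^{\eps}$, the It\^o isometry together with \ref{A4}, \eqref{B:reverse-L2}, the embedding $L^2\hookrightarrow W^{-1,p'}$ and $\eps\le 1$ yields $\mathbb{E}\big[\|\mathcal{J}_3^{\eps}(t)-\mathcal{J}_3^{\eps}(s)\|_{W^{-1,p'}}^2\big]\le C\eps\,\mathbb{E}\int_s^t\|{\tt B}(v^{\eps}(r))\|_{L^2}^2\,dr\le C|t-s|$, again uniformly in $\eps$ by Lemma~\ref{lem:est2.1}.

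Combining these, $\mathbb{E}\big[\|{\tt B}(v^{\eps}(t))-{\tt B}(v^{\eps}(s))\|_{W^{-1,p'}}^2\big]\le C\big(|t-s|^{2/p}+|t-s|\big)\le C|t-s|^{2/p}$ for $|t-s|\le T$, so the binding exponent is $2/p$, and the double integral $\int_0^T\int_0^T |t-s|^{2/p-1-2\alpha}\,dt\,ds$ converges precisely when $2\alpha<2/p$, i.e.\ $\alpha<1/p$; in particular it converges for every $\alpha\in(0,\tfrac1{2p})$, with a constant independent of $\eps$. This gives $\sup_{\eps}\mathbb{E}\big[\|{\tt B}(v^{\eps})\|_{W^{\alpha,2}([0,T];W^{-1,p'})}\big]<\infty$. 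I expect the only genuinely delicate point to be the drift term $\mathcal{J}_1^{\eps}$: because $2/p'>1$, one cannot close its estimate with the first--moment bound of Lemma~\ref{lem:est2.1} alone, and it is exactly here that the quadratic control on $\big(\int_0^T\|v^{\eps}(s)\|_{W_0^{1,p}}^p\,ds\big)$ from Lemma~\ref{lem:est2.2} is indispensable; the stochastic term, by contrast, is routine once we only ask for its second moment.
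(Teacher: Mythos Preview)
Your proposal is correct and follows essentially the same route as the paper: the same decomposition ${\tt B}(v^{\eps})=\sum_{i=0}^3\mathcal{J}_i^{\eps}$, the same increment estimates (H\"older plus \eqref{A:boundedness} for $\mathcal{J}_1^{\eps}$, Cauchy--Schwarz and $h^{\eps}\in\mathcal{A}_M$ for $\mathcal{J}_2^{\eps}$, It\^o isometry for $\mathcal{J}_3^{\eps}$), and the same crucial use of Lemma~\ref{lem:est2.2} to close the drift term since $2/p'>1$. You even observe, as the paper does implicitly, that the binding exponent $2/p$ actually gives convergence for all $\alpha<1/p$, so the stated range $\alpha\in(0,\tfrac1{2p})$ is comfortably covered.
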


\begin{proof}
 Since $v^\eps$ is the unique solution of \eqref{eq:epsilon-main}, we have $\mathbb{P}$-a.s., and for all $t\in [0,T]$
\begin{align*}
    {\tt B}(v^{\eps}(t)) &= {\tt B}(u_0) + \int_{0}^{t} {\rm div}_x{\tt A} (\nabla v^{\eps}) \, ds
     + \int_{0}^{t} \sigma(v^{\eps})h^\eps(s) \, ds + \sqrt{\eps} \int_{0}^{t} \sigma(v^{\eps}) \, dW(s)  \notag\\
    &\equiv \mathcal{J}^\eps_0(t)+ \mathcal{J}_{1}^{\eps}(t) + \mathcal{J}_{2}^{\eps}(t) + \mathcal{J}_{3}^{\eps}(t) \,.
\end{align*}
By Sobolev embedding $W_{0}^{1,p} \hookrightarrow W^{-1,p'}$ and Lemma \ref{lem:est2.1} , we get
\[ \mathbb{E} \Big[ \int_{0}^{T} \left \|{\tt B}(v^{\eps})  \right \|_{W^{-1,p'}}^{p} \,dt\Big] \leq \mathbb{E}\Big[ \int_{0}^{T} \left \|{\tt B}(v^{\eps})  \right \|_{W_{0}^{1,p}}^{p} \,dt \Big]\leq C. \]
Therefore, to complete the proof, we need to show that for any $\alpha \in (0,\frac{1}{2p})$,
\begin{align}
\mathbb{E}\Big[  \int_{0}^{T}\int_{0}^{T} \frac{\left \|\mathcal{J}_{i}^{\eps}(t) - \mathcal{J}_{i}^{\eps}(s)   \right \|_{W^{-1,p'}}^{2}}{\left | t-s \right |^{1+ 2 \alpha}} \, dt \, ds\Big] \leq C \quad (0\le i\le 3)\,. \label{inq:frac-Sov-main-}
\end{align}
Since $\mathcal{J}_{0}^\eps(t)$ is independent of time, \eqref{inq:frac-Sov-main-}  is satisfied by $\mathcal{J}_{0}^\eps(t)$ for any  $\alpha \in (0,1)$.
\vspace{0.2cm}

W.L.O.G.\, we assume that $s < t$. By employing \eqref{A:boundedness} and H\"{o}lder's inequality, one has
\begin{align}
  &  \mathbb{E}\Big[ \left \|\mathcal{J}_{1}^{\eps}(t) - \mathcal{J}_{1}^{\eps}(s)   \right \|_{W^{-1,p'}}^{2}\Big]  
  = \mathbb{E} \left[ \left \| \int_{s}^{t} {\rm div}_x{\tt A}( \nabla v^{\eps} )\,dr  \right \|_{W^{-1,p'}}^{2} \right] \notag \\
  &\leq \mathbb{E} \left[ \bigg(\int_{s}^{t} \left \| {\rm div}_x {\tt A}(\nabla v^{\eps})  \right \|_{W^{-1,p'}} \,dr\bigg)^{2}\right] 
     \leq \mathbb{E}\left[ \bigg(\int_{s}^{t} \left \|{\tt A}(\nabla v^{\eps})  \right \|_{L^{p^\prime}} \,dr\bigg)^2\right] \notag \\
   &  \le \mathbb{E}\left[  \bigg( C_2 \int_s^t \|\nabla v^\eps(r)\|_{L^p}^{p-1}\,dr + (t-s)\|K_2\|_{L^{p^\prime}} \bigg)^{2}\right] \notag \\
    &  \le C\mathbb{E}\left[ \left\{ (t-s)^\frac{1}{p}\Big( \int_s^t  \|\nabla v^\eps(r)\|_{L^p}^{p}\,dr\Big)^\frac{p-1}{p} + (t-s)\|K_2\|_{L^{p^\prime}} \right\}^{2}\right] \notag \\
    &  \le C\mathbb{E}\left[  (t-s)^\frac{2}{p}\Big( \int_s^t  \|v^\eps(r)\|_{W_0^{1,p}}^{p}\,dr\Big)^\frac{2(p-1)}{p} + (t-s)^2\|K_2\|_{L^{p^\prime}}^2 \right] \notag \\
     &  \le C\mathbb{E}\left[  (t-s)^\frac{2}{p} \left( 1+ \Big( \int_0^T  \|v^\eps(r)\|_{W_0^{1,p}}^{p}\,dr\Big)^2\right)+ (t-s)^2\|K_2\|_{L^{p^\prime}}^2 \right] \notag \\
    & \leq C (t-s)^{\frac{2}{p}} \left\{1 +\mathbb{E} \left[\bigg( \int_{0}^{T} \left \|v^{\eps}  \right \|_{W_{0}^{1,p}}^{p} \,dr\bigg)^{2}\right] + \|K_2\|_{L^{p^\prime}}^2 \right\}\,.
\end{align}
Thanks to Lemma \ref{lem:est2.2}, we get
\begin{align}
   \mathbb{E} \left[\int_{0}^{T}\int_{0}^{T} \frac{\left \|\mathcal{J}_{1}^{\eps}(t) - \mathcal{J}_{1}^{\eps}(s)   \right \|_{W^{-1,p'}}^{2}}{\left | t-s \right |^{1+ 2 \alpha}} \, dt \, ds\right] \leq C, \quad \, \forall \, \alpha \in (0,\frac{1}{2p}).  \notag
\end{align}
For $\mathcal{J}_{2}^{\eps}$, we estimate as follows. In view of the assumption \ref{A4}, Jensen's inequality, \eqref{B:reverse-L2} and Lemma \ref{lem:est2.1}, 
\begin{align}
    \mathbb{E}\left[ \left \|\mathcal{J}_{2}^{\eps}(t) - \mathcal{J}_{2}^{\eps}(s)   \right \|_{L^{2}}^{2}\right] & \leq  C (t-s) \mathbb{E} \left[\int_{s}^{t}  \left \| v^{\eps} \right \|_{L^2}^{2} \left | h^{\eps}(r) \right |^{2} \,dr \right] \notag \\
    & \leq C (t-s) \, \mathbb{E} \bigg[ \sup_{0\leq t\leq T}\left \|{\tt B}( v^{\eps}(t)) \right \|_{L^2}^{2} \int_{0}^{T}  \left | h^{\eps}(r) \right |^{2} \,dr \bigg] \notag \\
    & \leq C (t-s) \, \mathbb{E} \Big[\sup_{0\leq t\leq T}\left \| {\tt B}(v^{\eps}(t)) \right \|_{L^2}^{2} \Big], \notag
\end{align}
where in the last inequality we have used that $h^\eps \in \mathcal{A}_M$. Thus, since $L^2 \hookrightarrow W^{-1,p'}$, \eqref{inq:frac-Sov-main-} is satisfied by $\mathcal{J}_{2}^{\eps}$ for $\alpha \in (0,\frac{1}{p})$. 
\vspace{0.2cm}

Ito-isometry together with the assumption \ref{A4}, \eqref{B:reverse-L2} and Lemma \ref{lem:est2.1} yields that 
\begin{align}
    \mathbb{E}\left[ \left \|\mathcal{J}_{3}^{\eps}(t) - \mathcal{J}_{3}^{\eps}(s)   \right \|_{L^{2}}^{2}\right] &  \leq C \mathbb{E} \bigg[ \int_{s}^{t}  \left \| v^{\eps}(r) \right \|_{L^2}^{2} \,dr \bigg]  \leq C \mathbb{E} \bigg[ \int_{s}^{t}  \left \|{\tt B}( v^{\eps}(r)) \right \|_{L^2}^{2} \,dr \bigg] \notag \\
    &\le  C (t-s)\, \mathbb{E} \Big[\sup_{0\leq t\leq T}\left \| {\tt B}(v^{\eps}(t)) \right \|_{L^2}^{2} \Big]\le C(t-s)\,.\notag
\end{align}
Hence,  for any $\alpha \in (0,\frac{1}{p})$ the estimation \eqref{inq:frac-Sov-main-} holds true for $\mathcal{J}_{3}^{\eps}$. 
\end{proof}
\noindent\underline{\bf Step B: convergence analysis.} In this step, we discuss about the tightness of the family $\{\mathcal{L}({\tt B}(v^\eps))$ and 
to get almost sure convergence on a new probability space. 
\begin{cor} The sequence 
$\left \{ \mathcal{L}({\tt B}(v^{\eps})) \right \}$ is tight on $L^{2}(D_T)$.
\end{cor}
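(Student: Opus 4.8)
The plan is to combine the uniform moment bounds just established with the compact embedding of Lemma \ref{lem:cpt} and then invoke the Markov inequality, which is the standard route to tightness on a path space. First I would fix the functional-analytic setting for Lemma \ref{lem:cpt}: take $\mathbb{X}=W_0^{1,p}$, $\mathbb{Y}=L^2$ and $\mathbb{X}^\ast=W^{-1,p'}$, so that $\mathbb{X}\subset\mathbb{Y}\subset\mathbb{X}^\ast$ with $\mathbb{X},\mathbb{X}^\ast$ reflexive (as $p>1$) and the embedding $W_0^{1,p}\hookrightarrow L^2$ compact by the Rellich--Kondrachov theorem on the bounded domain $D$. Fixing any $\alpha\in(0,\tfrac{1}{2p})$, Lemma \ref{lem:cpt} with $q=2$ then yields that
\[
\mathcal{W}:=L^2([0,T];W_0^{1,p})\cap W^{\alpha,2}([0,T];W^{-1,p'})
\]
is compactly embedded in $L^2([0,T];L^2)=L^2(D_T)$. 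Consequently, for every $R>0$ the ball
\[
\mathcal{B}_R:=\Big\{v\in\mathcal{W}:\ \|v\|_{L^2([0,T];W_0^{1,p})}+\|v\|_{W^{\alpha,2}([0,T];W^{-1,p'})}\le R\Big\}
\]
is relatively compact in $L^2(D_T)$; I denote by $\overline{\mathcal{B}_R}$ its (compact) closure.

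Next I would bound the quantity $\mathbb{E}\big[\|{\tt B}(v^\eps)\|_{L^2([0,T];W_0^{1,p})}+\|{\tt B}(v^\eps)\|_{W^{\alpha,2}([0,T];W^{-1,p'})}\big]$ uniformly in $\eps$. Since $[0,T]$ is a bounded interval and $p>2$, H\"older's inequality in time gives $\|{\tt B}(v^\eps)\|_{L^2([0,T];W_0^{1,p})}\le C_T\|{\tt B}(v^\eps)\|_{L^p([0,T];W_0^{1,p})}$, and Jensen's inequality together with \eqref{eq:est2.1.4} yields $\mathbb{E}\big[\|{\tt B}(v^\eps)\|_{L^p([0,T];W_0^{1,p})}\big]\le\big(\mathbb{E}\int_0^T\|{\tt B}(v^\eps)\|_{W_0^{1,p}}^p\,dt\big)^{1/p}\le C$. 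The second term is bounded uniformly in $\eps$ directly by Lemma \ref{lem:cpt2}. Hence there exists $C_0<\infty$, independent of $\eps$, with
\[
\sup_{\eps}\ \mathbb{E}\Big[\|{\tt B}(v^\eps)\|_{L^2([0,T];W_0^{1,p})}+\|{\tt B}(v^\eps)\|_{W^{\alpha,2}([0,T];W^{-1,p'})}\Big]\le C_0.
\]

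Finally, given $\eta>0$ I would choose $R=C_0/\eta$. By the Markov inequality, $\mathbb{P}\big({\tt B}(v^\eps)\notin\mathcal{B}_R\big)\le C_0/R=\eta$ for every $\eps$, so that $\mathcal{L}({\tt B}(v^\eps))(\overline{\mathcal{B}_R})\ge 1-\eta$ with $\overline{\mathcal{B}_R}$ compact in $L^2(D_T)$; since $\eta>0$ is arbitrary, the family $\{\mathcal{L}({\tt B}(v^\eps))\}$ is tight on $L^2(D_T)$. I do not expect a genuine obstacle here: the argument is a routine compactness-plus-Markov estimate, and the only points requiring care are the bookkeeping of exponents---passing from the $L^p$-in-time control of Lemma \ref{lem:est2.1} to the $L^2$-in-time norm needed for Lemma \ref{lem:cpt}, which is legitimate because the time interval is finite---and the verification that $W_0^{1,p}\hookrightarrow L^2$ is compact on the bounded domain $D$.
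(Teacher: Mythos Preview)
Your proposal is correct and follows essentially the same route as the paper: use Lemma~\ref{lem:cpt} with $\mathbb{X}=W_0^{1,p}$, $\mathbb{Y}=L^2$, $\mathbb{X}^\ast=W^{-1,p'}$ and $q=2$ to identify compact sublevel sets in $L^2(D_T)$, then apply the uniform bounds from Lemmas~\ref{lem:est2.1} and~\ref{lem:cpt2} together with the Markov inequality. The only cosmetic difference is that you spell out the H\"older/Jensen step passing from the $L^p$-in-time bound on $\|{\tt B}(v^\eps)\|_{W_0^{1,p}}$ to the $L^2$-in-time norm, which the paper leaves implicit.
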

\begin{proof}
  By Lemma \ref{lem:cpt}, we get $ L^{2}([0,T];W_{0}^{1,p}) \, \cap \, W^{\alpha,2}([0,T];{W^{-1,p'}})$ is compactly embedded in $L^{2}(D_T)$. So for any $N > 0$, the set
  $$L_N = \left \{ v \in L^{2}(D_T) : \left \| v \right \|_{L^{2}([0,T];W_{0}^{1,p})} + \left \|v  \right \|_{W^{\alpha,2}([0,T];{W^{-1,p'}})} \leq N \right \} $$
  is a compact subset of $L^{2}(D_T)$. Observe that, thanks to Markov's inequality and Lemmas \ref{lem:est2.1} and  \ref{lem:cpt2}
  \begin{align}
      & \underset{N \rightarrow \infty}\lim \, \underset{\eps >0}\sup \ \mathbb{P}( {\tt B}(v^{\eps}) \notin L_N) \notag \\
      = & \underset{N \rightarrow \infty}\lim \, \underset{\eps >0}\sup \ \mathbb{P} \left (\left \|{\tt B}( v^{\eps}) \right \|_{L^{2}([0,T];W_{0}^{1,p})} + \left \|{\tt B}(v^{\eps} ) \right \|_{W^{\alpha,2}([0,T];{W^{-1,p'}})} > N \right) \notag \\
       = & \underset{N \rightarrow \infty}\lim \, \frac{1}{N} \ \underset{\eps>0}\sup \ \mathbb{E} \left[\left \| {\tt B}(v^{\eps}) \right \|_{L^{2}([0,T];W_{0}^{1,p})} + \left \| {\tt B}(v^{\eps} ) \right \|_{W^{\alpha,2}([0,T];{W^{-1,p'}})}\right]=0\,. \notag
  \end{align}
  Therefore, $\left \{ \mathcal{L}({\tt B}(v^{\eps})) \right \}$ is tight on $L^{2}(D_T)$.
\end{proof}

Since $S_M$ is a Polish space, we apply Prokhorov compactness theorem to the family of laws $\left \{ \mathcal{L}({\tt B}(v^{\eps})) \right \}$ and the modified version of  Skorokhod representation theorem \cite[Theorem $C.1$]{Hausenblus-2018} to have  existence of a new probability space $(\overline{\Omega},\overline{\mathcal{F}},\overline{\mathbb{P}})$, a subsequence of $\{\eps\}$, still denoted by $\left \{\eps  \right \}$, and random variables $(\overline{\tt B}^{\eps}, \overline{h}^{\eps},\overline{W}^{\eps})$ and $( \overline{\tt B}^*, h, \overline{W}^*)$  taking values in $L^{2}(D_T) \times S_M \times \mathcal{C}([0,T]; \mathbb{R})$ such that
\begin{itemize}
    \item[i)]  $\mathcal{L}(\overline{\tt B}^{\eps}, \overline{h}^{\eps},\overline{W}^{\eps}) = \mathcal{L}({\tt B}({v}^{\eps}),{h}^{\eps}, W)$ for all $\eps > 0$, 
    \item[ii)] $(\overline{\tt B}^{\eps}, \overline{h}^{\eps},\overline{W}^{\eps}) \rightarrow (\overline{\tt B}^*, h, \overline{W}^*)$ in $L^{2}(D_T) \times S_M \times \mathcal{C}([0,T]; \mathbb{R})$\, $\overline{\mathbb{P}}$-a.s. \,as $\eps \rightarrow 0$,
    \item[iii)] $\overline{W}^{\eps}(\overline{\omega}) = \overline{W}^*(\overline{\omega}) $ for all $\overline{\omega} \in \overline{\Omega}$.
\end{itemize}
Moreover, \begin{align}
  \overline{\tt B}^\eps={\tt B}(v^\eps)\circ\phi^\eps\,, \quad \overline{h}^\eps= h^\eps \circ \phi^\eps \,, \quad \mathbb{P}=\overline{\mathbb{P}}\circ (\phi^\eps)^{-1}\,. \label{eq:perfect-function}
 \end{align}
 for some sequence of perfect functions $\phi^\eps:\overline{\Omega}\to\Omega$, see e.g., \cite[Theorem $1.10.4$ \&\ Addendum $1.10.5$]{wellner}.
Furthermore, $\overline{W}^{\eps}$ and $\overline{W}^*$ are one dimensional Brownian motion over the stochastic basis 
$\big(\overline{\Omega},\overline{\mathcal{F}},\overline{\mathbb{P}}, \{\overline{\mathcal{F}}_t\}\big) $, where $ \{\overline{\mathcal{F}}_t\}$ is the natural filtration of 
$(\overline{\tt B}^{\eps}, \overline{h}^{\eps},\overline{W}^{\eps},\overline{\tt B}^*, h, \overline{W}^*)$.

Define, for $\eps>0$
\begin{align*}
\overline{v}^\eps= v^\eps \circ\phi^\eps\,.
\end{align*}
Since $v^\eps$ is the solution of  \eqref{eq:epsilon-main},  by using \eqref{eq:perfect-function} and  the equality of laws as stated in ${\rm i)}$, one can easily conclude that $\overline{v}^{\eps}$ is the solution to equation \eqref{eq:epsilon-main} on the stochastic basis $\big(\overline{\Omega},\overline{\mathcal{F}},\overline{\mathbb{P}}, \{\overline{\mathcal{F}}_t\}, \overline{W}^{\eps}\big) $ corresponding to $ \overline{h}^{\eps}$, and $\overline{\mathbb{P}}$-a.s.
$$ \overline{\tt B}^\eps(t)= {\tt B}(\overline{v}^\eps(t)) \quad \forall~t\in [0,T].$$

 Consequently, the following uniform estimates hold:
 \begin{equation}\label{esti:uniform-new-cond-c1}
 \begin{aligned}
 \sup_{\eps >0} \overline{\mathbb{E}} \Bigg[\sup_{s \in [0,T]}  \left \| {\tt B}(\overline{v}^{\eps}(s)) \right \|_{L^2}^{4} + \left(\int_{0}^{T}  \left \|  \overline{v}^{\eps}(s) \right \|_{W_{0}^{1,p}}^{p} ds \right)^2\Bigg] \leq C\,, \\
  \sup_{\eps >0}  \overline{\mathbb{E}}\left[ \left\| {\tt B}(\overline{v}^{\eps})\right\|_{W^{\alpha,2}([0,T];{W^{-1,p'}})}\right] \le C \quad \text{for}~~\alpha\in (0,\frac{1}{2p})\,, \\
   \sup_{\eps >0}  \overline{\mathbb{E}}\left[ \int_{D_T} \left| {\tt A}(\nabla \overline{v}^\eps)\right|^{p^\prime}\,dx\,dt\right]\le C\,.
   \end{aligned}
 \end{equation}
 \begin{lem}\label{lem:conv-limit-1-cond-c1-new}
 We have the following:
 \begin{itemize}
 \item[a)] ${\tt B}(\overline{v}^\eps) \goto \overline{\tt B}^*$ in $L^q(\overline{\Omega}; L^2(D_T))$ for all $1\le q<p$. In particular, ${\tt B}(\overline{v}^\eps) \goto \overline{\tt B}^*$ in $L^2(\overline{\Omega}; L^2(D_T))$.
 \item[b)] ${\tt B}(\overline{v}^\eps) \stackrel{*}{\rightharpoonup}  \overline{\tt B}^*$ in $L_w^2\big(\overline{\Omega}; L^\infty(0,T; L^2)\big)$.
 \item[c)] ${\tt B}(\overline{v}^\eps) \rightharpoonup   \overline{\tt B}^*$ in $L^p(\overline{\Omega}; L^p(0,T;W_0^{1,p}))$.
 \end{itemize}
 \end{lem}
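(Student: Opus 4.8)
All three convergences will be deduced from two facts already at hand: the $\overline{\mathbb{P}}$-a.s.\ convergence ${\tt B}(\overline{v}^{\eps})\to\overline{\tt B}^{*}$ in $L^{2}(D_T)$ (item ii) of the Skorokhod construction, transported to $\overline{v}^{\eps}$ through the identity $\overline{\tt B}^{\eps}={\tt B}(\overline{v}^{\eps})$), and the uniform moment bounds \eqref{esti:uniform-new-cond-c1}. Throughout, $\overline{\tt B}^{*}$ itself belongs to the relevant spaces by Fatou's lemma applied to these bounds.

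For a) the idea is to upgrade the a.s.\ convergence to $L^{q}(\overline{\Omega};L^{2}(D_T))$ by a Vitali-type uniform integrability argument. Using \eqref{esti:B-Lipschitz}, the Poincar\'{e} inequality and the continuous embedding $W_{0}^{1,p}\hookrightarrow L^{2}$ (valid since $p>2$ and $D$ is bounded), together with H\"{o}lder's inequality in time, one has $\overline{\mathbb{P}}$-a.s.
\[
 \|{\tt B}(\overline{v}^{\eps})\|_{L^{2}(D_T)}^{2p}\le C\Big(\int_{0}^{T}\|\overline{v}^{\eps}(s)\|_{W_{0}^{1,p}}^{p}\,ds\Big)^{2},
\]
so the first line of \eqref{esti:uniform-new-cond-c1} gives $\sup_{\eps}\overline{\mathbb{E}}\big[\|{\tt B}(\overline{v}^{\eps})\|_{L^{2}(D_T)}^{2p}\big]\le C$, whence $\overline{\tt B}^{*}\in L^{2p}(\overline{\Omega};L^{2}(D_T))$. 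Since any $q$ with $1\le q<p$ satisfies $q<2p$, the family $\{\|{\tt B}(\overline{v}^{\eps})-\overline{\tt B}^{*}\|_{L^{2}(D_T)}^{q}\}_{\eps}$ is uniformly integrable; as it converges to $0$ $\overline{\mathbb{P}}$-a.s., Vitali's convergence theorem yields ${\tt B}(\overline{v}^{\eps})\to\overline{\tt B}^{*}$ in $L^{q}(\overline{\Omega};L^{2}(D_T))$ for every such $q$, in particular for $q=2$.

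For b) the first line of \eqref{esti:uniform-new-cond-c1} in particular bounds $\sup_{\eps}\overline{\mathbb{E}}\big[\sup_{s\in[0,T]}\|{\tt B}(\overline{v}^{\eps}(s))\|_{L^{2}}^{2}\big]$, i.e.\ $\{{\tt B}(\overline{v}^{\eps})\}$ is bounded in $L^{2}_{w}(\overline{\Omega};L^{\infty}(0,T;L^{2}))$, which is the dual of the separable space $L^{2}(\overline{\Omega};L^{1}(0,T;L^{2}))$. I would extract, by Banach--Alaoglu, a weakly-$*$ convergent subsequence with some limit $\Xi$, and then identify $\Xi=\overline{\tt B}^{*}$: by a) the sequence converges to $\overline{\tt B}^{*}$ strongly, hence weakly, in $L^{2}(\overline{\Omega};L^{2}(D_T))\subset L^{2}(\overline{\Omega};L^{1}(0,T;L^{2}))$, so testing against elements of the predual pins the limit to $\overline{\tt B}^{*}$. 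A routine subsequence argument (every subsequence has a further weakly-$*$ convergent sub-subsequence, necessarily with the same limit) then gives the convergence of the whole family.

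For c) the bound \eqref{esti:operator-B-grad} turns the first line of \eqref{esti:uniform-new-cond-c1} into $\sup_{\eps}\overline{\mathbb{E}}\big[\int_{0}^{T}\|{\tt B}(\overline{v}^{\eps}(s))\|_{W_{0}^{1,p}}^{p}\,ds\big]\le C$, i.e.\ boundedness in the reflexive space $L^{p}(\overline{\Omega};L^{p}(0,T;W_{0}^{1,p}))$; a subsequence therefore converges weakly there to some $\Theta$, and the continuous embedding $L^{p}(\overline{\Omega};L^{p}(0,T;W_{0}^{1,p}))\hookrightarrow L^{2}(\overline{\Omega};L^{2}(D_T))$ together with a) forces $\Theta=\overline{\tt B}^{*}$; the subsequence argument closes the proof. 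The only point needing a little care is fixing the correct duality pairing (and the separability of the predual) that underlies the sequential weak-$*$ compactness in b); parts a) and c) are soft once the a.s.\ convergence and the moment bounds \eqref{esti:uniform-new-cond-c1} are in place.
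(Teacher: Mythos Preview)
Your proposal is correct and follows essentially the same line as the paper: part a) is obtained exactly as the paper does, via the $\overline{\mathbb{P}}$-a.s.\ convergence from the Skorokhod construction together with a higher-moment bound from \eqref{esti:uniform-new-cond-c1} and Vitali's theorem (you even squeeze out an $L^{2p}$ bound where the paper only asserts $L^p$, which is harmless). For b) and c) the paper simply points to the arguments in \cite[Lemma~3.10(iii), Lemma~3.11(v)]{Majee2023} and \cite[Lemma~4.5.7(v)]{Wittbold2019}; your sketch---boundedness in the dual/reflexive space, sequential weak($*$) compactness, and identification of the limit through a)---is precisely what those references do, so nothing is different in substance.
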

 \begin{proof} Proof of ${\rm a):}$
 In view of \eqref{esti:uniform-new-cond-c1}, the sequence $\{ {\tt B}(\overline{v}^\eps) \}$ is uniformly bounded in $L^p(\overline{\Omega};L^2(D_T))$ and hence
 equi-integrable in $L^q(\overline{\Omega}; L^2(D_T))$ for all $1\le q<p$. Since $\overline{\mathbb{P}}$-a.s., ${\tt B}(\overline{v}^\eps) \goto  \overline{\tt B}^*$, by Vitali convergence theorem we arrive at the assertion ${\rm a)}$.
\vspace{0.15cm}

\noindent{Proof of ${\rm b)}~\&~ {\rm c)}$:}  Following the same proof as done in the proof of \cite[${\rm (iii)}$ of Lemma $3.10$]{Majee2023};  see also \cite[${\rm (v)}$ of Lemma $4.5.7$ ]{Wittbold2019}, one can easily get ${\rm b)}$.  Proof of ${\rm c)}$ follows from   \cite[${\rm (v)}$ of Lemma $3.11$]{Majee2023}. This completes the proof. 
 \end{proof}
 Define the predictable process
 $$ \overline{v}^*(t,x):= {\tt B}^{-1} \overline{\tt B}^*(t,x).$$
 Then for all $t\in [0,T]$ and $\overline{\mathbb{P}}$-a.s., $\overline{v}^*(t)\in L^2$ is well-defined and 
 \begin{align}
 \overline{\mathbb{E}}\left[\sup_{0\le t\le T}\| \overline{v}^*(t)\|_{L^2}^2\right]\le C\,. \label{esti:uni-l2-limit-fun-cond-c1}
 \end{align}
 We show that $\overline{v}^*$ will serve as a weak solution of the Skeleton equation.  To proceed further, we first show certain convergence results of $\overline{v}^\eps$. 
 \begin{lem}\label{lem:conv-limit-2-cond-c1-new}
 The following convergence holds:
 \begin{itemize}
 \item[{\rm i)}] $\overline{v}^\eps \goto \overline{v}^*$ in $L^2(\overline{\Omega}; L^2(D_T))$.
 \item[{\rm ii)}] $ \nabla \overline{v}^\eps  \rightharpoonup \nabla  \overline{v}^*$ in $L^{p}(\overline{\Omega}\times D_T)^d$.
 \item[{\rm iii)}] There exists $\overline{G}^* \in L^{p^\prime}(\overline{\Omega}\times D_T)^d$ such that for any $\phi\in W_0^{1,p}$
 \begin{align*}
 \lim_{\eps\goto 0} \overline{\mathbb{E}}\left[ \int_0^T\left| \int_0^t \langle {\tt A}(\nabla \overline{v}^\eps(s))-\overline{G}^*(s), \nabla \phi \rangle \,ds\right|\,dt\right]=0\,.
 \end{align*}
 \item[{\rm iv)}] For any $\phi\in W_0^{1,p}$
 \begin{align*}
\underset{\eps \rightarrow 0}\lim \,\overline{\mathbb{E}}\left[ \int_{0}^{T} \Big| \int_0^t \left<\sigma(\overline{v}^{\eps}(s))\overline{h}^{\eps}(s) - \sigma(\overline{v}^*(s))h(s) , \phi \right> \, ds\Big|\,dt\right] = 0\,. 
\end{align*}
 \end{itemize}
 \end{lem}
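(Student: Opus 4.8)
The plan is to establish the four items in the order (i), (ii), (iv), (iii); (iii) is the only genuinely delicate one and, as I explain below, it cannot be obtained from weak compactness alone — one has to feed in the equation \eqref{eq:epsilon-main} solved by $\overline{v}^\eps$. For (i): since $\overline{v}^*={\tt B}^{-1}\overline{\tt B}^*$, apply the pointwise bound \eqref{B:reverse-poinwise} with $u=\overline{v}^\eps(t)$, $w=\overline{v}^*(t)$ (so that ${\tt B}(w)=\overline{\tt B}^*(t)$) to get $|\overline{v}^\eps(t,x)-\overline{v}^*(t,x)|\le C_3^{-1}|{\tt B}(\overline{v}^\eps)(t,x)-\overline{\tt B}^*(t,x)|$ a.e.\ on $D_T$, $\overline{\mathbb{P}}$-a.s.; squaring, integrating over $D_T$ and taking $\overline{\mathbb{E}}$ reduces (i) to Lemma \ref{lem:conv-limit-1-cond-c1-new}(a). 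For (ii): by \eqref{esti:uniform-new-cond-c1} and Jensen's inequality $\{\nabla\overline{v}^\eps\}$ is bounded in the reflexive space $L^p(\overline{\Omega}\times D_T)^d$, so a subsequence converges weakly to some $\chi$; testing against $\psi(\overline{\omega})\Phi(t,x)$ with $\psi\in L^\infty(\overline{\Omega})$, $\Phi\in C_c^\infty(D_T)^d$, and using the strong convergence $\overline{v}^\eps\goto\overline{v}^*$ in $L^2(\overline{\Omega}\times D_T)$ from (i), identifies $\chi=\nabla\overline{v}^*$, whence the whole sequence converges (and $\overline{v}^*\in L^p(\overline{\Omega};L^p(0,T;W_0^{1,p}))$, in agreement with Lemma \ref{lem:conv-limit-1-cond-c1-new}(c)).

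For (iv), write $\sigma(\overline{v}^\eps)\overline{h}^\eps-\sigma(\overline{v}^*)h=(\sigma(\overline{v}^\eps)-\sigma(\overline{v}^*))\overline{h}^\eps+\sigma(\overline{v}^*)(\overline{h}^\eps-h)$. Using \ref{A4}, the Cauchy--Schwarz inequality in time and $\overline{h}^\eps\in S_M$, the first contribution is bounded by $C_T\|\phi\|_{L^2}\sqrt{M}\,\big(\overline{\mathbb{E}}\big[\int_0^T\|\overline{v}^\eps-\overline{v}^*\|_{L^2}^2\,ds\big]\big)^{1/2}$, which tends to $0$ by (i). For the second contribution, $\overline{h}^\eps\goto h$ in $S_M$ $\overline{\mathbb{P}}$-a.s.\ and $s\mapsto\langle\sigma(\overline{v}^*(s)),\phi\rangle\chi_{[0,t]}(s)\in L^2(0,T)$ by \ref{A4} and \eqref{esti:uni-l2-limit-fun-cond-c1}, hence $\int_0^t\langle\sigma(\overline{v}^*)(\overline{h}^\eps-h),\phi\rangle\,ds\goto0$ $\overline{\mathbb{P}}$-a.s.\ for each $t$; this inner integral is dominated uniformly in $t$ and $\eps$ by a fixed $L^2(\overline{\Omega})$ random variable (again by \eqref{esti:uni-l2-limit-fun-cond-c1} and $\overline{h}^\eps,h\in S_M$), so dominated convergence in $t$ followed by Vitali's theorem yields the required $L^1(\overline{\Omega})$ limit.

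For (iii): by \eqref{esti:uniform-new-cond-c1}, $\{{\tt A}(\nabla\overline{v}^\eps)\}$ is bounded in the reflexive space $L^{p'}(\overline{\Omega}\times D_T)^d$, so after passing to a single subsequence ${\tt A}(\nabla\overline{v}^\eps)\rightharpoonup\overline{G}^*$. Testing \eqref{eq:epsilon-main}, solved by $\overline{v}^\eps$ on the new basis (the identity holding in $W^{-1,p'}$ for all $t$, $\overline{\mathbb{P}}$-a.s.), against $\phi\in W_0^{1,p}$ yields
\begin{align*}
\int_0^t\langle{\tt A}(\nabla\overline{v}^\eps(s)),\nabla\phi\rangle\,ds &=\langle{\tt B}(u_0),\phi\rangle-\langle{\tt B}(\overline{v}^\eps(t)),\phi\rangle \\
&\quad+\int_0^t\langle\sigma(\overline{v}^\eps(s))\overline{h}^\eps(s),\phi\rangle\,ds+\sqrt{\eps}\int_0^t\langle\sigma(\overline{v}^\eps(s)),\phi\rangle\,d\overline{W}^\eps(s).
\end{align*}
On the right, $\langle{\tt B}(\overline{v}^\eps(\cdot)),\phi\rangle\goto\langle\overline{\tt B}^*(\cdot),\phi\rangle$ in $L^2(\overline{\Omega}\times(0,T))$ by Lemma \ref{lem:conv-limit-1-cond-c1-new}(a), the drift term converges in $L^1(\overline{\Omega}\times(0,T))$ by (iv), and the stochastic term has $L^1(\overline{\Omega};\mathcal{C}([0,T]))$-norm of order $\sqrt{\eps}$ by the Burkholder--Davis--Gundy inequality, \ref{A4}, \eqref{B:reverse-L2} and Lemma \ref{lem:est2.1}. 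Hence $t\mapsto\int_0^t\langle{\tt A}(\nabla\overline{v}^\eps(s)),\nabla\phi\rangle\,ds$ converges strongly in $L^1(\overline{\Omega}\times(0,T))$. On the other hand, testing ${\tt A}(\nabla\overline{v}^\eps)\rightharpoonup\overline{G}^*$ against $\chi_A(\overline{\omega})\chi_{[0,t]}(s)\nabla\phi(x)$ ($A\in\overline{\mathcal{F}}$) shows that for each fixed $t$ the same sequence converges weakly in $L^1(\overline{\Omega})$ to $\int_0^t\langle\overline{G}^*(s),\nabla\phi\rangle\,ds$. Matching the strong and weak limits (for a.e.\ $t$ along a further subsequence, then uniqueness of weak $L^1$ limits) identifies the limit and gives exactly the convergence asserted in (iii).

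The main obstacle is (iii): weak convergence of $\{{\tt A}(\nabla\overline{v}^\eps)\}$ in $L^{p'}(\overline{\Omega}\times D_T)^d$ by itself does \emph{not} give the strong $L^1(\overline{\Omega}\times(0,T))$ convergence of the time integrals, because the linear map $G\mapsto\int_0^\cdot\langle G(s),\nabla\phi\rangle\,ds$ is compact only in the time variable and not over $\overline{\Omega}$. One is thus forced to borrow the strong convergence from the remaining terms of the $\overline{v}^\eps$-equation and then reconcile it with the weakly identified limit $\overline{G}^*$; everything else is a routine use of the a priori bounds \eqref{esti:uniform-new-cond-c1}, \eqref{esti:uni-l2-limit-fun-cond-c1}, Lemma \ref{lem:est2.1} and the Lipschitz structure of ${\tt B}^{-1}$ and $\sigma$.
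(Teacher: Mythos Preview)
Your proof is correct. Parts {\rm (i)}, {\rm (ii)} and {\rm (iv)} follow the paper's approach essentially verbatim: the paper also deduces {\rm (i)} from Lemma~\ref{lem:conv-limit-1-cond-c1-new}{\rm (a)} and the Lipschitz property of ${\tt B}^{-1}$, obtains {\rm (ii)} from the uniform $L^p(\overline{\Omega};L^p(0,T;W_0^{1,p}))$ bound, and treats {\rm (iv)} by the same splitting $\sigma(\overline{v}^\eps)\overline{h}^\eps-\sigma(\overline{v}^*)h=(\sigma(\overline{v}^\eps)-\sigma(\overline{v}^*))\overline{h}^\eps+\sigma(\overline{v}^*)(\overline{h}^\eps-h)$ followed by Vitali's theorem.

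For {\rm (iii)} there is a genuine difference. The paper simply asserts that {\rm (iii)} ``follows from the uniform boundedness of $\{{\tt A}(\nabla\overline{v}^\eps)\}$ in $L^{p'}(\overline{\Omega}\times D_T)^d$'', i.e.\ from weak compactness alone. You correctly observe that weak convergence ${\tt A}(\nabla\overline{v}^\eps)\rightharpoonup\overline{G}^*$ in $L^{p'}(\overline{\Omega}\times D_T)^d$ does \emph{not} by itself yield the strong $L^1(\overline{\Omega}\times(0,T))$ convergence of $t\mapsto\int_0^t\langle{\tt A}(\nabla\overline{v}^\eps(s)),\nabla\phi\rangle\,ds$, since the integration map is compact in $t$ but not in $\overline{\omega}$. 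Your remedy---testing the equation \eqref{eq:epsilon-main} for $\overline{v}^\eps$ against $\phi$, upgrading to strong convergence via the already-established convergences of the ${\tt B}$, drift and stochastic terms, and then identifying the limit with $\int_0^\cdot\langle\overline{G}^*,\nabla\phi\rangle\,ds$ through the weak $L^{p'}$ limit---is a complete argument for the statement as written. In short, the paper's justification of {\rm (iii)} is at best elliptic (it would be adequate for the weaker, $\xi$-tested version of Lemma~\ref{eq:cgseq1}{\rm (ii)}, which is in fact all that is used downstream); your argument fills the gap and delivers the stronger statement actually claimed.
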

 \begin{proof} Proof of ${\rm i)}:$
 One can use ${\rm a)}$ of Lemma \ref{lem:conv-limit-1-cond-c1-new} and Lipschitz continuity property of ${\tt B}^{-1}: L^2(\overline{\Omega}; L^2(D_T))\goto L^2(\overline{\Omega}; L^2(D_T)) $ together with the definition of $\overline{v}^*$ to arrive at ${\rm i)}$.
 \vspace{0.2cm}
 
 \noindent{Proof of  ${\rm ii)}$ and ${\rm iii)}$:} The assertions ${\rm ii)}$ and ${\rm iii)}$ follow from the 
 uniform boundedness of $\{ \overline{v}^\eps\}$ in $L^p(\overline{\Omega}; L^p(0,T;W_0^{1,p}))$ and uniform boundedness of the family $\{ {\tt A}(\nabla \overline{v}^\eps)\}$ in $L^{p^\prime}(\overline{\Omega}\times D_T)^d$~(cf.~\eqref{esti:uniform-new-cond-c1}).
 \vspace{0.2cm}
 
 \noindent{Proof of  ${\rm iv)}$:}  To prove it, we proceed as follows. 
Like in the estimation of $\mathcal{I}_1(T)$ as in {\bf Step-II} of Subsection \ref{subsec:proof-cond-c2}, we see that
\begin{align*}
& \overline{\mathbb{E}}\left[ \int_0^T\Big|  \int_{0}^{t} \left< \big(\sigma(\overline{v}^\eps) - \sigma(\overline{v}^*(s)) \big) \overline{h}^\eps(s) , \phi \right> \, ds\Big|\,dt\right] \notag \\
& \le CT \left\{ \overline{\mathbb{E}}\Big[ \int_0^T\|\overline{v}^\eps-\overline{v}^*\|_{L^2}^2\,dt\Big]\right\}^\frac{1}{2}\left\{ \overline{\mathbb{E}}\Big[ \int_0^T|\overline{h}^\eps(t)|^2\, dt\Big]\right\}^\frac{1}{2} \goto 0 \quad (\text{by  ${\rm i)}$})\,.
\end{align*}
Since $\overline{\mathbb{P}}$-a.s, $ \overline{h}^\eps   \rightharpoonup  h$ in $L^2([0,T],\mathbb{R})$, one can easily check that $\overline{\mathbb{P}}$-a.s,

\begin{align*}
  \int_0^T \left(\overline{h}^\eps(s)-h(s)\right)\overline{\psi}(s)\,ds  \rightarrow 0\,, 
\end{align*}
where $\overline{\psi} \in L^2([0,T];\R)$ is defined by 
$$\overline{\psi}(t):= \int_D \sigma(\overline{v}^*(t,x))\phi(x)\,dx.$$ 
By using \eqref{esti:uni-l2-limit-fun-cond-c1} and
 the fact that $\overline{h}^\eps, h \in \mathcal{A}_M$, we conclude that, thanks to Vitali convergence theorem
 \begin{align*}
 \lim_{\eps \goto 0} \overline{\mathbb{E}}\left[ \int_0^T\Big|  \int_{0}^{t} \left< \big(\overline{h}^\eps(s) - h(s) \big) \sigma(\overline{v}^*(s)) , \phi \right> \, ds\Big|\,dt\right] =0\,,
 \end{align*}
 and hence the assertion ${\rm iv)}$ follows. 
 \end{proof}

 \begin{thm}
  $\overline{v}^*$ is indeed a solution to the Skeleton equation \eqref{eq:skeleton}. 
 \end{thm}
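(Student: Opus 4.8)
The plan is to mimic, on the new probability space $(\overline{\Omega},\overline{\mathcal{F}},\overline{\mathbb{P}})$, the monotonicity arguments already used for the Skeleton equation in Subsection~\ref{subsec:existence-skeleton} and for condition~\ref{C2} in Subsection~\ref{subsec:proof-cond-c2}; the only genuinely new point is that the noise term carries the small factor $\sqrt{\eps}$ and therefore disappears in the limit. First I would pass to the limit in the weak formulation of \eqref{eq:epsilon-main} satisfied by $\overline{v}^\eps$: for $\phi\in W_0^{1,p}$ and $\xi\in\mathcal{D}(0,T)$,
\begin{align*}
-\int_0^T\langle {\tt B}(\overline{v}^\eps),\phi\rangle\,\xi^\prime(t)\,dt
&= \int_0^T\langle \sigma(\overline{v}^\eps)\overline{h}^\eps(t),\phi\rangle\,\xi(t)\,dt
 -\int_0^T\langle {\tt A}(\nabla\overline{v}^\eps),\nabla\phi\rangle\,\xi(t)\,dt \\
&\quad +\sqrt{\eps}\int_0^T\Big\langle \int_0^t\sigma(\overline{v}^\eps(s))\,d\overline{W}^\eps(s),\phi\Big\rangle\,\xi^\prime(t)\,dt\,,
\end{align*}
where the last term tends to $0$ in $L^2(\overline{\Omega})$ as $\eps\goto0$ by It\^o's isometry together with the uniform bound of Lemma~\ref{lem:est2.1}. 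Using ${\rm a)}$ and ${\rm c)}$ of Lemma~\ref{lem:conv-limit-1-cond-c1-new}, together with ${\rm iii)}$--${\rm iv)}$ of Lemma~\ref{lem:conv-limit-2-cond-c1-new}, and arguing exactly as in the proof of ${\rm iii)}$ of Lemma~\ref{eq:cgseq1}, I obtain that $\overline{\tt B}^*$ is weakly continuous with values in $L^2$, that $\overline{\tt B}^*(0)={\tt B}(u_0)$, and that $\overline{\mathbb{P}}$-a.s.\ and for all $t\in[0,T]$,
\begin{align*}
\overline{\tt B}^*(t)={\tt B}(u_0)+\int_0^t{\rm div}_x\overline{G}^*(s)\,ds+\int_0^t\sigma(\overline{v}^*(s))h(s)\,ds \quad\text{in }L^2\,.
\end{align*}

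Next I would identify $\overline{\tt B}^*$ with ${\tt B}(\overline{v}^*)$: since ${\tt B}$ is Lipschitz and $\overline{v}^\eps\goto\overline{v}^*$ in $L^2(\overline{\Omega};L^2(D_T))$ (Lemma~\ref{lem:conv-limit-2-cond-c1-new}\,${\rm i)}$), we have ${\tt B}(\overline{v}^\eps)\goto{\tt B}(\overline{v}^*)$ in $L^2(\overline{\Omega};L^2(D_T))$, while ${\tt B}(\overline{v}^\eps)\goto\overline{\tt B}^*$ in the same space; uniqueness of the limit together with weak $L^2$-continuity then gives $\overline{\tt B}^*(t)={\tt B}(\overline{v}^*(t))$ for all $t$, $\overline{\mathbb{P}}$-a.s. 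It then only remains to prove the operator identification $\overline{G}^*={\tt A}(\nabla\overline{v}^*)$ in $L^{p^\prime}(\overline{\Omega}\times D_T)^d$, which is the main obstacle. Applying It\^o's formula to $x\mapsto\|x\|_{L^2}^2$ for ${\tt B}(\overline{v}^\eps)$ (legitimate thanks to the higher regularity of Lemma~\ref{lem:est2.2}), using the boundedness \ref{A3} of ${\tt b}^\prime$ as in \eqref{esti:2}, taking expectation (the martingale part has zero mean) and observing that $\eps\,\overline{\mathbb{E}}\big[\int_0^T\|\sigma(\overline{v}^\eps(s))\|_{L^2}^2\,ds\big]\to0$ by Lemma~\ref{lem:est2.1}, I get
\begin{align*}
\overline{\mathbb{E}}\Big[\tfrac12\|{\tt B}(\overline{v}^\eps(T))\|_{L^2}^2
+\int_{D_T}{\tt b}^\prime(\overline{v}^\eps){\tt A}(\nabla\overline{v}^\eps)\cdot\nabla\overline{v}^\eps\,dx\,dt\Big]
= \tfrac12\|{\tt B}(u_0)\|_{L^2}^2+\overline{\mathbb{E}}\Big[\int_{D_T}\sigma(\overline{v}^\eps)\overline{h}^\eps{\tt B}(\overline{v}^\eps)\,dx\,dt\Big]+o(1)\,.
\end{align*}
Writing the corresponding energy identity for ${\tt B}(\overline{v}^*)$ from the limit equation, subtracting, using the weak lower semicontinuity $\liminf_\eps\overline{\mathbb{E}}\big[\|{\tt B}(\overline{v}^\eps(T))\|_{L^2}^2\big]\ge\overline{\mathbb{E}}\big[\|{\tt B}(\overline{v}^*(T))\|_{L^2}^2\big]$, and the convergence of the controlled drift term $\overline{\mathbb{E}}\big[\int_{D_T}\sigma(\overline{v}^\eps)\overline{h}^\eps{\tt B}(\overline{v}^\eps)\big]\to\overline{\mathbb{E}}\big[\int_{D_T}\sigma(\overline{v}^*)h\,{\tt B}(\overline{v}^*)\big]$ (which follows from the strong $L^2(\overline{\Omega}\times D_T)$ convergence of $\overline{v}^\eps$, the uniform fourth moment bound in \eqref{esti:uniform-new-cond-c1}, the weak convergence $\overline{h}^\eps\rightharpoonup h$ and Vitali's theorem), I arrive at
\begin{align*}
\limsup_{\eps\goto0}\,\overline{\mathbb{E}}\Big[\int_{D_T}{\tt b}^\prime(\overline{v}^\eps){\tt A}(\nabla\overline{v}^\eps)\cdot\nabla\overline{v}^\eps\,dx\,dt\Big]
\le\overline{\mathbb{E}}\Big[\int_{D_T}\overline{G}^*\cdot\nabla{\tt B}(\overline{v}^*)\,dx\,dt\Big]\,.
\end{align*}

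From here I would reproduce the five-term decomposition of $\overline{\mathbb{E}}\big[\int_{D_T}{\tt b}^\prime(\overline{v}^\eps){\tt A}(\nabla\overline{v}^\eps)\cdot\nabla\overline{v}^\eps\big]-\overline{\mathbb{E}}\big[\int_{D_T}\overline{G}^*\cdot\nabla{\tt B}(\overline{v}^*)\big]$ as in Subsection~\ref{subsec:existence-skeleton}, using $\overline{v}^\eps\to\overline{v}^*$ in $L^2(\overline{\Omega}\times D_T)$ (hence ${\tt b}^\prime(\overline{v}^\eps)\to{\tt b}^\prime(\overline{v}^*)$ a.e.\ with the relevant $L^{p^\prime}$- and $L^p$-dominations), the weak convergences ${\tt A}(\nabla\overline{v}^\eps)\rightharpoonup\overline{G}^*$, $\nabla\overline{v}^\eps\rightharpoonup\nabla\overline{v}^*$ and $\nabla{\tt B}(\overline{v}^\eps)\rightharpoonup\nabla{\tt B}(\overline{v}^*)$ over $\overline{\Omega}\times D_T$, and the Lebesgue/Vitali convergence theorems; this yields
\begin{align*}
\limsup_{\eps\goto0}\,\overline{\mathbb{E}}\Big[\int_{D_T}{\tt b}^\prime(\overline{v}^\eps)\big({\tt A}(\nabla\overline{v}^\eps)-{\tt A}(\nabla\overline{v}^*)\big)\cdot(\nabla\overline{v}^\eps-\nabla\overline{v}^*)\,dx\,dt\Big]\le0\,.
\end{align*}
Combining this with the monotonicity \eqref{A:monotonocity} and the lower bound ${\tt b}^\prime\ge C_3$, I conclude $\overline{\mathbb{E}}\big[\int_{D_T}\big({\tt A}(\nabla\overline{v}^\eps)-{\tt A}(\nabla\overline{v}^*)\big)\cdot(\nabla\overline{v}^\eps-\nabla\overline{v}^*)\big]\to0$, hence $\overline{\mathbb{E}}\big[\int_{D_T}{\tt A}(\nabla\overline{v}^\eps)\cdot\nabla\overline{v}^\eps\big]\to\overline{\mathbb{E}}\big[\int_{D_T}\overline{G}^*\cdot\nabla\overline{v}^*\big]$, and the $(M)$-property of the monotone, bounded, hemicontinuous operator ${\tt A}$ on $L^p(\overline{\Omega}\times D_T)^d$ gives $\overline{G}^*={\tt A}(\nabla\overline{v}^*)$. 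Substituting back into the limit equation shows that $\overline{v}^*$ solves \eqref{eq:skeleton} on $\big(\overline{\Omega},\overline{\mathcal{F}},\overline{\mathbb{P}}\big)$, i.e.\ it is a weak solution of the Skeleton equation. The main difficulty throughout is precisely this Minty step: it is what dictated the need for the higher moment bound of Lemma~\ref{lem:est2.2}, both to justify It\^o's formula and to supply the uniform integrability required for passing to the limit in the energy identities and in the controlled drift term.
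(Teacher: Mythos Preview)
Your proposal is correct and follows essentially the same approach as the paper: pass to the limit in the weak formulation (using that the $\sqrt{\eps}$-stochastic term vanishes), identify $\overline{\tt B}^*={\tt B}(\overline{v}^*)$, then run the Minty-type energy argument to identify $\overline{G}^*={\tt A}(\nabla\overline{v}^*)$, exploiting the fourth-moment bound of Lemma~\ref{lem:est2.2} for the Vitali-convergence steps in the controlled drift term. The only cosmetic difference is that you spell out the five-term decomposition explicitly, whereas the paper refers to \cite[Lemma~4.5.22]{Wittbold2019} and \cite[Subsection~3.5]{Majee2023} for that final step.
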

 \begin{proof}
By using BDG inequality and  \eqref{esti:uniform-new-cond-c1}, one can easily see that
\begin{align}
& \sqrt{\eps}\, \overline{\mathbb{E}} \left[ \int_0^T\Big| \big\langle \int_0^t \sigma(\overline{v}^\eps(s))\,d\overline{W}^\eps(s), \phi \big\rangle\Big|\,dt\right] \le \sqrt{\eps}  \overline{\mathbb{E}} \left[ \int_0^T\left\| \int_0^t \sigma(\overline{v}^\eps(s))\,d\overline{W}^\eps(s)\right\|_{L^2}\|\phi\|_{L^2}\,dt\right] \notag \\
& \le \sqrt{ \eps}\|\phi\|_{L^2}  \overline{\mathbb{E}} \left[ \int_0^T\left( \int_0^T \|\sigma(\overline{v}^\eps(s))\|_{L^2}^2\,ds\right)^\frac{1}{2}\,dt\right] 
 \le C \sqrt{\eps} \|\phi\|_{L^2}  T\, \overline{\mathbb{E}} \left[\sup_{0\le s\le T}\|\overline{v}^\eps(s)\|_{L^2}^2\right] \goto 0 \,. 
 \label{conv:c1-final-3}
\end{align}
Making use of Lemmas \ref{lem:conv-limit-1-cond-c1-new}-\ref{lem:conv-limit-2-cond-c1-new} together with \eqref{conv:c1-final-3}, and following the similar arguments as invoked in ${\rm iii)}$ of Lemma \ref{eq:cgseq1} but its stochastic setting, we get
\begin{itemize}
\item $\overline{\tt B}^*(0)= {\tt B}(u_0)$ in $L^2$.
\item  For all $t\in [0,T]$, ${\tt B}(\overline{v}^\eps(t)) \rightharpoonup  \overline{\tt B}^*(t)$ in $L^2(\overline{\Omega}\times D)$.
\item $\overline{\mathbb{P}}$-a.s., and for all $t\in [0,T]$,  $ \overline{\tt B}^*(t)= {\tt B}(\overline{v}^*)$.
\item $\overline{v}^*$ satisfies the equation: a.s. in $\overline{\Omega}$ and for all $t\in[0,T]$
\begin{align}
{\tt B}(\overline{v}^*(t))= {\tt B}(u_0) +  \int_{0}^{t}  {\rm div}_x  \overline{G}^*(s) ds + \int_{0}^{t} \sigma(\overline{v}^*(s))h(s) ds \quad \text{in $L^2$ }\,. \label{eq:weak-limit-function-cond-c1}
\end{align}
\end{itemize}

\noindent \underline{\bf  Identification of  $\overline{G}^*$:} To show $\overline{v}^*$ as a solution of \eqref{eq:skeleton}, it remains to identify  $\overline{G}^*$ as 
\begin{align}
\overline{G}^* = {\tt A}(\nabla \overline{v}^*) \quad \text{in}\quad  L^{p'}(\overline{\Omega}\times D_T)^{d}\,. \label{equality-G}
\end{align}
 By applying chain-rule on ${\tt B}(\overline{v}^*)$ in  \eqref{eq:weak-limit-function-cond-c1}, and Ito formula on $\|{\tt B}(\overline{v}^\eps)\|_{L^2}^2$, and then subtracting these resulting equations from each other, we have, after taking expectation
\begin{align}
     \overline{\mathbb{E}} \Big[& \left \|  {\tt B}(\overline{v}^{\eps}(T)) \right \|_{L^2}^{2} - \left \| {\tt B}(\overline{v}^*(T)) \right \|_{L^2}^{2} \Big] + 2\overline{\mathbb{E}} \Bigg[\int_{0}^{T} \int_{D}  {\tt b}^\prime(\overline{v}^\eps)(t)) {\tt A}(\nabla  \overline{v}^{\eps}(t)) \cdot  \nabla  \overline{v}^{\eps}(t) \, dx \, dt \Bigg]\notag  \\
    & \leq 2\overline{\mathbb{E}} \Bigg[\int_{0}^{T} \int_{D}   \overline{G}^* \cdot \nabla {\tt B}(\overline{v}^*(t)) \, dx \, dt \Bigg]+ \eps \ \overline{\mathbb{E}} \Bigg[\int_{0}^{T} \left \|  \sigma( \overline{v}^{\eps}) \right \|_{L^2}^{2} dt \Bigg]  \notag \\
    & + 2\overline{\mathbb{E}} \Bigg[\int_{0}^{T} \left< \sigma( v^{\eps})h^{\eps}(t),  {\tt B}(v^{\eps}(t))\right>\, dt - \int_{0}^{T} \left<\sigma(\overline{v}^*(t))h(t), {\tt B}(\overline{v}^*(t)) \right> \, dt \Bigg]\,. \label{eq:conv-limit-c1}
\end{align}
Using the uniform estimate $\overline{\mathbb{E}}\left[ \sup_{0\le t\le T} \| \overline{v}^*(t)\|_{L^2}^4\right]< + \infty$, weak convergence of $\overline{h}^\eps$ to $h$ and Vitaly convergence theorem, we have, similar to \eqref{eq:lem4.4.1}
\begin{align}
    \lim_{\eps \rightarrow 0} \overline{\mathbb{E}} \left [ \int_{0}^{T} \left< \sigma(\overline{v}^*(t)) \big(\overline{h}^{\eps}(t) - h(t) \big) , {\tt B}(\overline{v}^*(t)) \right> \, dt  \right ] = 0. \label{conv:weak-c1-drift-1}
\end{align}
Similar to \eqref{eq:lem4.4.2} and \eqref{eq:lem4.4.3}, one can use ${\rm i)}$, Lemma \ref{lem:conv-limit-1-cond-c1-new} together with uniform estimate for $\overline{v}^*$ and \eqref{esti:uni-l2-limit-fun-cond-c1} to arrive at
\begin{equation} \label{conv:weak-c1-drift-2}
\begin{aligned}
 \lim_{\eps \goto 0} \overline{\mathbb{E}} \left[ \int_0^T \langle \sigma(\overline{v}^\eps) \overline{h}^\eps(t),  {\tt B}(\overline{v}^\eps(t))-{\tt B}(\overline{v}^*(t))\rangle\,dt\right]=0\,, \\
\lim_{\eps \goto 0} \overline{\mathbb{E}} \left[ \int_0^T \langle  (\sigma(\overline{v}^\eps)-\sigma(\overline{v})) \overline{h}^\eps(t), {\tt B}(\overline{v}^*(t))\rangle\,dt\right]=0\,.
\end{aligned}
\end{equation}
Using \eqref{conv:weak-c1-drift-1} and \eqref{conv:weak-c1-drift-2} and passing to the limit as $\eps \goto 0$ in \eqref{eq:conv-limit-c1}, we have
\begin{align}
\limsup_{\eps>0} \overline{\mathbb{E}} \Bigg[\int_{0}^{T} \int_{D}   {\tt b}^\prime(\overline{v}^\eps)(t)) {\tt A}(\nabla  \overline{v}^{\eps}(t)) \cdot  \nabla  \overline{v}^{\eps}(t) \, dx \, dt
 \Bigg] \le \overline{\mathbb{E}} \Bigg[\int_{0}^{T} \int_{D}   \overline{G}^* \cdot \nabla {\tt B}(\overline{v}^*(t)) \, dx \, dt \Bigg]\,. \label{conv:weak-c1-drift-3}
\end{align}
One may follow the calculations as done in \cite[Pages 321-323, proof of Lemma $4.5.22$]{Wittbold2019} and \cite[{\bf Step-iii)}, Subsection $3.5$]{Majee2023} together with \eqref{conv:weak-c1-drift-3} to get  $\overline{G}^*={\tt A}(\nabla \overline{v}^*) \quad \text{in}\quad  L^{p'}(\overline{\Omega}\times D_T)^{d}$. We combine \eqref{eq:weak-limit-function-cond-c1} and \eqref{equality-G} to conclude that $\overline{v}^*$ is indeed a solution of \eqref{eq:skeleton}.
\end{proof}

\noindent\underline{\bf Step C: convergence of $v^\eps$ to $u_h$ in distribution:} In this final step, we show convergence of $v^\eps$ to $u_h$ in  the sense of distribution on $\mathcal{Z}$. Since $u_h$ is a weak solution of the skeleton equation, by uniqueness result,
we see that $\overline{v}^* = u_h$. Since $ \mathcal{L}\big(\overline{v}^{\eps}\big)= \mathcal{L}\big( v^\eps\big)$, to prove  $v^{\eps} \rightarrow u_h $ in distribution as $\eps \rightarrow 0$, it is enough to prove that $\overline{v}^{\eps} \rightarrow \overline{v}^*=u_h $ in distribution on $\mathcal{Z}$ as $\eps \rightarrow 0$---which is provided in the next theorem. 
 \begin{thm}
$\overline{v}^{\eps} \overset{D}{\rightarrow} \overline{v}^* $  in $\mathcal{Z}$.
\end{thm}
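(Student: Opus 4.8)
The plan is to deduce the convergence in law from a strong (mean–square) convergence of $\overline{v}^{\eps}$ to $\overline{v}^{*}$ in $\mathcal{Z}=C([0,T];L^{2})$. First, since $\overline{v}^{*}$ has already been shown to solve the Skeleton equation \eqref{eq:skeleton} (cf.\ \eqref{eq:weak-limit-function-cond-c1} and \eqref{equality-G}) and this equation admits a unique solution by Section~\ref{sec:uniqueness}, we have $\overline{v}^{*}=u_{h}$, $\overline{\mathbb{P}}$-a.s. By \eqref{B:reverse-poinwise} it therefore suffices to prove
\[
\overline{\mathbb{E}}\Big[\sup_{t\in[0,T]}\big\|{\tt B}(\overline{v}^{\eps}(t))-{\tt B}(\overline{v}^{*}(t))\big\|_{L^{2}}^{2}\Big]\longrightarrow 0\qquad(\eps\to0),
\]
because then $\overline{\mathbb{E}}[\sup_{t}\|\overline{v}^{\eps}(t)-\overline{v}^{*}(t)\|_{L^{2}}^{2}]\le C_{3}^{-2}\,\overline{\mathbb{E}}[\sup_{t}\|{\tt B}(\overline{v}^{\eps}(t))-{\tt B}(\overline{v}^{*}(t))\|_{L^{2}}^{2}]\to0$, i.e.\ $\overline{v}^{\eps}\to\overline{v}^{*}$ in $L^{2}(\overline{\Omega};\mathcal{Z})$, hence in $\overline{\mathbb{P}}$-probability, hence in distribution on $\mathcal{Z}$; together with $\mathcal{L}(\overline{v}^{\eps})=\mathcal{L}(v^{\eps})$ this also yields condition~\ref{C1}.

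For the displayed limit I would apply the It\^{o} formula for $x\mapsto\|x\|_{L^{2}}^{2}$ (in the Gelfand triple $W_{0}^{1,p}\hookrightarrow L^{2}\hookrightarrow W^{-1,p'}$, in the spirit of \cite[Lemma~4.1]{Pardoux1974-75}) to the process $w^{\eps}:={\tt B}(\overline{v}^{\eps})-{\tt B}(\overline{v}^{*})$, which satisfies
\[
dw^{\eps}=\Big(\mathrm{div}_{x}\big({\tt A}(\grad\overline{v}^{\eps})-{\tt A}(\grad\overline{v}^{*})\big)+\sigma(\overline{v}^{\eps})\overline{h}^{\eps}-\sigma(\overline{v}^{*})h\Big)dt+\sqrt{\eps}\,\sigma(\overline{v}^{\eps})\,d\overline{W}^{\eps}.
\]
Writing $\grad{\tt B}(\overline{v}^{\eps})-\grad{\tt B}(\overline{v}^{*})={\tt b}'(\overline{v}^{\eps})(\grad\overline{v}^{\eps}-\grad\overline{v}^{*})+({\tt b}'(\overline{v}^{\eps})-{\tt b}'(\overline{v}^{*}))\grad\overline{v}^{*}$ and invoking \eqref{A:monotonocity} together with ${\tt b}'\ge C_{3}>0$ (assumption~\ref{A3}), the contribution of $\mathrm{div}_{x}(\cdots)$ is, up to the nonnegative monotone term which is discarded, bounded by $-2\int_{0}^{t}\!\int_{D}({\tt A}(\grad\overline{v}^{\eps})-{\tt A}(\grad\overline{v}^{*}))\cdot\grad\overline{v}^{*}\,({\tt b}'(\overline{v}^{\eps})-{\tt b}'(\overline{v}^{*}))\,dx\,ds$; after taking expectation, H\"{o}lder's inequality on $\overline{\Omega}\times D_{T}$, the uniform bound on ${\tt A}(\grad\overline{v}^{\eps})$ in \eqref{esti:uniform-new-cond-c1}, and dominated convergence (along the Skorokhod subsequence $\overline{v}^{\eps}\to\overline{v}^{*}$ a.e., so ${\tt b}'(\overline{v}^{\eps})\to{\tt b}'(\overline{v}^{*})$ a.e., the integrand being dominated by $(2C_{4})^{p}|\grad\overline{v}^{*}|^{p}\in L^{1}(\overline{\Omega}\times D_{T})$), this term tends to $0$. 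The $\sigma$-drift term I would split as $2\int_{0}^{t}\langle w^{\eps},(\sigma(\overline{v}^{\eps})-\sigma(\overline{v}^{*}))\overline{h}^{\eps}\rangle ds+2\int_{0}^{t}\langle w^{\eps},\sigma(\overline{v}^{*})(\overline{h}^{\eps}-h)\rangle ds$: by \ref{A4} and \eqref{B:reverse-poinwise} the first piece is $\le C\int_{0}^{t}\|w^{\eps}(s)\|_{L^{2}}^{2}|\overline{h}^{\eps}(s)|\,ds$, which after $\sup_{t\le\tau}$, expectation and Cauchy--Schwarz in $s$ (using $\overline{h}^{\eps}\in\mathcal{A}_{M}$) yields a small multiple of $\overline{\mathbb{E}}[\sup_{s\le\tau}\|w^{\eps}(s)\|_{L^{2}}^{2}]$ plus a term $\int_{0}^{\tau}\overline{\mathbb{E}}[\sup_{s'\le s}\|w^{\eps}(s')\|_{L^{2}}^{2}]\,ds$ ready for Gronwall; the second piece — crucially \emph{without} using weak convergence of $\overline{h}^{\eps}$ — is controlled, via $\|\sigma(\overline{v}^{*})\|_{L^{2}}\le c_{\sigma}\|\overline{v}^{*}\|_{L^{2}}$ and Cauchy--Schwarz in $s$ (with $\overline{h}^{\eps},h\in S_{M}$) and in $\overline{\Omega}$, by $C\{\overline{\mathbb{E}}[\sup_{s}\|\overline{v}^{*}(s)\|_{L^{2}}^{2}]\}^{1/2}\{\overline{\mathbb{E}}[\|w^{\eps}\|_{L^{2}(D_{T})}^{2}]\}^{1/2}$, which tends to $0$ by \eqref{esti:uni-l2-limit-fun-cond-c1} and part~${\rm a)}$ of Lemma~\ref{lem:conv-limit-1-cond-c1-new}. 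Finally, the It\^{o} correction $\eps\int_{0}^{t}\|\sigma(\overline{v}^{\eps})\|_{L^{2}}^{2}ds\le C\eps\,\overline{\mathbb{E}}[\int_{0}^{T}\|{\tt B}(\overline{v}^{\eps})\|_{L^{2}}^{2}ds]\to0$ by Lemma~\ref{lem:est2.1}, and the martingale term, via the Burkh\"{o}lder--Davis--Gundy inequality, \eqref{B:reverse-L2} and Young's inequality, produces a further small multiple of $\overline{\mathbb{E}}[\sup_{s}\|w^{\eps}(s)\|_{L^{2}}^{2}]$ plus a vanishing term $C_{\delta}\,\eps\,\overline{\mathbb{E}}[\sup_{s}\|{\tt B}(\overline{v}^{\eps}(s))\|_{L^{2}}^{2}]$.

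Collecting all these bounds, choosing the free parameters small, absorbing the small multiples of $\overline{\mathbb{E}}[\sup_{t\le\tau}\|w^{\eps}(t)\|_{L^{2}}^{2}]$ into the left-hand side (finite by \eqref{esti:uniform-new-cond-c1} and \eqref{esti:uni-l2-limit-fun-cond-c1}) and applying Gronwall's lemma on $[0,T]$ will give the displayed convergence, hence the theorem. I expect the main obstacle to be the rigorous justification of the It\^{o} formula for the difference $w^{\eps}$ — both $\overline{v}^{\eps}$ and $\overline{v}^{*}$ having only the regularity $L^{p}(0,T;W_{0}^{1,p})\cap C([0,T];L^{2})$ with ${\tt A}(\grad\,\cdot\,)\in L^{p'}(0,T;W^{-1,p'})$ — together with the weak--strong passage to the limit in the ${\tt b}'$-difference term; once the uniform estimates of Lemmas~\ref{lem:est2.1}--\ref{lem:cpt2} and the strong $L^{2}(\overline{\Omega}\times D_{T})$ convergence of ${\tt B}(\overline{v}^{\eps})$ from Lemma~\ref{lem:conv-limit-1-cond-c1-new} are at hand, the remaining estimates are routine and parallel {\bf Step-IV} of Subsection~\ref{subsec:proof-cond-c2}.
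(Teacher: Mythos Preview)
Your proposal is correct and follows essentially the same route as the paper: apply the It\^{o} formula to $\|{\tt B}(\overline{v}^{\eps})-{\tt B}(\overline{v}^{*})\|_{L^{2}}^{2}$, discard the nonnegative monotone piece via the decomposition of $\nabla{\tt B}(\overline{v}^{\eps})-\nabla{\tt B}(\overline{v}^{*})$, and show the remaining drift, It\^{o}-correction and martingale contributions vanish.

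There are two minor technical variations worth noting. First, for the cross term involving $({\tt b}'(\overline{v}^{\eps})-{\tt b}'(\overline{v}^{*}))\grad\overline{v}^{*}$ you use boundedness of ${\tt A}(\grad\overline{v}^{\eps})$ in $L^{p'}$ together with strong $L^{p}$-convergence of $({\tt b}'(\overline{v}^{\eps})-{\tt b}'(\overline{v}^{*}))\grad\overline{v}^{*}\to 0$; the paper instead splits this into two pieces ($\mathcal{B}_{1},\mathcal{B}_{2}$) and exploits the already-established weak convergence ${\tt A}(\grad\overline{v}^{\eps})\rightharpoonup{\tt A}(\grad\overline{v}^{*})$ in $L^{p'}(\overline{\Omega}\times D_{T})^{d}$. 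Your variant is slightly more self-contained here. Second, for the $\sigma$-drift you set up a Gronwall argument (absorbing a small multiple of $\overline{\mathbb{E}}[\sup_{s\le\tau}\|w^{\eps}\|_{L^{2}}^{2}]$), whereas the paper bypasses Gronwall entirely by bounding the whole term, as in $\mathcal{I}_{6},\mathcal{I}_{7}$, by $C_{M}\big(\overline{\mathbb{E}}\int_{0}^{T}\|\overline{v}^{\eps}-\overline{v}^{*}\|_{L^{2}}^{2}\,dt\big)^{1/2}$, which tends to zero directly from Lemma~\ref{lem:conv-limit-2-cond-c1-new}\,${\rm i)}$. The paper's treatment is thus marginally shorter for this term; both arguments are valid.
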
 
\begin{proof}
Since convergence in probability implies convergence in distribution, by Markov inequality together with \eqref{esti:B-Lipschitz}, the theorem will be proved if we able to show that
\begin{align}
 \overline{\mathbb{E}} \Big[ \left \| {\tt B}(\overline{v}^{\eps}) - {\tt B}(\overline{v}^*) \right \|_{\mathcal{Z}} \Big] \rightarrow 0 \ \ \text{as} \ \eps \rightarrow 0. \label{conv:final-c1}
 \end{align}
 Proof of \eqref{conv:final-c1} is similar to {\bf Step-IV} of Subsection \ref{subsec:proof-cond-c2}  but its stochastic version. Applying Ito formula on the difference equation of $ {\tt B}(\overline{v}^\eps)-{\tt B}(\overline{v}^*)$ and then using integration by parts formula, the assumption \ref{A3} and \eqref{A:monotonocity} to have
 \begin{align}
&  \overline{\mathbb{E}} \Big[ \left \| {\tt B}(v^{\eps}) - \overline{v}  \right \|_{\mathcal{Z}} \Big] \notag \\
 & \leq - 2  \overline{\mathbb{E}} \left[\int_{D_T} \big(  {\tt A}(\grad \overline{v}^\eps) - {\tt A}(\grad \overline{v}^*)\big)
 \cdot   \grad \overline{v}^* {\tt b}^\prime(\overline{v}^\eps) \,dx\,dt \right]
 + 2 \overline{\mathbb{E}} \left[\ \int_{D_T} \big(  {\tt A}(\grad \overline{v}^\eps) - {\tt A}(\grad \overline{v}^*)\big)\cdot \nabla \overline{v}^*{\tt b}^\prime(\overline{v}^*)\,dx\,dt \right]\notag \\
  & \quad +  \overline{\mathbb{E}}\left[ \int_0^T \big\langle \sigma(\overline{v}^\eps) \overline{h}^\eps(t)-
 \sigma(\overline{v}^*) h(t), {\tt B}(v^{\eps}) -{\tt B}( \overline{v}^*) \big\rangle\,dt\right] + C\eps  \overline{\mathbb{E}}\left[\int_0^T \|\sigma(\overline{v}^\eps(t))\|_{L^2}^2\,dt\right] \notag \\
& \qquad + C\sqrt{ \eps} \,\overline{\mathbb{E}}\left[ \sup_{s\in [0,T]}\Big| \int_0^s \big\langle {\tt B}(v^{\eps}(r)) - {\tt B}(\overline{v}^*(r)), \sigma(\overline{v}^\eps(r))\,dW(r)\big\rangle\Big|\right]
\equiv \sum_{i=1}^5 \mathcal{B}_i\,. \label{inq:final-cond-c1}
 \end{align}
  We first consider the  term $\mathcal{B}_1$. Observe that  ${\tt b}^\prime(\overline{v}^\eps) \nabla \overline{v}^* \goto  {\tt b}^{\prime}(\overline{v}^*) \nabla \overline{v}^*$ a.e. in $\overline{\Omega} \times D_T$, and 
$|{\tt b}^\prime(\overline{v}^\eps) \nabla \overline{v}^*|^{p} \in L^1( \overline{\Omega} \times D_T)$.
 Hence an application of Lebesgue convergence theorem, along with the fact that ${\tt A}(\nabla \overline{v}^\eps) \rightharpoonup {\tt A}(\nabla \overline{v}^*) $ in $ L^{p^\prime}( \overline{\Omega} \times D_T)^d$ yields that $\mathcal{B}_1\goto 0$ as $\eps\goto 0$. 
 \vspace{0.15cm}
 
 Since ${\tt A}(\nabla \overline{v}^\eps) \rightharpoonup {\tt A}(\nabla \overline{v}^*) $ in $ L^{p^\prime}( \overline{\Omega} \times D_T)^d$, one can easily deduce that
 $\mathcal{B}_2\goto 0$.  In view of the assumption \ref{A3} and Lemma \ref{lem:est2.1}, it is easy to see that $\mathcal{B}_4 \goto 0$ as $\eps \goto 0$. 
  \vspace{0.15cm}
 
 We now focus on the term $\mathcal{B}_3$. A similar estimation  as of 
$\mathcal{I}_6$ and $\mathcal{I}_7$ gives 
\begin{align*}
\mathcal{B}_3 \le C_M \left(  \overline{\mathbb{E}}\left[ \int_0^T \|\overline{v}^\eps(t)-\overline{v}^*(t)\|_{L^2}^2\,dt\right]\right)^\frac{1}{2} \goto 0 \quad \text{as $\eps \goto 0$~~~(by ${\rm i)}$, Lemma \ref{lem:conv-limit-2-cond-c1-new})}\,.
\end{align*}
We apply Burkh$\ddot{o}$lder-Davis-Gundy inequality, the assumption \ref{A3}, Lemma \ref{lem:est2.1}, \eqref{B:reverse-L2} and 
\eqref{esti:B-Lipschitz} to obtain
\begin{align*}
\mathcal{B}_5  & \le C \sqrt{\eps}\,  \overline{\mathbb{E}}\left[ \left( \int_0^T 
 \|\overline{v}^\eps(t)-\overline{v}^*(t)\|_{L^2}^2  \|\overline{v}^\eps(t)\|_{L^2}^2\,dt \right)^\frac{1}{2}\right] \notag \\
 & \le C \sqrt{\eps}\,  \overline{\mathbb{E}}\left[  \sup_{0\le t\le T} \|\overline{v}^\eps(t)\|_{L^2} \left( \int_0^T
  \|\overline{v}^\eps(t)-\overline{v}^*(t)\|_{L^2}^2 \,dt \right)^\frac{1}{2}\right] \notag \\
  & \le  C \sqrt{\eps}\, \left\{  \overline{\mathbb{E}}\left[  \sup_{0\le t\le T} \| {\tt B}(\overline{v}^\eps(t))\|_{L^2}^2\right]\right\}^\frac{1}{2}  \left\{  \overline{\mathbb{E}}\left[ \int_0^T
  \|\overline{v}^\eps(t)-\overline{v}^*(t)\|_{L^2}^2 \,dt \right]\right\}^\frac{1}{2}\goto 0 \quad \text{as $\eps \goto 0$}\,.
\end{align*}
In view of the above  convergence results, one can pass to the limit in \eqref{inq:final-cond-c1} to arrive at  \eqref{conv:final-c1}
\end{proof}
Hence the condition \ref{C1} is satisfied by $u^\eps$. This finishes the proof of Theorem \ref{thm:main-ldp}.
%%%%%%%%%%%%%%%%%%%%%%%%%%%%%%%%%%%%%%%%%%%%%%%%%%%%%%%%%%%%%%%%%%

\section{Existence of Invariant measure}\label{sec:invariant-measure}
This section is devoted to show existence of invariant measure for the associated semigroup $(P_t)_{t\ge 0}$ for strong solution of the doubly nonlinear SPDE \eqref{eq:doubly-nonlinear}. 
\vspace{0.2cm}

We start with the observation that instead of initial condition, one may consider the doubly-nonlinear SPDE \eqref{eq:doubly-nonlinear} 
with initial distribution $\rho_0$, a Borel probability measure on $L^2$, and prove existence of martingale solution. Infact, one can establish the following lemma.

\begin{lem}\label{lem:existence-ini-distribution}
Let the assumptions \ref{A2}-\ref{A4} hold true and the initial distribution satisfies the condition
\begin{align*}
 \int_{L^2} |x|^2 \rho_0(dx) < R \quad \text{for some $R>0$}\,. 
\end{align*}
Then there exists a martingale solution $\big( \hat{\Omega}, \hat{\mathcal{F}}, \hat{\mathbb{P}}, \{\hat{\mathcal{F}}_t\},  
 \hat{W}, \hat{u}\big)$ of \eqref{eq:doubly-nonlinear}, with $\mathcal{L}(\hat{u}(0))=\rho_0$ in $L^2$, satisfying the following estimates
 \begin{equation}\label{esti:weak-ini-distribution}
\begin{aligned}
\hat{ \mathbb{E}}\Big[\sup_{0\le t\le T} \|{\tt B}(\hat{u}(t))\|_{L^2}^2 + \int_0^T \| {\tt B}(\hat{u}(t))\|_{W_0^{1,p}}^p\,{\rm d}t \Big] \le C(R,p)\,,  \\
 \hat{ \mathbb{E}}\Big[ \int_{D_T} \Big( |{\tt A}(\nabla \hat{u}(t))|^{p^\prime} + |\nabla \hat{u}(t)|^p\Big)\,{\rm d}x\,{\rm d}t\Big] \le C(R,p)\,.
\end{aligned}
\end{equation}
\end{lem}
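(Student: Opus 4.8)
The plan is to reproduce, now on a genuinely stochastic level and with a random initial datum distributed according to $\rho_0$, the semi-implicit time discretization and monotonicity machinery developed for the Skeleton equation in Section~\ref{sec:sk} and for the controlled equation in Subsection~\ref{subsec:cond-c1}. First I would work on a fixed stochastic basis $(\hat\Omega,\hat{\mathcal{F}},\hat{\mathbb{P}},\{\hat{\mathcal{F}}_t\})$ carrying a Brownian motion $\hat W$ and an independent $L^2$-valued random variable $\xi$ with $\mathcal{L}(\xi)=\rho_0$, regularize it by $\bar u_0 := \xi_\tau$, the a.s.\ unique solution of $\xi_\tau-\tau\Delta_p\xi_\tau=\xi$, so that by \cite[Lemma~30]{Vallet2019} (as in \eqref{eq:1.1}) $\tfrac12\|\xi_\tau\|_{L^2}^2+\tau\|\grad\xi_\tau\|_{L^p}^p\le\tfrac12\|\xi\|_{L^2}^2$ and $\xi_\tau\to\xi$ in $L^2$ a.s., hence $\hat{\mathbb{E}}[\|\bar u_0\|_{L^2}^2]<R$. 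On the uniform grid $t_k=k\tau$ I would then solve recursively the semi-implicit Euler--Maruyama scheme
$$ {\tt B}(\bar u_{k+1}) - {\tt B}(\bar u_k) - \tau\,{\rm div}_x{\tt A}(\grad \bar u_{k+1}) = \sigma(\bar u_k)\,\big(\hat W(t_{k+1})-\hat W(t_k)\big)\,, $$
which, exactly as in Proposition~\ref{extdis}, has a unique $W_0^{1,p}$-valued solution $\bar u_{k+1}=\mathcal{T}^{-1}$ applied to an $\hat{\mathcal{F}}_{t_{k+1}}$-measurable datum; since $\mathcal{T}^{-1}$ is continuous, induction shows $\bar u_{k+1}$ is $\hat{\mathcal{F}}_{t_{k+1}}$-measurable. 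I would then introduce the interpolants $u_\tau,\hat u_\tau,\tilde u_\tau,\tilde{\tt B}_\tau$ of Definition~\ref{defi:approximate-skeleton}.

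Next I would derive the a-priori bounds. Testing with ${\tt B}(\bar u_{k+1})$, using the identity \eqref{eq:identity-0} and the coercivity computation \eqref{esti:2}, summing over $k$, taking the supremum over $n$, then expectation, and controlling the martingale term by the Burkholder--Davis--Gundy inequality followed by Gronwall (exactly as in the proof of Lemma~\ref{lem:est2.1}) yields, with a constant depending only on $R$ and $p$ but not on $\tau$,
$$ \hat{\mathbb{E}}\Big[\sup_{1\le n\le N}\|{\tt B}(\bar u_n)\|_{L^2}^2 + \tau\sum_{k=0}^{N-1}\|\grad\bar u_{k+1}\|_{L^p}^p\Big] \le C(R,p)\,, $$
and then the $L^{p'}$ bound on ${\tt A}(\grad u_\tau)$ via \eqref{A:boundedness}. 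Squaring \eqref{esti:2}-type inequalities and reusing BDG as in Lemma~\ref{lem:est2.2} gives in addition $\hat{\mathbb{E}}[\sup_n\|{\tt B}(\bar u_n)\|_{L^2}^4+(\tau\sum\|\grad\bar u_{k+1}\|_{L^p}^p)^2]\le C$, and the fractional estimate $\sup_\tau\hat{\mathbb{E}}\big[\|\tilde{\tt B}_\tau\|_{W^{\alpha,2}([0,T];W^{-1,p'})}\big]<\infty$ for $\alpha\in(0,\tfrac1{2p})$ is obtained verbatim as in Lemma~\ref{lem:cpt2}, the stochastic-integral increment being estimated by Ito isometry.

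With these estimates I would argue tightness of the laws of $({\tt B}(u_\tau),u_{0,\tau},\hat W)$ on $L^2(D_T)\times L^2\times\mathcal{C}([0,T];\R)$: the compact embedding $L^p(0,T;W_0^{1,p})\cap W^{\alpha,2}(0,T;W^{-1,p'})\hookrightarrow\hookrightarrow L^2(D_T)$ from Lemma~\ref{lem:cpt}, together with Markov's inequality, gives tightness of $\{\mathcal{L}({\tt B}(u_\tau))\}$ on $L^2(D_T)$ (and, via Arzel\`a--Ascoli and the H\"older bound in $W^{-1,p'}$, on $\mathcal{C}([0,T];W^{-1,p'})$); tightness of $\{\mathcal{L}(u_{0,\tau})\}$ follows from the moment bound and that of $\hat W$ is trivial. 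Prokhorov together with the (Jakubowski--)Skorokhod representation theorem \cite{Jakubowski1998} (cf.\ the version in \cite[Theorem C.1]{Hausenblus-2018} used above) then provides a new basis, a Brownian motion, a subsequence, and variables converging a.s.\ in these topologies with the same laws; the limit of the stochastic integrals is identified by the standard martingale-characterization argument (the limit is a continuous square-integrable martingale with the correct quadratic variation), and the Minty-type monotonicity argument of Subsection~\ref{subsec:existence-skeleton}, in its stochastic form (Ito's formula replacing the chain rule in the limiting energy identity, and the weight ${\tt b}^\prime(\hat u_\tau)$ handled by dominated convergence), identifies ${\rm div}_x{\tt A}(\grad\hat u)$ as the weak limit of ${\rm div}_x{\tt A}(\grad u_\tau)$. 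This monotonicity step is the main obstacle, since the doubly nonlinear coupling forces one to control the cross terms involving $({\tt b}^\prime(\hat u_\tau)-{\tt b}^\prime(\hat u))\grad\hat u$ and to pass to the limit in the noise energy term under only a fourth-moment bound. Finally, \eqref{esti:weak-ini-distribution} follows from the uniform bounds above by weak lower semicontinuity of norms and Fatou's lemma, while $\mathcal{L}(\hat u(0))=\rho_0$ holds because $u_{0,\tau}\to\xi$ in $L^2$ a.s.\ with $\mathcal{L}(\xi)=\rho_0$; since apart from the appearance of the random initial law these steps are literal repetitions of arguments in \cite{Wittbold2019,Majee2023} and Sections~\ref{sec:sk}--\ref{sec:LDP}, I would only spell out the points where $\rho_0$ enters.
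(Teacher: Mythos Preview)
The paper does not actually supply a proof of this lemma: it is stated as a fact one ``can establish'' by adapting the well-posedness machinery already available for deterministic initial data (cf.~\cite{Wittbold2019,Majee2023}), and the text moves on immediately to uniqueness in law. Your proposal is precisely the natural elaboration of that implicit argument---semi-implicit time discretization with a random regularized initial datum, BDG-based a-priori bounds, tightness via Lemma~\ref{lem:cpt} and Arzel\`a--Ascoli, Skorokhod representation, and the stochastic Minty argument of \cite[Section~4]{Wittbold2019}/\cite[Subsection~3.5]{Majee2023}---so it is fully in line with what the paper intends, and there is no substantive divergence to discuss.
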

We first recall the definition of uniqueness in law of solutions of \eqref{eq:doubly-nonlinear}
\begin{defi}
We say that the problem \eqref{eq:doubly-nonlinear} has uniqueness in law property if and only if for any two martingale solutions 
 $\big(\Omega^i, \mathcal{F}^i, \mathbb{P}^i, \mathbb{F}^i, W^i, u^i\big)~(i=1,2)$  of \eqref{eq:doubly-nonlinear} with
 ${\rm Law}_{\mathbb{P}^1}(u^1(0))= {\rm Law}_{\mathbb{P}^2}(u^2(0))$ on $L^2$, one has 
\begin{align*}
{\rm Law}_{\mathbb{P}^1}(u^1)= {\rm Law}_{\mathbb{P}^2}(u^2)\quad \text{on}~~~C([0, T];L_w^2])\cap L^p([0, T];W_0^{1,p})\,,
\end{align*}
where ${\rm Law}_{\mathbb{P}^i}(u^i)$ for $i=1,2$ are probability measures on $C([0,T];L_w^2])\cap L^p([0,T];W_0^{1,p}).$
\end{defi}
Since the problem \eqref{eq:doubly-nonlinear} has a path-wise unique martingale solution, a direct application of \cite[Theorems $2~\& ~11$]{Ondrejat2004} yields that the martingale solution is unique in law. Hence we have the following lemma.
\begin{lem}\label{lem:uniqueness-law}
Let the assumptions of \ref{A2}-\ref{A4} hold. Then the martingale solution of  \eqref{eq:doubly-nonlinear}  is unique in law. 
\end{lem}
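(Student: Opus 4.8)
The plan is to obtain Lemma \ref{lem:uniqueness-law} as an application of the infinite–dimensional Yamada–Watanabe principle in the form established by Ondrej\'at \cite{Ondrejat2004}: for an SPDE of the type \eqref{eq:doubly-nonlinear} posed in a suitable separable framework, existence of a martingale solution together with pathwise uniqueness forces uniqueness in law. Both ingredients are already at hand. Existence of a martingale solution (for deterministic data, and more generally for an initial distribution $\rho_0$) is provided by \cite[Theorem $3.5$]{Wittbold2019} and by Lemma \ref{lem:existence-ini-distribution}, together with the a priori bounds \eqref{esti:weak-solun} and \eqref{esti:weak-ini-distribution}. Pathwise uniqueness is the $L^1$–contraction argument that was carried out explicitly for the skeleton equation in Subsection \ref{sec:uniqueness} and, for the stochastic problem \eqref{eq:doubly-nonlinear} itself, by the standard $L^1$–method recorded in \cite{Wittbold2019}.

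First I would fix the functional setting demanded by \cite{Ondrejat2004}: the state space is the separable Hilbert space $L^2$, the drift $\mathrm{div}_x{\tt A}(\nabla\,\cdot\,)$ takes values in $W^{-1,p'}$, the diffusion $\sigma$ maps $L^2$ into $L^2\hookrightarrow W^{-1,p'}$, and the driving noise is the one–dimensional Wiener process; the notion of solution is exactly Definition \ref{defi:weak-solun}, whose trajectories live in $C([0,T];L^2_w)\cap L^p([0,T];W_0^{1,p})$. The measurability and growth hypotheses of \cite[Theorems $2$ and $11$]{Ondrejat2004} are verified from the Carath\'eodory property and the bounds \eqref{A:monotonocity}–\eqref{A:boundedness} coming from \ref{A2}, the Lipschitz/boundedness properties of ${\tt b}'$ from \ref{A3}, and the Lipschitz condition \ref{A4}; the a priori estimates \eqref{esti:weak-solun} ensure the solutions remain in the stated path space.

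Next I would state pathwise uniqueness precisely: if $(\Omega,\mathcal F,\mathbb P,\{\mathcal F_t\},W,u^1)$ and $(\Omega,\mathcal F,\mathbb P,\{\mathcal F_t\},W,u^2)$ are two solutions on the \emph{same} stochastic basis with the \emph{same} Brownian motion and $u^1(0)=u^2(0)$ $\mathbb P$–a.s., then $u^1=u^2$ in $C([0,T];L^2)$ $\mathbb P$–a.s. This follows exactly as in Subsection \ref{sec:uniqueness}: apply the It\^o/chain rule to $\int_D\zeta_\vartheta({\tt B}(u^1)-{\tt B}(u^2))\,dx$ with the regularized absolute value $\zeta_\vartheta$, use monotonicity \eqref{A:monotonocity}, the Lipschitz bound on ${\tt b}'$ together with \eqref{B:reverse-poinwise}, and \ref{A4}; since the two stochastic integrals are driven by the same $W$ they cancel, so after letting $\vartheta\to 0$ and invoking Gronwall one gets ${\tt B}(u^1(t))={\tt B}(u^2(t))$ for a.e.\ $(t,x)$, hence $u^1=u^2$ by bijectivity of ${\tt b}$.

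Finally, combining existence of a martingale solution with pathwise uniqueness, \cite[Theorems $2$ and $11$]{Ondrejat2004} yield uniqueness in law: any two martingale solutions $(\Omega^i,\mathcal F^i,\mathbb P^i,\mathbb F^i,W^i,u^i)$, $i=1,2$, with ${\rm Law}_{\mathbb P^1}(u^1(0))={\rm Law}_{\mathbb P^2}(u^2(0))$ on $L^2$ satisfy ${\rm Law}_{\mathbb P^1}(u^1)={\rm Law}_{\mathbb P^2}(u^2)$ on $C([0,T];L^2_w)\cap L^p([0,T];W_0^{1,p})$. The main obstacle here is not an estimate but a matter of bookkeeping: one must check carefully that the solution concept of Definition \ref{defi:weak-solun}, the chosen path space, and the measurability/growth conditions align exactly with those required in \cite{Ondrejat2004}, the subtlety being that it is ${\tt B}(u)$ — not $u$ — that satisfies the evolution equation and carries the $W^{-1,p'}$–continuity, so the correspondence between "solution" here and in the cited abstract theorems has to be made explicit.
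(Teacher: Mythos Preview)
Your proposal is correct and follows exactly the paper's approach: the paper simply observes that pathwise uniqueness together with existence of a martingale solution allows a direct application of \cite[Theorems~$2$~\&~$11$]{Ondrejat2004} to conclude uniqueness in law. Your write-up is a more detailed unpacking of the same argument, including the verification of pathwise uniqueness via the $L^1$-method and the bookkeeping of the functional framework, but the route is identical.
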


\begin{rem}[Continuous dependency of martingale solution on initial data]\label{rem:cont-dependence-initial-cond} 
We make a remark regarding the continuous dependency of martingale solution of  \eqref{eq:doubly-nonlinear}  on initial data. 
 Martingale solutions of  \eqref{eq:doubly-nonlinear} continuously depends on initial data. Indeed,  let $\{u_0^n\}$ be a $L^2$-valued sequence that is convergent weakly to  $u_0$ in $L^2$. Let $(\Omega_n,
 \mathcal{F}_n, \mathbb{P}_n, \mathbb{F}_n, W_n, u_n)$ be a martingale solution of \eqref{eq:doubly-nonlinear} with initial datum $u_0^n$.  First observe that, a weak martingale solution to the problem \eqref{eq:doubly-nonlinear} with deterministic initial data $u_0\in L^2$, is also a weak martingale solution to the SPDE \eqref{eq:doubly-nonlinear} with initial law $\rho_0=\delta_{u_0}$. Since $u_0^n $ converges weakly to $u_0\in L^2$, there exists a constant $R_1>0$ such that $\sup_{n\ge 1}\|u_0^n\|_{L^2}^2\le R_1$. Hence by
  Lemma \ref{lem:existence-ini-distribution}, $u_n$ satisfies the uniform estimate \eqref{esti:weak-ini-distribution}.  Thus using the arguments of Subsection \ref{subsec:cond-c1} and \cite[Section $4$]{Wittbold2019}, one arrive at the following:  there exist 
 \begin{itemize}
 \item[i)] a subsequence $\{u_{n_j}\}_{j=1}^\infty$,
 \item[ii)] a stochastic babis $\big(\bar{\Omega}, \bar{\mathcal{F}},
\bar{\mathbb{P}}, \bar{\mathbb{F}}\big)$,
\item[iii)] real-valued Wiener processes $\bar{W}_j$ and $\bar{W}$ with $\bar{W}_j(\bar{\omega})=\bar{W}(\bar{\omega});~j\ge 1$ for $\bar{\omega}\in \bar{\Omega}$,
\item[iv)] $\mathcal{C}_T:= C([0,T]; W^{-1,p^\prime})\cap L^2_w(0,T; W^{1,2}) \cap L^2(0,T; L^2)\cap C([0,T]; L^2_w)$-valued Borel measurable random variables $\bar{u}$ and $\{\bar{u}_j\}_{j=1}^\infty$ such that 
 \end{itemize}
 \begin{align*}
 \mathcal{L}({\tt B}(u_{n_j}))=\mathcal{L}({\tt B}(\bar{u}_j))~~\text{on}~~\mathcal{Z}_T; \quad {\tt B}(\bar{u}_j)\goto {\tt B}(\bar{u})~~\text{in}~~\mathcal{C}_T,~~\bar{\mathbb{P}}\text{-a.s.},
 \end{align*}
and the tuple $\big(\bar{\Omega},
\bar{ \mathcal{F}}, \bar{\mathbb{P}}, \bar{\mathbb{F}}, \bar{W}, \bar{u}\big)$ is a martingale solution of  \eqref{eq:doubly-nonlinear} with initial distribution $\delta_{u_0}$, on the interval $[0,T]$. 
\end{rem}

Recall the family  $\{P_t\}$ as defined in \eqref{defi:semi-group} i.e., 
\begin{align*}
(P_t \phi)(v):=\mathbb{E}\big[\phi({\tt B}(u(t,v)))\big], \quad \phi \in\mathcal{B}(L^2)\,, 
\end{align*}
where $u(t,v)$ is the path-wise unique solution of   \eqref{eq:doubly-nonlinear} with fixed initial data $u_0=v\in L^2$.  Note that $P_t\phi$ is bounded.  Moreover, thanks to Lemma \ref{lem:uniqueness-law}, one can use \cite[Corollary $23$]{Ondrejat2005} to get that 
$P_t \phi \in \mathcal{B}(L^2)$ and $\{P_t\}_{t\ge 0}$ is a semigroup on $\mathcal{B}(L^2)$. Furthermore, since the unique strong solution of  \eqref{eq:doubly-nonlinear} has path a.e. in $C([0,T;L^2])$, it is also a Markov semigroup; see \cite[Theorem 27]{Ondrejat2005}.

\begin{lem}
Markov semigroup $\{P_t\}$ is a Feller semigroup i.e., $P_t$ maps $C_b(L^2)$ into itself. 
\end{lem}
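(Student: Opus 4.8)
The plan is to show that for a fixed $t\ge 0$ and any $\phi\in C_b(L^2)$ the function $v\mapsto (P_t\phi)(v)=\mathbb{E}[\phi({\tt B}(u(t,v)))]$ defined by \eqref{defi:semi-group} is bounded and continuous on $L^2$. Boundedness is immediate, since $|(P_t\phi)(v)|\le\|\phi\|_{\infty}$. For continuity it suffices, $L^2$ being a metric space, to prove sequential continuity, and for that it is enough to check that whenever $v_n\goto v$ strongly in $L^2$ every subsequence of $\{v_n\}$ admits a further subsequence along which $(P_t\phi)(v_n)\goto (P_t\phi)(v)$. So I fix such a subsequence, still denoted $\{v_n\}$; note that $v_n\rightharpoonup v$ in $L^2$ and $\sup_n\|v_n\|_{L^2}<\infty$.

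First I would invoke the continuous dependence on initial data recorded in Remark \ref{rem:cont-dependence-initial-cond}: along a further (not relabelled) subsequence there are a stochastic basis $(\overline{\Omega},\overline{\mathcal{F}},\overline{\mathbb{P}},\overline{\mathbb{F}})$, a single Wiener process $\overline{W}$, and processes $\overline{u}_n,\overline{u}$ such that $\mathcal{L}({\tt B}(\overline{u}_n))=\mathcal{L}({\tt B}(u(\cdot,v_n)))$ on the path space, ${\tt B}(\overline{u}_n)\goto{\tt B}(\overline{u})$ $\overline{\mathbb{P}}$-a.s.\ in $\mathcal{C}_T$, each $(\overline{\Omega},\overline{\mathcal{F}},\overline{\mathbb{P}},\overline{\mathbb{F}},\overline{W},\overline{u}_n)$ is a martingale solution of \eqref{eq:doubly-nonlinear} with initial law $\delta_{v_n}$ (which follows from the equality of laws and the Skorokhod construction exactly as for $\overline{v}^\eps$ in Subsection \ref{subsec:cond-c1}), and $(\overline{\Omega},\overline{\mathcal{F}},\overline{\mathbb{P}},\overline{\mathbb{F}},\overline{W},\overline{u})$ is a martingale solution with initial law $\delta_{v}$. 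From the uniform estimates \eqref{esti:weak-ini-distribution} (available since $\sup_n\|v_n\|_{L^2}<\infty$) together with the a.s.\ convergence in $\mathcal{C}_T\subset L^2(D_T)$ one obtains $\overline{u}_n\goto\overline{u}$ in $L^2(\overline{\Omega}\times D_T)$, hence $\overline{u}_n\goto\overline{u}$ a.e.\ on $\overline{\Omega}\times D_T$ along a sub-subsequence, while $\{{\tt A}(\nabla\overline{u}_n)\}$ stays bounded in $L^{p'}(\overline{\Omega}\times D_T)^d$ and $\nabla\overline{u}\in L^{p}(\overline{\Omega}\times D_T)^d$.

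The heart of the proof is to upgrade this to strong $L^2$ convergence of the time-$t$ sections. Since $\overline{u}_n$ and $\overline{u}$ solve \eqref{eq:doubly-nonlinear} driven by the \emph{same} Brownian motion $\overline{W}$, I would apply the It\^o formula to $\|{\tt B}(\overline{u}_n)-{\tt B}(\overline{u})\|_{L^2}^2$, in the spirit of {\bf Step C} of Subsection \ref{subsec:cond-c1}. Writing $\nabla{\tt B}(\overline{u}_n)-\nabla{\tt B}(\overline{u})={\tt b}^\prime(\overline{u}_n)(\nabla\overline{u}_n-\nabla\overline{u})+({\tt b}^\prime(\overline{u}_n)-{\tt b}^\prime(\overline{u}))\nabla\overline{u}$, the principal part of the drift is $-2\int_D {\tt b}^\prime(\overline{u}_n)({\tt A}(\nabla\overline{u}_n)-{\tt A}(\nabla\overline{u}))\cdot(\nabla\overline{u}_n-\nabla\overline{u})\le 0$ by \eqref{A:monotonocity} and ${\tt b}^\prime\ge C_3>0$, and is discarded; the remaining drift term is bounded, after taking expectation, by $C\,\|{\tt A}(\nabla\overline{u}_n)-{\tt A}(\nabla\overline{u})\|_{L^{p'}(\overline{\Omega}\times D_T)}\,\|({\tt b}^\prime(\overline{u}_n)-{\tt b}^\prime(\overline{u}))\nabla\overline{u}\|_{L^{p}(\overline{\Omega}\times D_T)}$, which tends to $0$ because the first factor is bounded and, by dominated convergence (using $|{\tt b}^\prime(\overline{u}_n)\nabla\overline{u}|^{p}\le C_4^{p}|\nabla\overline{u}|^{p}$ and $\overline{u}_n\goto\overline{u}$ a.e.), ${\tt b}^\prime(\overline{u}_n)\nabla\overline{u}\goto{\tt b}^\prime(\overline{u})\nabla\overline{u}$ in $L^{p}(\overline{\Omega}\times D_T)^d$; the noise contribution is $\le C\int_0^t\overline{\mathbb{E}}\|{\tt B}(\overline{u}_n(s))-{\tt B}(\overline{u}(s))\|_{L^2}^2\,ds$ by assumption \ref{A4} and \eqref{B:reverse-L2}, the stochastic integral has zero mean, and the initial discrepancy $\|{\tt B}(v_n)-{\tt B}(v)\|_{L^2}^2\le C_4^{2}\|v_n-v\|_{L^2}^2\goto 0$ by \eqref{esti:B-Lipschitz}. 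Gr\"onwall's lemma then yields $\overline{\mathbb{E}}\|{\tt B}(\overline{u}_n(t))-{\tt B}(\overline{u}(t))\|_{L^2}^2\goto 0$, so ${\tt B}(\overline{u}_n(t))\goto{\tt B}(\overline{u}(t))$ in probability on $L^2$; since $\phi\in C_b(L^2)$, dominated convergence gives $\overline{\mathbb{E}}[\phi({\tt B}(\overline{u}_n(t)))]\goto\overline{\mathbb{E}}[\phi({\tt B}(\overline{u}(t)))]$. Finally $\overline{\mathbb{E}}[\phi({\tt B}(\overline{u}_n(t)))]=\mathbb{E}[\phi({\tt B}(u(t,v_n)))]=(P_t\phi)(v_n)$ by the equality of laws, and $\overline{\mathbb{E}}[\phi({\tt B}(\overline{u}(t)))]=\mathbb{E}[\phi({\tt B}(u(t,v)))]=(P_t\phi)(v)$ by uniqueness in law (Lemma \ref{lem:uniqueness-law}), so $(P_t\phi)(v_n)\goto(P_t\phi)(v)$ along this sub-subsequence, which closes the argument.

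I expect the upgrading step to be the main obstacle: Remark \ref{rem:cont-dependence-initial-cond} only delivers convergence in $\mathcal{C}_T\subset C([0,T];L^2_w)\cap L^2(D_T)$, hence merely \emph{weak} convergence of the sections at the fixed time $t$ (or strong convergence for a.e.\ time), which is not enough against a test function continuous for the norm topology. The pathwise It\^o energy estimate — made possible by the common driving noise, the monotonicity of ${\tt A}$, and the two-sided bound on ${\tt b}^\prime$ (so that ${\tt B}$ is bi-Lipschitz and the error terms close cleanly) — is precisely what turns it into strong $L^2$ convergence at every fixed $t$. A secondary point needing care is the repeated transfer between the original and the Skorokhod probability spaces, which is handled throughout by the equality of laws and Lemma \ref{lem:uniqueness-law}.
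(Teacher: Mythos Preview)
Your argument is correct, but it takes an unnecessarily circuitous route compared with the paper. The paper stays on the \emph{original} probability space throughout: since \eqref{eq:doubly-nonlinear} admits pathwise unique \emph{strong} solutions, the processes $u(\cdot,v_n)$ and $u(\cdot,v)$ already live on $(\Omega,\mathcal{F},\mathbb{P},\{\mathcal{F}_t\})$ and are driven by the \emph{same} Brownian motion $W$. One can therefore run the stability estimate (the very It\^o energy computation you carry out, or the $L^1$-contraction from the pathwise uniqueness proof) directly on $(\Omega,\mathbb{P})$, obtaining ${\tt B}(u(t,v_n))\to{\tt B}(u(t,v))$ and hence $(P_t\phi)(v_n)\to(P_t\phi)(v)$ by continuity of $\phi$ and dominated convergence. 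No passage to a Skorokhod space, no appeal to Remark~\ref{rem:cont-dependence-initial-cond}, and no transfer via Lemma~\ref{lem:uniqueness-law} are needed.

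What your detour buys is nothing for the Feller property itself; the Skorokhod/compactness machinery of Remark~\ref{rem:cont-dependence-initial-cond} is the right tool when one has only $v_n\rightharpoonup v$ (no control on $\|{\tt B}(v_n)-{\tt B}(v)\|_{L^2}$), and indeed the paper invokes exactly that apparatus later to prove the \emph{sequentially weak} Feller property ${\rm i^\prime)}$. For the plain Feller property with $v_n\to v$ strongly in $L^2$, the initial discrepancy $\|{\tt B}(v_n)-{\tt B}(v)\|_{L^2}^2\to 0$ closes the Gr\"onwall estimate on the original space with no further work. Your proof is valid; it just proves the lemma with tools calibrated for a harder statement.
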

\begin{proof}
Let $\{v_n\}\subset L^2$ such that $v_n\goto v$ in $L^2$ and $\phi \in C_b(L^2)$. Let $u(t,v_n)$ denotes the pathwise unique solution of \eqref{eq:doubly-nonlinear} with initial condition $v_n$. Then proof of path-wise uniqueness and continuity of $\phi$ yields that for any $t\in [0,T)$
$\phi({\tt B}(u(t, v_n)))\goto \phi({\tt B}(u(t,v)))$ a.s. in $\Omega$. Since $\phi$ is bounded, by dominated convergence theorem, we have
$$\mathbb{E}\big[\phi({\tt B}(u(t, v_n)))\big]\goto \mathbb{E}\big[\phi({\tt B}(u(t, v)))\big]~~\text{ i.e.,}~~  (P_t \phi)(v_n) \goto (P_t \phi)(v).$$
 Since $\phi$ is bounded, we see that 
$|(P_t \phi)(z)|\le \|\phi\|_{\infty} < + \infty$ for all $z\in L^2$. Thus, $P_t \phi \in C_b(L^2)$. This completes the proof. 
\end{proof}
\begin{defi} \label{defi:se-weak-bounded}
A function $\phi \in SC(L_w^2)$ if and only if $\phi$ is sequentially continuous with respect to weak topology on $L^2$. By  $SC_b(L_w^2)$, we denote set of all bounded functions in $ SC(L_w^2)$. 
\end{defi}
The following inclusion holds; see \cite{Ferrario2019,Seidler1999}:
\begin{align*}
C_b(L_w^2)\subset SC_b(L_w^2)\subset C_b(L^2)\subset \mathcal{B}_b(L^2)\,.
\end{align*}
\begin{defi}
The family $\{P_t\}$ is said to be sequentially weakly Feller if and only if  $P_t$ maps  $SC_b(L_w^2)$ into itself. 
\end{defi}
To prove existence of an invariant measure, we will use the following result of Maslowski and Seidler \cite{Seidler1999}, a modification of the Krylov-Bogoliubov technique \cite{Krylov1937}. 

\begin{thm}\label{thm:M-S}
Assume that 
\begin{itemize}
\item[${\rm i^\prime)}$] $\{P_t\}$ is sequentially weakly Feller in $L^2$,
\item[${\rm ii^\prime)}$] for any $\eps>0$, there exists $R\equiv R(\eps)>0$ such that 
\begin{align*}
\sup_{T\ge 1}\frac{1}{T} \int_0^T \mathbb{P}\Big\{ \|{\tt B}(u(t,u_0))\|_{L^2}>R\Big\}\,dt < \eps.
\end{align*}
\end{itemize}
Then there exists at least one invariant measure for the strong solution of  \eqref{eq:doubly-nonlinear}.
\end{thm}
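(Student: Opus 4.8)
The plan is to adapt the classical Krylov--Bogoliubov averaging procedure to the sequentially weakly Feller setting, working throughout with the weak topology of $L^2$. Fix the deterministic initial datum $u_0\in L^2$ and, for each $T\ge 1$, introduce the time--averaged measures $\nu_T\in\mathcal{P}(L^2)$ characterised by
\[
\int_{L^2}\phi\,d\nu_T=\frac1T\int_0^T (P_t\phi)(u_0)\,dt,\qquad \phi\in\mathcal{B}_b(L^2),
\]
so that $\nu_T$ is the time average over $[0,T]$ of the laws of ${\tt B}(u(t,u_0))$. The measurability of $t\mapsto (P_t\phi)(u_0)$ needed to make sense of the right-hand side follows from the fact that, $\mathbb{P}$-a.s., the trajectory $t\mapsto u(t,u_0)$ is continuous in $L^2$, together with Fubini's theorem.

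First I would establish tightness of $\{\nu_T\}_{T\ge 1}$ with respect to the weak topology of $L^2$. Since $L^2$ is a separable Hilbert space, every closed ball $\overline{B}_R=\{x\in L^2:\|x\|_{L^2}\le R\}$ is compact and metrizable in the weak topology. By assumption ${\rm ii^\prime)}$, given $\eps>0$ there exists $R=R(\eps)>0$ with
\[
\sup_{T\ge 1}\nu_T\big(\{x:\|x\|_{L^2}>R\}\big)=\sup_{T\ge 1}\frac1T\int_0^T\mathbb{P}\big(\|{\tt B}(u(t,u_0))\|_{L^2}>R\big)\,dt<\eps,
\]
so $\{\nu_T\}$ is uniformly concentrated on the weakly compact sets $\overline{B}_R$. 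A Prokhorov-type argument on these metrizable weakly compact balls then produces a sequence $T_n\uparrow\infty$ and a probability measure $\mu\in\mathcal{P}(L^2)$ such that $\int\phi\,d\nu_{T_n}\to\int\phi\,d\mu$ for every $\phi\in SC_b(L_w^2)$.

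Next I would verify the invariance of $\mu$ tested against $SC_b(L_w^2)$. Fix $s\ge0$ and $\phi\in SC_b(L_w^2)$. By the sequentially weakly Feller property ${\rm i^\prime)}$ one has $P_s\phi\in SC_b(L_w^2)$, hence $\int P_s\phi\,d\nu_{T_n}\to\int P_s\phi\,d\mu$. On the other hand, using the semigroup identity $P_{t+s}=P_tP_s$ (a consequence of the Markov property recorded after \eqref{defi:semi-group}),
\[
\int P_s\phi\,d\nu_{T_n}=\frac1{T_n}\int_0^{T_n}(P_{t+s}\phi)(u_0)\,dt=\frac1{T_n}\int_0^{T_n}(P_t\phi)(u_0)\,dt+\frac1{T_n}\Big(\int_{T_n}^{T_n+s}-\int_0^{s}\Big)(P_t\phi)(u_0)\,dt,
\]
and the last bracket is bounded in absolute value by $2s\|\phi\|_\infty/T_n\to0$ since $\phi$ is bounded. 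Therefore $\int P_s\phi\,d\mu=\lim_n\int\phi\,d\nu_{T_n}=\int\phi\,d\mu$, which is exactly the asserted invariance; in particular at least one invariant measure for the semigroup $\{P_t\}$ exists.

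The main obstacle is that $(L^2,w)$ is not a Polish space globally, so both the Prokhorov-type extraction of $\mu$ and the very notion of convergence $\nu_{T_n}\to\mu$ must be set up carefully on the metrizable weakly compact balls $\overline{B}_R$ and then pieced together — this is precisely why the sequentially weakly Feller hypothesis (rather than ordinary Feller continuity in the strong topology) is the natural one here, and constitutes the technical heart of the Maslowski--Seidler argument. A secondary technical point is the justification of the semigroup identity $P_{t+s}=P_tP_s$ and of the measurability required for the time averages, which rest on the path-wise uniqueness and uniqueness in law of Lemmas \ref{lem:existence-ini-distribution}--\ref{lem:uniqueness-law} together with the continuous dependence on the initial data discussed in Remark \ref{rem:cont-dependence-initial-cond}.
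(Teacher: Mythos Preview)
The paper does not prove Theorem~\ref{thm:M-S}; it is quoted directly from Maslowski and Seidler~\cite{Seidler1999} as a known tool, and the paper's own work consists only in verifying the two hypotheses ${\rm i^\prime)}$ and ${\rm ii^\prime)}$ for the particular semigroup~\eqref{defi:semi-group}. Your proposal, by contrast, supplies an actual proof of the abstract Maslowski--Seidler result, and the argument you outline---time-averaged measures $\nu_T$, weak-topology tightness via the weakly compact balls $\overline{B}_R$, Prokhorov-type extraction on these metrizable compacta, and invariance from the semigroup identity combined with $P_s\phi\in SC_b(L_w^2)$---is exactly the Krylov--Bogoliubov adaptation that underlies~\cite{Seidler1999}, so your reconstruction is correct and faithful to the original source.

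One small caveat: you invoke the continuous dependence of Remark~\ref{rem:cont-dependence-initial-cond} and Lemmas~\ref{lem:existence-ini-distribution}--\ref{lem:uniqueness-law} to justify the Markov and semigroup properties, but in the paper's logic these are established \emph{before} Theorem~\ref{thm:M-S} is stated precisely so that the abstract result can be applied; they are inputs to the verification of ${\rm i^\prime)}$, not part of the proof of the abstract theorem itself. The abstract statement only assumes a Markov semigroup satisfying ${\rm i^\prime)}$ and ${\rm ii^\prime)}$, and your core argument uses nothing more.
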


\subsection{Proof of Theorem \ref{thm:existence-invariant-measure}}
To prove Theorem \ref{thm:existence-invariant-measure}, we show two conditions of Theorem \ref{thm:M-S} under the assumptions 
\ref{A2}-\ref{A4} and \eqref{cond:extra-sigma}. 
\vspace{0.2cm}

\noindent{ \bf Proof of ${\rm i^\prime)}$}. Let $0<t\le T$, $v\in L^2$ and $\phi\in SC_b(L_w^2)$ be fixed. Let $\{v_n\}$ be a $L^2$-valued sequence such that $v_n$ converges $v$ weakly in $L^2$. Let $u_n$ be a strong solution of  \eqref{eq:doubly-nonlinear}, defined on the stochastic basis $(\Omega, \mathcal{F}, \mathbb{F}, \mathbb{P})$, on $[0,T]$ with initial datum $v_n$.  Then by Remark \ref{rem:cont-dependence-initial-cond}, there exist  a subsequence $\{u_{n_j}\}_{j=1}^\infty$,  a stochastic babis $\big(\bar{\Omega}, \bar{\mathcal{F}},
\bar{\mathbb{P}}, \bar{\mathbb{F}}\big)$, real-valued Wiener processes $\bar{W}_j$ and $\bar{W}$ with $\bar{W}_j(\bar{\omega})=\bar{W}(\bar{\omega});~j\ge 1$ for  all $\bar{\omega}\in \bar{\Omega}$, and $\mathcal{C}_T$-valued Borel measurable random variables $\bar{u}$ and $\{\bar{u}_j\}_{j=1}^\infty$ such that 
 \begin{align}
 \mathcal{L}({\tt B}(u_{n_j}))=\mathcal{L}({\tt B}(\bar{u}_j))~~\text{on}~~\mathcal{C}_T; \quad {\tt B}(\bar{u}_j)\goto {\tt B}(\bar{u})~~\text{in}~~\mathcal{C}_T,~~\bar{\mathbb{P}}\text{-a.s.}. \label{con-con-con}
 \end{align}
Moreover, the tuple $\big(\bar{\Omega},
\bar{ \mathcal{F}}, \bar{\mathbb{P}}, \bar{\mathbb{F}}, \bar{W}, \bar{u}\big)$ is a martingale solution of   \eqref{eq:doubly-nonlinear} with initial data $u_0$, on the interval $[0,T]$. Since $\phi\in SC_b(L_w^2)$, by \eqref{con-con-con}, $\bar{\mathbb{P}}$-a.s., $\phi({\tt B}(\bar{u}_j(t))) \goto \phi({\tt B}(\bar{u}(t)))$ for any fixed $t$. Moreover, by dominated convergence theorem
\begin{align}
\lim_{j\goto \infty} \bar{\mathbb{E}}\big[ \phi({\tt B}(\bar{u}_j(t))) \big]=  \bar{\mathbb{E}}\big[ \phi({\tt B}(\bar{u}(t))) \big]\,. \label{con-con-con-1}
\end{align}
Since $\mathcal{L}({\tt B}(\bar{u}_j))= \mathcal{L}({\tt B}(u_{n_j}))$ on $\mathcal{C}_T$, one has $\mathcal{L}({\tt B}(\bar{u}_j))= \mathcal{L}({\tt B}(u_{n_j}))$ on $L_w^2$ and therefore
\begin{align}
 \bar{\mathbb{E}}\big[ \phi({\tt B}(\bar{u}_j(t))) \big]=  \mathbb{E}\big[ \phi({\tt B}(\bar{u}_{n_j}(t))) \big]:= (P_t\phi)(v_{n_j})\,. \label{con-con-con-2}
\end{align}
Let $u$ be a strong solution of   \eqref{eq:doubly-nonlinear} with initial data $u_0$. Since $\bar{u}$ is also a martingale solution of   \eqref{eq:doubly-nonlinear} with initial data $u_0$, by  Lemma \ref{lem:uniqueness-law}, we see that $\mathcal{L}(u)=\mathcal{L}(\bar{u})$ on $\mathcal{C}_T$. Since ${\tt B}: L^2 \goto L^2$ is Lipschitz continuous, one has 
\begin{align}
(P_t \phi)(v):= \mathbb{E}\big[ \phi({\tt B}(u(t)))\big]= \bar{\mathbb{E}}\big[ \phi({\tt B}(\bar{u}(t))) \big]\,. \label{con-con-con-3}
\end{align}
In view of \eqref{con-con-con-1}, \eqref{con-con-con-2} and \eqref{con-con-con-3}, we get
\begin{align*}
\lim_{j\goto \infty} (P_t\phi)(v_{n_j})=\lim_{j\goto \infty} \bar{\mathbb{E}}\big[ \phi({\tt B}(\bar{u}_j(t))) \big]=  \bar{\mathbb{E}}\big[ \phi({\tt B}(\bar{u}(t))) \big]=\mathbb{E}\big[ \phi({\tt B}(u(t)))\big]=(P_t \phi)(v)\,.
\end{align*}
Moreover, by uniqueness, the whole sequence converges. This completes the proof of the assertion ${\rm i^\prime)}$. 
\vspace{.2cm}

\noindent{ \bf Proof of ${\rm ii^\prime)}$}. An application of It\^{o}-formula to $x\mapsto \frac{1}{2}\|x\|_{L^2}^2$ on ${\tt B}(u)$ and  integration by parts formula yields, after taking expectation
\begin{align*}
\frac{1}{2} \mathbb{E}\big[\|{\tt B}(u(t))\|_{L^2}^2\big] - \frac{1}{2}\|{\tt B}(u_0)\|_{L^2}^2 + \mathbb{E}\Big[ \int_0^t \big\langle {\tt A}(\nabla u(s)), \nabla {\tt B}(u(s))\big\rangle\,ds\Big] = \frac{1}{2} \mathbb{E}\Big[ \int_0^t \|\sigma(u(s))\|_{L^2}^2\,ds\Big]\,.
\end{align*}
In view of the assumption ${\rm ii)}$, \ref{A2} and the boundedness property of ${\tt b}^\prime$, we have 
\begin{align*}
C_1C_3 \mathbb{E}\Big[\int_0^t \|\nabla u(s)\|_{L^p}^p\,ds \Big] 
\le  \mathbb{E}\Big[ \int_0^t \big\langle {\tt A}( \nabla u(s)), \nabla {\tt B}(u(s)) \big\rangle\,ds\Big] + C_4 t \|K_1\|_{L^1}\,.
\end{align*}
Thus, we obtain
\begin{align*}
\mathbb{E}\big[\|{\tt B}(u(t))\|_{L^2}^2\big] + \mathbb{E}\Big[\int_0^t \big( 2C_1C_3 \|\nabla u(s)\|_{L^p}^p- \|\sigma(u(s))\|_{L^2}^2\big)\,ds \Big]
\le  2t C_4 \|K_1\|_{L^1} + \|{\tt B}(u_0)\|_{L^2}^2 \,.
\end{align*}
Thanks to  the condition \eqref{cond:extra-sigma}, we have,  for any $T>0$
\begin{align*}
\frac{1}{T} \int_0^T \mathbb{E}\big[\|u(t)\|_{L^2}^p\big]\,ds \le \frac{1}{\delta} \Big( 2\|K_1\|_{L^1} + \frac{1}{T}\|u_0\|_{L^2}^2\Big)\,.
\end{align*}
By Markov inequality, we get, for any $T>0$ and $R>0$
\begin{align*}
& \frac{1}{T}\int_0^T \mathbb{P}\big\{ \|u(t)\|_{L^2} >R\big\}\,dt \le \frac{1}{R^p}\, \frac{1}{T}\int_0^T \mathbb{E}\big[ \| u(t)\|_{L^2}^p\big]\,dt \notag \\
& \le  \frac{1}{ R^p \delta} \Big( 2\|K_1\|_{L^1} + \frac{1}{T}\|u_0\|_{L^2}^2\Big)\,.
\end{align*}
To complete the proof of Theorem \ref{thm:existence-invariant-measure}, one needs to take  sufficiently large $R\equiv R(\eps, \|u_0\|_{L^2}, \|K_1\|_{L^1}, \delta) >0$. 
%%%%%%%%%%%%%%%%%%%%%%%%%%%%%%%%%%%%%%%%%%%%%%%%%%%%%%%%%%%%%%%
%%%%%%%%%%%%%%%%%%%%%%%%%%%%%%%%%%%%%%%%%%%%%%%%%%%%%%%%%%%%%%%%%%%%%%%%%%%%

%%%%%%%%%%%%%%%%%%%%%%%%%%%%%%%%%%%%%%%%%%%%%%%%%%%%%%%%%%%%

\subsection{Declarations} The author would like to make the following declaration statements.

\begin{itemize}
\item {\bf Funding:} The author would like to acknowledge the financial support by Department of Science and Technology, Govt. of India-the INSPIRE fellowship~(IFA18-MA119).
\item {\bf Data availability:}  Data sharing is not applicable to this article as no datasets were generated or analyzed during the current study.
\item{\bf Conflict of interest:} The author has not disclosed any competing interests.

\end{itemize}

 \vspace{1cm}

\end{document}